\DeclareMathOperator{\conv}{conv}
\DeclareMathOperator{\aff}{aff}
\DeclareMathOperator{\ehr}{ehr}
\DeclareMathOperator{\Ehr}{Ehr}
\DeclareMathOperator{\SSYT}{SSYT}
\newcommand{\Newt}{\mathrm{Newt}}
\newcommand{\polytope}{\mathcal{P}}
\newcommand{\Def}[1]{\textbf{#1}}
\newcommand{\R}{\mathbb{R}}
\newcommand{\Z}{\mathbb{Z}}
\newcommand{\Q}{\mathbb{Q}}
\def\a{\mathbf{a}}
\def\p{\mathbf{p}}
\def\u{\mathbf{u}}
\def\bv{\mathbf{v}}
\def\x{\mathbf{x}}
\def\y{\mathbf{y}}
\definecolor{green}{RGB}{34, 139, 34}
\definecolor{darkpink}{RGB}{245,75,200}
\definecolor{uw_purple}{RGB}{95,45,156} 
\title{Lattice polytopes from Schur and symmetric Grothendieck polynomials}
\author{Margaret Bayer\thanks{Partially supported by University of Kansas 
General Research Fund.}\\
\small Department of Mathematics \\[-0.8ex] 
\small University of Kansas \\[-0.8ex]
\small Lawrence, Kansas, U.S.A.  \\ 
\small\tt bayer@ku.edu \\ 
\and 
Bennet Goeckner\thanks{Partially supported by AMS-Simons Travel Grant.} \\ 
\small Department of Mathematics \\ [-0.8ex]
\small University of Washington \\[-0.8ex]
\small Seattle, Washington, U.S.A. \\ 
\small\tt goeckner@uw.edu\\
\and 
Su Ji Hong \\ 
\small Department of Mathematics \\ [-0.8ex]
\small University of Nebraska--Lincoln \\[-0.8ex]
\small Lincoln, Nebraska, U.S.A. \\ 
\small\tt sujihong@huskers.unl.edu 
\and 
Tyrrell McAllister \\ 
\small Department of Mathematics and Statistics \\[-0.8ex] 
\small University of Wyoming \\[-0.8ex]
\small  Laramie, Wyoming, U.S.A.  \\ 
\small\tt tmcallis@uwyo.edu\\
\and
McCabe Olsen \\
\small Department of Mathematics \\ [-0.8ex]
\small Rose-Hulman Institute of Technology \\[-0.8ex] 
\small Terre Haute, Indiana, U.S.A.  \\ 
\small\tt olsen@rose-hulman.edu \\
\and 
Casey Pinckney \\ 
\small Department of Mathematics \\ [-0.8ex]
\small Colorado State University \\[-0.8ex]
\small Fort Collins, Colorado, U.S.A. \\ 
\small\tt pinckney@math.colostate.edu\\
  \and 
Julianne Vega \\ 
\small Department of Mathematics \\ [-0.8ex]
\small Kennesaw State University \\[-0.8ex]
\small Kennesaw, Georgia, U.S.A. \\ 
\small\tt jvega30@kennesaw.edu\\ 
\and 
Martha Yip\thanks{Partially supported by Simons Collaboration Grant.} \\
\small Department of Mathematics \\ [-0.8ex]
\small University of Kentucky \\ [-0.8ex]
\small Lexington, Kentucky, U.S.A. \\ 
\small\tt martha.yip@uky.edu}
\date{\today}
\begin{document}
\maketitle


\begin{abstract}

Given a family of lattice polytopes, two common questions in Ehrhart Theory are determining when a polytope has the integer decomposition property and 
determining when a polytope is reflexive.  
While these properties are of independent interest, the confluence of these properties is a source of active investigation due to conjectures regarding the unimodality of the $h^\ast$-polynomial.
In this paper, we consider the Newton polytopes arising from two families of polynomials in algebraic combinatorics: Schur polynomials and inflated symmetric Grothendieck polynomials. 
In both cases, we prove that these polytopes have the integer decomposition property by using the fact that both families of polynomials have saturated Newton polytope. 
Furthermore, in both cases, we provide a complete characterization of when these polytopes are reflexive. We conclude with some explicit formulas and unimodality implications of the $h^\ast$-vector in the case of Schur polynomials.

\end{abstract}


\section{Introduction}

Of central interest in
algebraic combinatorics are polynomials $f \in \mathbb{C}[x_1,x_2,\dots, x_m]$, which commonly appear as generating functions that encode some combinatorial information.
Associated to each polynomial $f$ is the Newton polytope $\Newt(f)$, which is the convex hull of the exponent vectors occurring in the monomials in $f$.
A polynomial has \Def{saturated Newton polytope} if every lattice point appearing in the Newton polytope corresponds to the exponent vector of a monomial in $f$ with nonzero coefficient~\cite{Monical-Tokcan-Yong}.

If a polynomial $f$ has saturated Newton polytope, then
checking if a monomial has nonzero coefficient is equivalent to checking if the corresponding integer lattice point is in the Newton polytope.
Adve, Robichaux and Yong \cite{Adve-Robichaux-Yong} use this perspective to
study the computational complexity of the ``nonvanishing problem'' for
polynomials, with a focus on Schubert polynomials.

In this paper, we study Newton polytopes arising from Schur polynomials and a generalization of symmetric Grothendieck polynomials, which we call inflated symmetric Grothendieck polynomials.  
We denote these polytopes by $\Newt(s_\lambda)$ and $\Newt(G_{h,\lambda})$, respectively.
The polytopes $\Newt(s_\lambda)$ and $\Newt(G_{1,\lambda})$ have previously been studied by Monical, Tokcan, and Yong~\cite{Monical-Tokcan-Yong} and Escobar and Yong~\cite{Escobar-Yong}, but many open questions remain. 
We are particularly interested in determining when $\Newt(s_\lambda)$ and $\Newt(G_{h,\lambda})$ are reflexive and when they have the integer decomposition property (IDP), both of which we define in Section~\ref{Section:Background}.

The Ehrhart series of a lattice polytope $\mathcal{P}$ is a combinatorial tool that enumerates the lattice points in dilations of $\mathcal{P}$.
The $h^\ast$-vector of $\mathcal{P}$, denoted $h^\ast(\mathcal{P})$, records the coefficients in the numerator of the rational function representing the Ehrhart series. 
Understanding the $h^\ast$-vectors of reflexive polytopes has been a topic of extensive recent research \cite{Braun-Unimodality-Survey}. Hibi showed that reflexive polytopes have palindromic $h^\ast$-vectors \cite{Hibi--Palindromic}.

A lattice polytope $\mathcal{P}$ is \Def{Gorenstein} if some positive integer dilate of $\mathcal{P}$ is reflexive, and hence the Gorenstein property
is a relaxation of reflexivity.
The following conjecture is commonly attributed to Ohsugi and Hibi \cite{Ohsugi-Hibi} in the modern literature, though it is a special case of conjectures of Brenti \cite{Brenti--LogConcave} and of Stanley \cite{Stanley--LogConcave}.

\begin{conjecture}[Ohsugi--Hibi \cite{Ohsugi-Hibi}] \label{conj-unimodal} If $\mathcal{P}$ is a Gorenstein polytope that has the integer decomposition property, then $h^\ast(\mathcal{P})$ is unimodal.
\end{conjecture}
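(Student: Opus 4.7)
The plan is to translate the conjecture into the language of graded algebras, where unimodality of the Hilbert series follows from a Lefschetz-type structural result. Given a $d$-dimensional lattice polytope $\polytope$ with the integer decomposition property, the Ehrhart semigroup algebra
\[
R_\polytope \;=\; \bigoplus_{k \geq 0} \bigoplus_{\a \in k\polytope \cap \Z^d} \mathbb{C}\cdot \chi^{(\a,k)}
\]
is a standard graded $\mathbb{C}$-algebra of Krull dimension $d+1$ whose Hilbert series equals the Ehrhart series of $\polytope$. The Gorenstein hypothesis makes $R_\polytope$ a Gorenstein algebra, while IDP guarantees generation in degree one, so that the numerator of the Hilbert series is exactly $h^\ast(\polytope)$.

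The standard approach is to reduce unimodality to the weak Lefschetz property for some artinian reduction $A = R_\polytope/(\theta_1,\ldots,\theta_{d+1})$: one seeks a linear form $\ell \in A$ such that multiplication $\ell \colon A_i \to A_{i+1}$ has maximal rank for every $i$ below the middle degree. The Gorenstein property promotes $A$ to a Poincar\'e duality algebra (which reproves the already-known palindromicity of $h^\ast(\polytope)$), and weak Lefschetz combined with palindromicity yields unimodality of the full $h^\ast$-sequence.

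The main obstacle, and the reason the conjecture remains open despite decades of work, is producing such a Lefschetz element in general; no canonical choice is known to succeed for arbitrary IDP Gorenstein polytopes. A plausible strategy would exploit a regular unimodular triangulation of $\polytope$ (whose existence is itself conjectural under IDP but known in many cases) to degenerate $R_\polytope$ into a Stanley--Reisner ring and import Adiprasito-style Hodge-theoretic techniques to extract a Lefschetz element from a generic linear form. Alternatively, a purely combinatorial proof of $h^\ast_i \leq h^\ast_{i+1}$ for $i$ below the middle degree, via an explicit injection on lattice points in the dilates, would bypass the algebraic machinery entirely; no such injection is currently known even conjecturally. Partial results for special classes (order polytopes of Gorenstein posets, certain reflexive simplices, polytopes from root systems) have relied on additional structure of this kind, suggesting that any complete proof will require a substantial new idea rather than a direct extension of the existing techniques.
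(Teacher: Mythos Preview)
The statement you were asked to address is labeled a \emph{Conjecture} in the paper, and the paper makes no attempt to prove it; it is cited as an open problem motivating the paper's study of IDP and reflexivity for specific families of Newton polytopes. Your write-up is not a proof either: you correctly set up the standard translation to the Ehrhart ring $R_\polytope$, note that Gorenstein plus IDP makes it a standard graded Gorenstein algebra, and identify the weak Lefschetz route, but you then explicitly state that ``the conjecture remains open'' and that producing a Lefschetz element in general is the unresolved obstacle. What you have written is an accurate survey of the landscape (Lefschetz via regular unimodular triangulations, Adiprasito-style Hodge techniques, hypothetical combinatorial injections), not a proof.

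So there is no discrepancy with the paper to analyze: the paper offers no proof, and neither do you. If the task was genuinely to prove the statement, the honest conclusion is that it is an open conjecture of Ohsugi and Hibi; your discussion correctly explains why the natural algebraic approach stalls, but none of the strategies you sketch is carried through, nor could they be without new ideas.
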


A prominent open question in Ehrhart Theory is whether the $h^\ast$-vector of every lattice polytope with IDP is unimodal \cite{Schepers-VanLangenhoven};
this is related to a conjecture of Stanley's on the unimodality of $h$-vectors
of Cohen-Macaulay and Gorenstein domains~\cite{Stanley-Hilbert-Functions-CM}.
Schepers and Van Langenhoven~\cite{Schepers-VanLangenhoven} show that lattice parallelepipeds, which are among the simplest examples of polytopes that have IDP, also have unimodal $h^\ast$-vectors.
We show $\Newt(s_\lambda)$ and $\Newt(G_{h,\lambda})$ have IDP and then consider unimodality of the $h^\ast$-vector of $\Newt(s_\lambda)$. 

Our main contributions in this paper are as follows. We show that all Newton polytopes that arise from Schur polynomials have IDP, and characterize which of these are reflexive. 
We present closed-form expressions for the $h^\ast$-vectors of those that are reflexive and show that these vectors are all unimodal. 
This is a family for which Conjecture \ref{conj-unimodal} holds. 
Further, we consider symmetric Grothendieck polynomials, which are linear combinations of Schur polynomials and can be thought of as their inhomogeneous analogue. We show that all Newton polytopes arising from inflated symmetric Grothendieck polynomials have IDP, and characterize the very few that are reflexive.

\noindent \textbf{Acknowledgments:} The authors would like to thank Federico Castillo and Semin Yoo for helpful discussions and contributions to the early stages of the project.
We would also like to thank Michel Marcus and Avery St.\ Dizier for helpful 
comments on a previous version of the paper.

This work was completed in part at the 2019 Graduate Research Workshop in Combinatorics, which was supported in part by NSF grant \#1923238, NSA grant \#H98230-18-1-0017, a generous award from the Combinatorics Foundation, and Simons Foundation Collaboration Grants \#426971 (to M.~Ferrara) and \#315347 (to J.~Martin). 

\section{Background} \label{Section:Background}
In this section, we briefly recall notions from convex geometry, Ehrhart theory, and the study of Newton polytopes in algebraic combinatorics.

\subsection{Convex polytopes and Ehrhart theory}\label{Ehrhart}
A \Def{polytope} $\polytope\subset \R^m$ is the convex hull of finitely many points $\bv_1,\ldots,\bv_k\in\R^m$. That is,
	\[
	\polytope= \conv\{\bv_1,\ldots,\bv_k\}\coloneqq 
	\left\{ \x=\sum_{i=1}^k \nu_i \bv_i \ \bigg| \ 0\leq \nu_i\leq 1 \mbox{ and } \sum_{i=1}^k\nu_i=1\right\}.
	\] 
The inclusion-minimal set $V\subseteq \R^m$ such that $\polytope=\conv(V)$ is called the \Def{vertex set} of $\polytope$. 
A polytope is called \Def{lattice} (resp. \Def{rational}) if $\polytope=\conv(V)$ for $V\subseteq \Z^m$ (resp. $V\subseteq \Q^m$). 
Given a polytope $\polytope\subseteq\R^m$, the classical Minkowski--Weyl theorem states that we can express $\polytope$ as a bounded set of the  form
	\[
	\polytope=\{\x\in\R^m \mid \langle \a_i,\x \rangle \leq b_i \mbox{ for } i=1,\ldots,\ell\}
	\]
where $\langle \a_i,\x \rangle = \sum_{j=1}^m a_{ij}x_j$, for some $\a_1,\ldots,\a_\ell\in\R^m$ and $b_1,\ldots,b_\ell\in \R$.
If none of these constraints are redundant, each constraint defines, by 
equality, a \Def{facet} (i.e., codimension 1 face) of $\polytope$.
The \Def{dimension} of $\polytope$, denoted $\dim(\polytope)$, is defined to be the dimension of its affine span in $\R^m$.

Let $\polytope$ be a lattice polytope with $\dim(\polytope)=d\leq m$.
Given a positive integer $t$, let $t\polytope\coloneqq \{t\x \mid \x \in \polytope\}$ be the $t$-th dilate of $\polytope$.
The lattice point enumeration function
	\[
	\ehr_\polytope(t)\coloneqq \#(t\polytope\cap \Z^m)
	\] 
is called the \Def{Ehrhart polynomial} of $\polytope$.
By a classical result of Ehrhart \cite{Ehrhart}, this function agrees with a polynomial of degree $d$ in the variable $t$. 
Equivalently, one may also consider the \Def{Ehrhart series} of $\polytope$ which is defined to be the formal power series
	\[
	\Ehr_\polytope(z)\coloneqq 1+ \sum_{t\geq 1}\ehr_\polytope(t)z^t=\frac{1+h_1^\ast z+\cdots +h_{d-1}^\ast z^{d-1}+h_d^\ast z^d}{(1-z)^{d+1}}.
	\]
The numerator of the Ehrhart series is called the \Def{$h^\ast$-polynomial} and the vector of coefficients $h^\ast(\polytope)=(1,h^\ast_1,\ldots, h^\ast_d)$ the \Def{$h^\ast$-vector}.
By a result of Stanley \cite{Stanley-Hilbert-Functions-CM}, $(1,h^\ast_1,\ldots, h^\ast_d)\in \Z_{\geq 0}^{d+1}$. 
Studying $h^\ast(\polytope)$ often informs the algebraic and geometric structure of a lattice polytope $\polytope$. 

If $\mathbf{0}$ is in the interior of $\polytope\subset \R^m$, 
the \Def{(polar) dual polytope} of $\polytope$ is the 
polytope
	\[
	\polytope^\ast\coloneqq \left\{\y\in\R^m \mid \langle\y,\x\rangle\leq 1 \mbox{ for all } \x\in \polytope \right\}.
	\]
A polytope $\polytope$ with $\mathbf{0}$ in its interior is called 
\Def{reflexive} if $\polytope^\ast$ is a lattice polytope. 

Hibi (\cite{Hibi--Palindromic}) showed that $\polytope\subset \R^m$ with $\mathbf{0}$ in its interior is reflexive
if and only if for any facet $\polytope\cap\{\x\in\R^m \mid \langle\a,\x\rangle= b\}$ 
where $\a$ is primitive (meaning the greatest common divisor of the coordinates 
of $\a$ is 1) and $b>0$, we have $b = 1$. 
In this case, there are no lattice points between the hyperplane spanned by the
facet and its translation through $\mathbf{0}$, and
we say that $\mathbf{0}$ is \Def{lattice distance} 1 from the facet.

We will use the above characterization to extend the notion of reflexivity
to polytopes that are not full-dimensional or
for which the polytope has a nonzero lattice point in the relative interior.  
We say that a $k$-dimensional polytope $\polytope$ with a point $\p$ in its 
relative interior is \Def{reflexive} if there is a lattice-preserving 
linear transformation and translation by $-\p$ that takes $\polytope$
to a reflexive polytope in $\R^k$. 
This can be tested by checking that the lattice point $\p$ is lattice
distance 1 from all the facets of $\polytope$, that is, that there are no 
lattice
points in $\aff(\polytope)$ between the span of the facet and its translation
containing $\p$.  

\begin{theorem}[Hibi {\cite{Hibi--Palindromic}}]
Let $\mathcal{P}$ be a lattice polytope of dimension $d$ containing the origin in its interior and having Ehrhart series
\[
\textnormal{Ehr}_{\mathcal{P}}(z) = \frac{h_0^{\ast}+h_1^\ast z+\cdots +h_{d-1}^\ast z^{d-1}+h_d^\ast z^d}{(1-z)^{d+1}}.
\]
Then $\mathcal{P}$ is reflexive if and only if $h_i^{\ast}=h_{d-i}^{\ast}$ for all $0\leq i \leq \lfloor\frac{d}{2}\rfloor$.
\end{theorem}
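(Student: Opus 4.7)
The plan is to leverage Ehrhart--Macdonald reciprocity to convert the palindromicity of the $h^\ast$-vector into a geometric statement about lattice points in successive dilates of $\mathcal{P}$, then match that statement against the facet description of reflexivity given earlier in this section.

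First, I would carry out a short generating-function computation. Ehrhart--Macdonald reciprocity gives the rational-function identity $\mathrm{Ehr}_{\mathcal{P}^\circ}(z) = (-1)^{d+1}\mathrm{Ehr}_\mathcal{P}(1/z)$. Substituting the stated rational form of $\mathrm{Ehr}_\mathcal{P}(z)$ and clearing, one sees that $h_i^\ast = h_{d-i}^\ast$ is equivalent to $z^d h^\ast(1/z) = h^\ast(z)$, and this in turn is equivalent to
$$\mathrm{Ehr}_{\mathcal{P}^\circ}(z) \;=\; z \cdot \mathrm{Ehr}_{\mathcal{P}}(z).$$
Extracting the coefficient of $z^{t+1}$ shows this is the same as
$\mathrm{ehr}_{\mathcal{P}^\circ}(t+1) = \mathrm{ehr}_{\mathcal{P}}(t)$ for every $t \ge 0$. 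Next, since $\mathbf{0}$ lies in the interior of $\mathcal{P}$, scaling about $\mathbf{0}$ by a factor strictly less than $1$ lands inside $\mathcal{P}^\circ$; writing $t\mathcal{P} = \tfrac{t}{t+1}(t+1)\mathcal{P}$ gives the containment $t\mathcal{P} \subseteq (t+1)\mathcal{P}^\circ$, so the equality of lattice point counts promotes to the set equality
$$t\mathcal{P} \cap \mathbb{Z}^m \;=\; (t+1)\mathcal{P}^\circ \cap \mathbb{Z}^m \qquad \text{for all } t \ge 0. \qquad (\ast)$$

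The remaining work is to show that $(\ast)$ is equivalent to reflexivity. Write $\mathcal{P} = \{\x : \langle \a_i, \x\rangle \le b_i\}$ with primitive integer normals $\a_i$ and $b_i \in \mathbb{Z}_{>0}$ (positivity from $\mathbf{0} \in \mathcal{P}^\circ$). If all $b_i = 1$, then for a lattice point $\x$ the strict inequality $\langle \a_i, \x\rangle < t+1$ is equivalent, by integrality, to $\langle \a_i, \x\rangle \le t$, and $(\ast)$ follows at once. Conversely, assuming some $b_j \ge 2$, the goal is to produce a lattice point lying in $(t+1)\mathcal{P}^\circ$ but not in $t\mathcal{P}$, contradicting $(\ast)$. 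Primitivity of $\a_j$ supplies a lattice point $\u$ with $\langle \a_j, \u\rangle = 1$, and a lattice point $\bv$ on the facet $F_j$ satisfies $\langle \a_j, \bv\rangle = b_j$; for $t$ large, the point $\x = t\bv + \u$ satisfies $\langle \a_j, \x\rangle = tb_j + 1$, which exceeds $tb_j$ (so $\x \notin t\mathcal{P}$) but is strictly less than $(t+1)b_j$ (using $b_j \ge 2$), and the slack $\langle \a_i,\bv\rangle < b_i$ on facets not containing $\bv$ keeps the other constraints strict at level $t+1$.

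The main obstacle is the last step: ensuring that $\u$ can be chosen so that the other facet constraints are also strictly satisfied by $\x$ at level $t+1$, particularly on facets $F_i$ other than $F_j$ that happen to contain $\bv$. I would address this by replacing $\u$ with a convex-combination-style correction — for instance, averaging $\u$ with a suitable integer multiple of $\bv$ to cancel contributions on shared facets, or equivalently choosing $\u$ from the relative interior of a nearest parallel lattice slab to $F_j$ and invoking primitivity of $\a_j$ again to realize such a $\u$ by a lattice point; standard arguments in this circle of ideas guarantee that such a $\u$ exists whenever $b_j \ge 2$, closing the contradiction.
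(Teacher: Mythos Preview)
The paper does not prove this theorem; it is quoted from Hibi's work and used as a black box. So there is no ``paper's proof'' to compare against, and I evaluate your argument on its own merits.

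Your reduction is the standard one and is carried out correctly: Ehrhart--Macdonald reciprocity together with the palindromic condition $z^d h^\ast(1/z)=h^\ast(z)$ is equivalent to $\mathrm{Ehr}_{\mathcal{P}^\circ}(z)=z\,\mathrm{Ehr}_{\mathcal{P}}(z)$, hence to the set equality $(\ast)$, and the implication ``all $b_i=1\Rightarrow(\ast)$'' is immediate from integrality.

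The genuine gap is exactly where you flag it, and your proposed patch does not close it. With $\bv$ a vertex of $F_j$ lying on further facets $F_i$ ($i\in J\setminus\{j\}$), you need $\langle\a_i,\u\rangle<b_i$ for each such $i$; adding integer multiples of $\bv$ to $\u$ shifts $\langle\a_i,\u\rangle$ by multiples of $b_i$, which cannot produce a value strictly below $b_i$ unless one was already there, and the appeal to ``standard arguments in this circle of ideas'' is not a proof. The clean repair is to abandon the vertex $\bv$ in favour of a rational point $\p$ in the \emph{relative interior} of $F_j$, so that $\langle\a_j,\p\rangle=b_j$ while $\langle\a_i,\p\rangle<b_i$ strictly for every $i\neq j$. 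Choose $t$ to be a large multiple of the denominator of $\p$, so $t\p\in\Z^m$, fix any lattice $\u$ with $\langle\a_j,\u\rangle=1$ (primitivity), and set $\x=t\p+\u$. Then $\langle\a_j,\x\rangle=tb_j+1\in(tb_j,(t+1)b_j)$ since $b_j\ge2$, while for $i\neq j$ one has $(t+1)b_i-\langle\a_i,\x\rangle=t\bigl(b_i-\langle\a_i,\p\rangle\bigr)+b_i-\langle\a_i,\u\rangle\to\infty$; hence for $t$ large $\x\in(t+1)\mathcal{P}^\circ\setminus t\mathcal{P}$, contradicting $(\ast)$.
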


In other words, the $h^\ast$-polynomial of a reflexive lattice polytope is a palindromic polynomial of degree $d$.

A relaxation of reflexivity is the Gorenstein property. 
We say that $\polytope$ is \Def{Gorenstein} if there is some positive integer $c$ such that $c\polytope$ is a reflexive polytope, and the integer $c$ is called the \Def{Gorenstein index} of $\polytope$. 
Similarly, this is completely detected by the Ehrhart series, as $\polytope$ is Gorenstein if and only if its $h^\ast$-polynomial is palindromic of degree ${d-c+1}$ by a result of De Negri and Hibi~\cite{DeNegri-Hibi}. 

Given a  lattice polytope $\polytope$, one can consider the interplay between the convex geometry of $\polytope$ in $\R^m$ with the induced  arithmetic structure of $\polytope\cap \Z^m$.
This motivates the discussion of triangulations and the integer decomposition property.
A \Def{(lattice) triangulation} $\mathcal{T}$ of $\polytope$ is a decomposition of $\polytope$ as a lattice simplicial complex.
We say that $\mathcal{T}$ is \Def{regular} if the triangulation is induced as the domains of linearity of a piecewise-linear, convex function $\sigma:\polytope\to \R$.
We say that $\mathcal{T}$ is \Def{unimodular} if each maximal simplex $\Delta\in \mathcal{T}$ is a \Def{unimodular simplex}, that is, if the vertices of $\Delta$ generate $\Z^d$. 
We say that $\polytope$ has the \Def{integer decomposition property (IDP)} if 
for any positive integer $t$ and any lattice point $\p \in t\mathcal{P}\cap \mathbb{Z}^m$,
there are $t$ lattice points $\bv_1, \ldots, \bv_t \in \mathcal{P}\cap \mathbb{Z}^m$ such that $\p= \bv_1+\cdots+\bv_t.$ 
The existence of a (regular) unimodular triangulation of $\polytope$ ensures that $\polytope$ has IDP.
This implication is strict, as one can construct examples of polytopes with IDP without a unimodular triangulation (see, e.g., \cite{Bruns-Gubeladze,Firla-Ziegler}).

A sequence $a_0,a_1,\dots,a_n$ of real numbers is \Def{unimodal} if there is some $0\leq j\leq n$ such that $a_0 \leq a_1 \leq \cdots \leq a_{j-1} \leq a_j \geq a_{j+1} \geq \cdots \geq a_n$.
A common investigatory theme in Ehrhart theory is determining under what conditions one may ensure that coefficients of the $h^\ast$-vector form a unimodal sequence. 
The most notable sufficient result is the following. 

\begin{theorem} \label{thm-GorensteinTriangulation} \emph{(Bruns and R\"omer \cite{Bruns-Roemer}, Athanasiadis \cite[Theorem 1.3]{Athanasiadis-h*-vectors}\footnote{In this paper, the author also acknowledges unpublished work of Hibi
and Stanley.})} 
If $\polytope$ is Gorenstein and admits a regular, unimodular triangulation, then $h^\ast(\polytope)$ is a unimodal sequence.
\end{theorem}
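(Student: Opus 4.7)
The plan is to translate the geometric hypothesis into commutative algebra and deduce the shape of $h^\ast(\polytope)$ from algebraic properties of an associated graded ring; the Gorenstein hypothesis controls the symmetry of the sequence while the regular unimodular triangulation controls its shape through a Lefschetz-type statement.

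First I would associate to $\polytope$ the Ehrhart semigroup ring $A = k[\polytope]$ over a field $k$, generated by the monomials $\x^{\bv}z$ for $\bv \in \polytope\cap\Z^m$ at height one in the cone over $\polytope$. With its standard grading, the Hilbert series of $A$ equals $\Ehr_\polytope(z)$, so $h^\ast(\polytope)$ is the $h$-vector of $A$. The Gorenstein hypothesis on $\polytope$ is equivalent to $A$ being a Gorenstein ring, and Stanley's classical theorem then gives palindromicity of the $h$-vector, recovering the De Negri--Hibi characterization cited above.

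Next, by a theorem of Sturmfels, the regular unimodular triangulation $\mathcal{T}$ corresponds to a term order under which the toric ideal defining $A$ has a squarefree monomial initial ideal equal to the Stanley--Reisner ideal $I_\mathcal{T}$. Passage to the initial ideal preserves Hilbert series, as well as the Cohen--Macaulay and Gorenstein properties in this situation, so $h^\ast(\polytope)$ coincides with the $h$-vector of the Gorenstein Stanley--Reisner ring $k[\mathcal{T}]$, and it suffices to show that this $h$-vector is unimodal.

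This is the main obstacle. I would approach it by establishing the weak Lefschetz property: after quotienting by a linear system of parameters $\theta_1,\ldots,\theta_d$, one seeks a linear form $\omega$ such that multiplication by $\omega$ on the Artinian reduction $k[\mathcal{T}]/(\theta_1,\ldots,\theta_d)$ has maximal rank between consecutive degrees. Together with the palindromic symmetry, this forces the $h$-vector to be unimodal. The delicate step is the construction of $\omega$: the piecewise-linear convex function $\sigma:\polytope\to\R$ inducing the regular triangulation provides enough flexibility to choose a sufficiently generic linear form, and it is precisely here that regularity of $\mathcal{T}$ (not merely unimodularity) enters in an essential way. Producing this element rigorously, in a characteristic-independent fashion, is the technical core of the Bruns--R\"omer and Athanasiadis arguments.
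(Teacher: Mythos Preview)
The paper does not prove this theorem; it is quoted as a background result from the cited references \cite{Bruns-Roemer} and \cite{Athanasiadis-h*-vectors} and then applied in the proof of Proposition~\ref{prop:HibiOhsugi_Schur}. There is therefore nothing in the paper to compare your proposal against.

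As a sketch of the arguments in those references, your outline is broadly on target, but one step is stated too casually. You write that passage to the initial ideal ``preserves \ldots\ the Gorenstein properties in this situation,'' and then work with $k[\mathcal{T}]$ as a Gorenstein Stanley--Reisner ring. In general a squarefree initial degeneration of a Gorenstein toric ideal need not be Gorenstein, and for a Gorenstein polytope of index $c>1$ the Stanley--Reisner ring of a unimodular triangulation of $\polytope$ is typically only Cohen--Macaulay. The actual arguments handle this differently: Athanasiadis reduces (via a theorem of Batyrev--Borisov and Bruns--R\"omer) to a reflexive polytope with a regular unimodular triangulation whose boundary is a simplicial sphere, and then invokes the $g$-theorem for simplicial polytopes to obtain an $h$-vector that is an $M$-sequence, hence unimodal when combined with palindromicity. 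Bruns--R\"omer work directly with the Cohen--Macaulay initial complex and show its $h$-vector is the $h$-vector of a simplicial polytope. Neither route literally establishes a weak Lefschetz property for $k[\mathcal{T}]$ as you describe; the Lefschetz input is hidden inside the $g$-theorem. If you want to keep your narrative, you should replace the Gorenstein claim for $k[\mathcal{T}]$ with the reduction to a boundary simplicial sphere and an appeal to the $g$-theorem.
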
 	
	
Given that these conditions are rather restrictive, it is natural to consider relaxations to determine if unimodality still holds. 
It is known that Gorenstein is not sufficient for unimodality as indicated by Payne~\cite{Payne}, though none of the examples from this reference exhibit IDP.

The following even broader question was posed by Scheppers and Van Langenhoven:

\begin{question}[Scheppers and Van Langenhoven \cite{Schepers-VanLangenhoven}]
If $\polytope$ has the integer decomposition property, is $h^\ast(\polytope)$ a unimodal sequence?

\end{question}

\subsection{Newton polytopes}
Given a polynomial $f=\sum_{\alpha}c_{\alpha} \x^{\alpha} \in \mathbb{C}[x_1,x_2,\dots, x_m]$ where $\alpha \in \mathbb{Z}^m_{\geq 0}$, the \Def{Newton polytope} $\Newt(f)$ of $f$ is defined as the convex hull of the exponent vectors of $f$.
That is, 
$$\Newt(f)\coloneqq \conv\{\alpha \mid c_{\alpha}\neq 0\}.$$
A polynomial $f$ has \Def{saturated Newton polytope (SNP)} if every lattice point  $\alpha\in\Newt(f)\cap \Z^m$ appears as an exponent vector of $f$, that is, $c_\alpha\neq 0$. 
This notion was introduced by Monical, Tokcan, and Yong in \cite{Monical-Tokcan-Yong}.

We now define our main objects of study in this paper, Newton polytopes arising from Schur polynomials and from inflated symmetric Grothendieck polynomials, both of which have SNP. 
A \Def{partition} of a nonnegative integer $n$ with at most $m$ parts is $\lambda=(\lambda_1,\lambda_2,\dots,\lambda_m)$ with $\lambda_1\geq \lambda_2 \geq \cdots \geq \lambda_m\geq 0$ and $\sum_{i=1}^m \lambda_i = n$. This is denoted by $\lambda\vdash n$.
The number of positive parts of $\lambda$ is denoted by $\ell(\lambda)$. 
The \Def{Young diagram} associated to $\lambda$ is an arrangement of boxes with $\lambda_i$ boxes in the $i$-th row, with rows aligned at the left.
Given partitions $\mu$ and $\lambda$ such that the Young diagram of $\lambda$ is contained in the Young diagram of $\mu$, the \Def{skew shape} $\mu/\lambda$ is the Young diagram consisting of boxes in $\mu$ which are not in $\lambda$.
A \Def{semistandard Young tableau} is a filling of a Young diagram with positive integers such that entries are weakly increasing along each row and strictly increasing along each column. 
Let $\SSYT^{[m]}(\mu/\lambda)$ denote the set of all semistandard Young tableaux of shape $\mu/\lambda$ with fillings from $[m]=\{1,\dots,m\}$.

\begin{definition}
Let $\x = (x_1,\ldots,x_m)$.
The \Def{Schur polynomial} in $m$ variables indexed by $\lambda\vdash n$ is 
\[
s_{\lambda}(\x)=\sum_{T\in\SSYT^{[m]}(\lambda)} \x^T,
\]
where $\x^T= x_1^{d_1(T)}\cdots x_m^{d_m(T)}$ such that $d_i(T)$ is the number of times $i$ appears in $T$. 
\end{definition}

\begin{example}\label{ex:(3)} 
Consider the partition $\lambda=(3,0,0)\vdash 3$. Let $m=3$ and $\x = (x_1,x_2,x_3)$. The semistandard Young tableaux are
$$\ytableausetup{smalltableaux}
\ytableaushort{111}\quad 
\ytableaushort{222}\quad 
\ytableaushort{333}\quad
\ytableaushort{112}\quad
\ytableaushort{113}\quad
\ytableaushort{122}\quad
\ytableaushort{133}\quad
\ytableaushort{223}\quad
\ytableaushort{233}\quad
\ytableaushort{123}      
$$
and the associated Schur polynomial is
\[
s_{(3,0,0)}(\x) = 
x_1^3+x_2^3+x_3^3
+ x_1^2x_2+x_1^2x_3+x_1x_2^2+x_1x_3^2+x_2^2x_3+x_2x_3^2  
+x_1x_2x_3.
\]
The Newton polytope $\Newt(s_{(3,0,0)}(\x))$ is the convex hull of the points \[
(3,0,0),(2,1,0),(2,0,1),(1,2,0),(1,1,1),(1,0,2),(0,3,0),(0,2,1),(0,1,2),(0,0,3)
.\]
\end{example}


\begin{example}\label{ex:(2,1)}
Consider the partition $\lambda=(2,1,0)\vdash 3$. Let $m=3$ and $\x = (x_1,x_2,x_3)$. The semistandard Young tableaux are 
$$
\ytableaushort{11,2}\quad
\ytableaushort{11,3}\quad
\ytableaushort{12,2}\quad
\ytableaushort{13,3}\quad
\ytableaushort{22,3}\quad
\ytableaushort{23,3}\quad
\ytableaushort{12,3}\quad
\ytableaushort{13,2}
$$
and the associated Schur polynomial is 
\[
s_{(2,1,0)}(\x) = 
x_1^2x_2+x_1^2x_3+x_1x_2^2+x_1x_3^2+x_2^2x_3+x_2x_3^2+2x_1x_2x_3.
\]
The Newton polytope $\Newt(s_{(2,1,0)}(\x))$ is the convex hull of the points
\[
(2,1,0),(2,0,1),(1,2,0),(1,0,2),(0,2,1),(0,1,2),(1,1,1).
\]
\end{example}

Since Schur polynomials are homogeneous polynomials, 
$\Newt(s_\lambda(x_1,\ldots, x_m))$ is an $(m-1)$-dimensional polytope in $\mathbb{R}^m$.
Consequently, the polytopes for the Schur polynomials in Example \ref{ex:(3)} and Example \ref{ex:(2,1)} are $2$-dimensional polytopes in $\mathbb{R}^3$. In Figure \ref{fig:schur}, we have depicted these polytopes (equivalently) in the plane for convenience. It is known that Schur polynomials have SNP~\cite[Proposition~2.5]{Monical-Tokcan-Yong}.

\begin{figure}[ht!]
\begin{center}
\includegraphics[height=3.5cm]{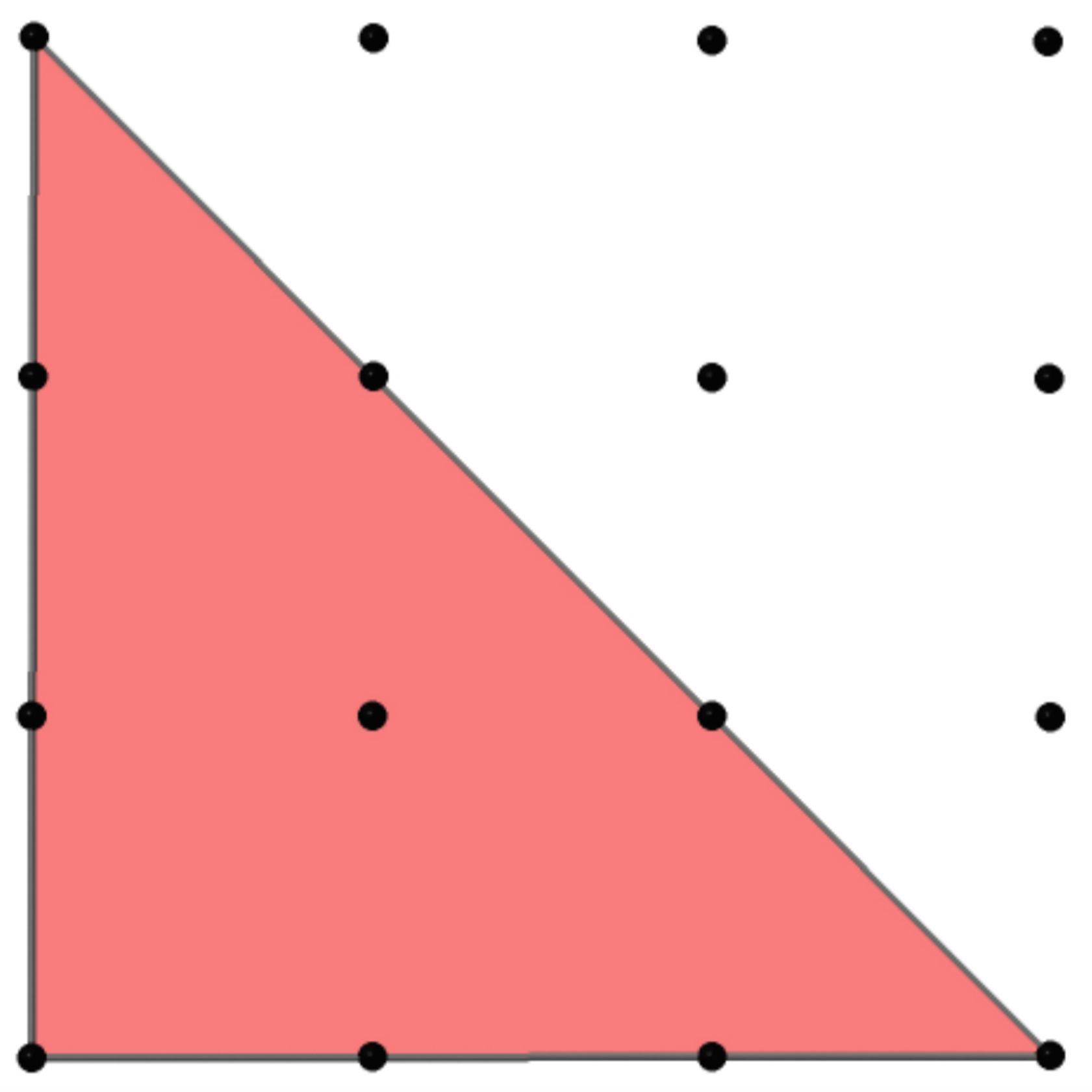} 
\hspace{2cm}
\includegraphics[height=3.5cm]{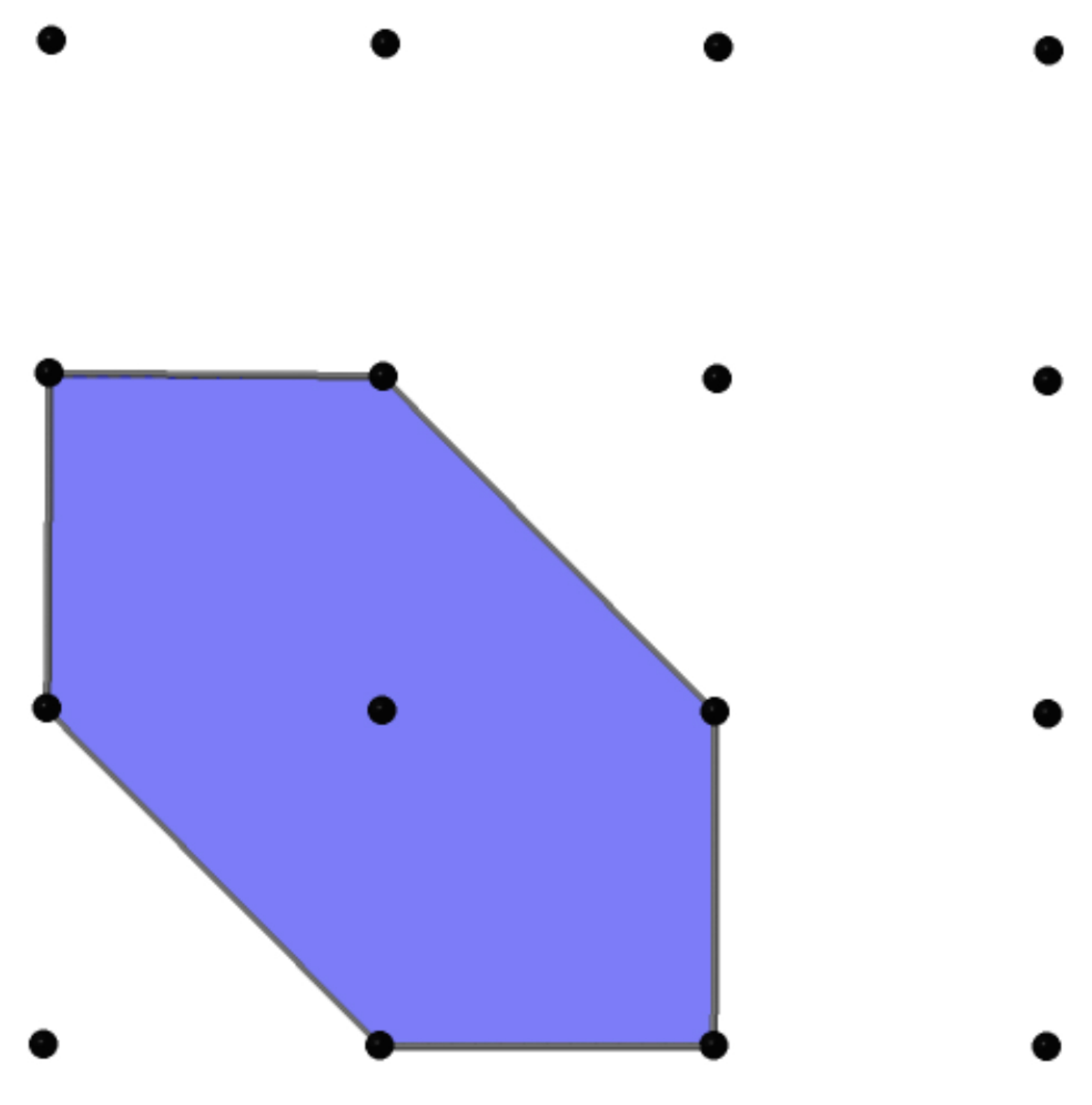} 
\caption{Newton polytopes for Schur polynomials in Example \ref{ex:(3)} (left) and Example \ref{ex:(2,1)} (right) drawn in $\mathbb{R}^2$ rather than in a $2$-dimensional subspace of $\mathbb{R}^3$. Both of these polytopes can be shown to be reflexive if we translate them so that their unique  interior point is $(0,0)$.}
\label{fig:schur}
\end{center}
\end{figure}

In Section~\ref{sec:Reflexive_Schur}, we characterize which Newton polytopes arising from Schur polynomials are reflexive. Figure \ref{fig:Reflexive-Nonreflexive-BG} illustrates examples of reflexive and nonreflexive $\Newt(s_\lambda(\x))$, generated using Normaliz \cite{Normaliz} and SageMath \cite{sagemath}.

\begin{figure}[ht!]
\begin{center}
\includegraphics[height=5.5cm]{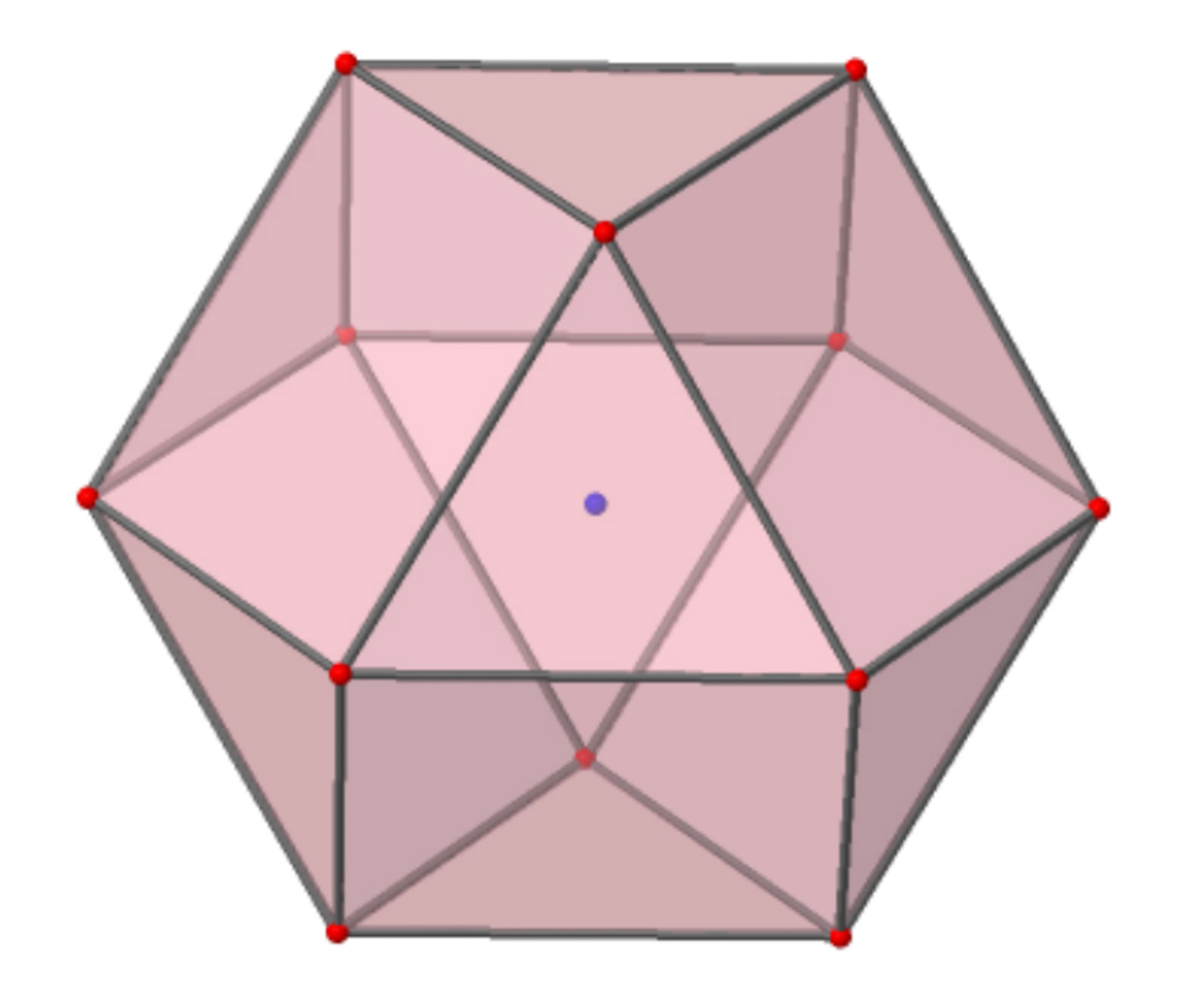} 
\hspace{1cm}
\includegraphics[height=5.5cm]{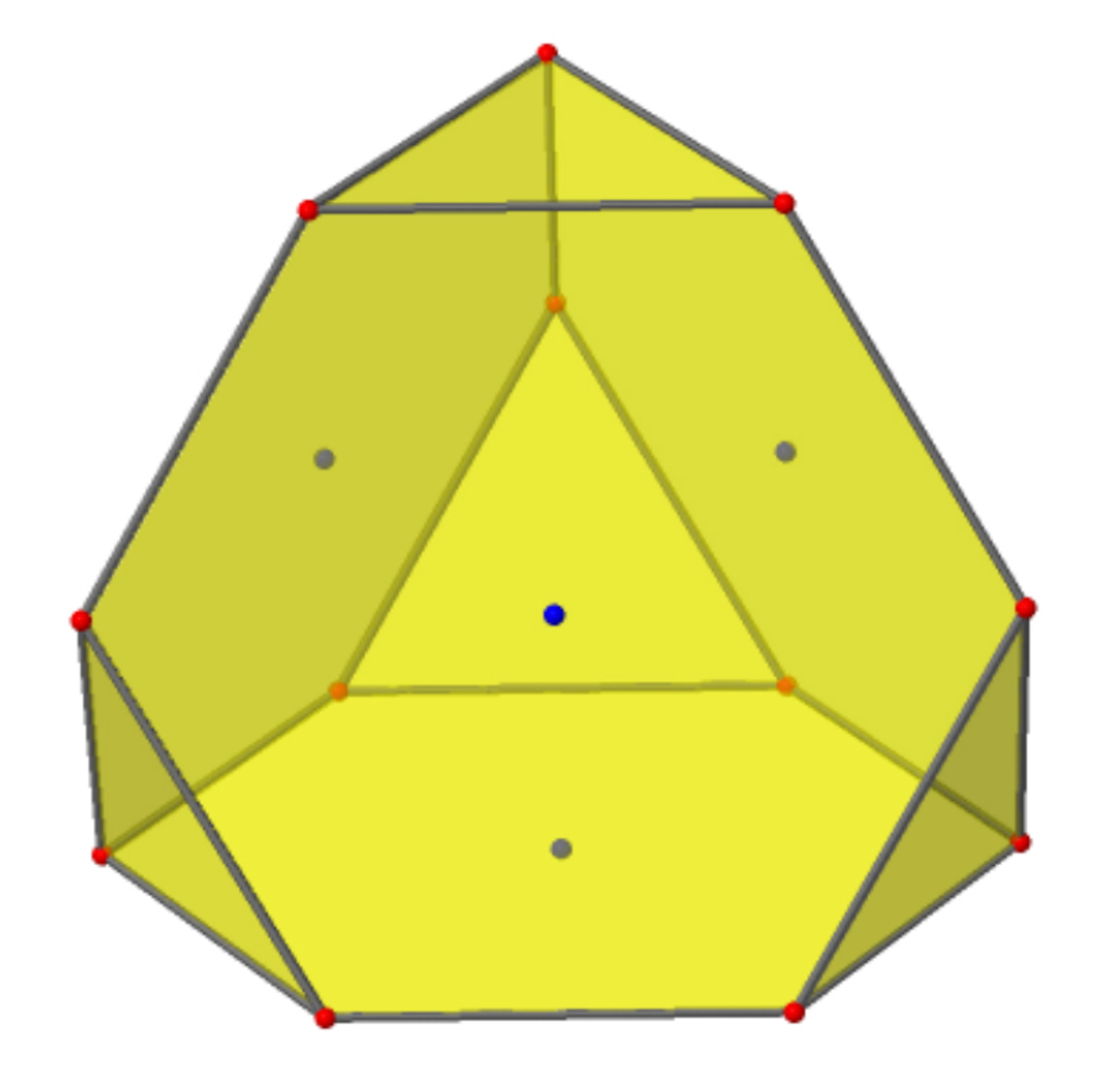} 
\caption{The polytope $\Newt(s_{(2,1,1,0)}(\x))$, on the left, is reflexive and the polytope $\Newt(s_{(2,1,0,0)}(\x))$, on the right, is not reflexive.}
\label{fig:Reflexive-Nonreflexive-BG}
\end{center}
\end{figure}

\begin{remark}
The Newton polytope $\Newt(s_\lambda(\x))$ of a Schur polynomial 
is the $(m-1)$-dimensional \Def{$\lambda$-permutohedron} $\mathcal{P}_\lambda^m$ in $\mathbb{R}^m$, which is the convex hull of the $S_m$-orbit of $(\lambda_1,\ldots,\lambda_m)\in \mathbb{R}^m$~\cite[Theorem 0.1]{Escobar-Yong}.
\end{remark}

Symmetric Grothendieck polynomials can be thought of as an inhomogeneous analogue of Schur polynomials. The following definition is due to Lenart~\cite[Theorem 2.2]{Lenart}.

\begin{definition} Let $\x=(x_1,\ldots, x_m)$ and let $\lambda$ be a partition with at most $m$ parts. For any partition $\mu\supseteq \lambda$ with at most $m$ rows, let $a_{\lambda\mu}$ be the number of fillings of the skew shape $\mu/ \lambda$ such that the filling increases strictly along each row and each column, and the filling in the $r$-th row is from $\{1,\ldots, r-1\}$. Let
$$A(\lambda) = \{ \mu \mid a_{\lambda\mu} \neq 0 \}. $$
The {\em symmetric Grothendieck polynomial} indexed by $\lambda$ is
$$G_\lambda(\x) = \sum_{\mu\in A(\lambda)} (-1)^{|\mu/\lambda|} a_{\lambda\mu} s_\mu(\x).$$
\end{definition}

\begin{example} \label{210example}
Let $\lambda =(2,1,0)\vdash 3$, $m=3$, and $\mathbf{x}=(x_1,x_2,x_3)$. Then
$$G_{(2,1,0)}(\mathbf{x}) = \textcolor{black}{s_{(2,1,0)}(\mathbf{x})} - \textcolor{black}{\left(s_{(2,2,0)}(\mathbf{x}) + 2s_{(2,1,1)}(\mathbf{x})\right)} + \textcolor{black}{2s_{(2,2,1)}(\mathbf{x})} - \textcolor{black}{s_{(2,2,2)}(\mathbf{x})}.$$
See Figure~\ref{fig.Grothendieck} for an illustration of the Newton polytope of $G_{(2,1,0)}(\mathbf{x})$. 
\end{example}

Escobar and Yong~\cite{Escobar-Yong} have shown that symmetric Grothendieck polynomials $G_\lambda(\x)$ have SNP.

\section{The Integer Decomposition Property}
In this section we will show that the Integer Decomposition Property (IDP) holds for Schur polynomials and a generalization of the symmetric Grothendieck polynomials.

\subsection{The Newton polytope of a Schur polynomial}
Using the realization of the Newton polytope $\Newt(s_\lambda(\x))$ as the $\lambda$-permutohedron $\mathcal{P}_\lambda^{m}$, we show that all Newton polytopes of Schur polynomials have IDP.
One should note that this result is already known by the theory of generalized permutohedra and polymatroids (see, e.g., {\cite[Corollary 46.2c]{Schrijver}}). 
However, we provide our proof as it motivates our methods for the Newton polytope of symmetric Grothendieck polynomials. 

\begin{proposition} \label{thm:Schur_IDP}
Let $\lambda$ be a partition with at most $m$ parts and let $\x = (x_1,\ldots, x_m)$. Then the Newton polytope $\Newt(s_\lambda(\x)) = \mathcal{P}_{\lambda}^m$ has the integer decomposition property.
\end{proposition}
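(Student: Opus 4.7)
The plan is to combine the saturated Newton polytope property of Schur polynomials with an explicit cyclic column-splitting of semistandard Young tableaux.

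Since $s_\lambda$ is homogeneous of degree $|\lambda|$, dilating gives $t\,\Newt(s_\lambda) = \Newt(s_{t\lambda}) = \mathcal{P}_{t\lambda}^m$. By the SNP property of Schur polynomials \cite[Proposition~2.5]{Monical-Tokcan-Yong}, the lattice points of $\mathcal{P}_{t\lambda}^m$ are exactly the weight vectors $(d_1(T),\ldots,d_m(T))$ of tableaux $T \in \SSYT^{[m]}(t\lambda)$. So given $\mathbf{p} \in t\mathcal{P}_\lambda^m \cap \mathbb{Z}^m$, I would first fix some $T \in \SSYT^{[m]}(t\lambda)$ whose weight is $\mathbf{p}$, and then reduce the problem to exhibiting $t$ tableaux $T_1,\ldots,T_t \in \SSYT^{[m]}(\lambda)$ whose weights sum to $\mathbf{p}$.

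To do this, I would build the $T_k$ by a cyclic splitting of the columns of $T$: for each $k \in \{1,\ldots,t\}$ and each row index $i$, declare the entry of $T_k$ at row $i$, column $j$ (for $j = 1,\ldots,\lambda_i$) to be the entry of $T$ at row $i$, column $k + (j-1)t$. The $t\lambda_i$ columns of row $i$ of $T$ are then partitioned among $T_1,\ldots,T_t$ with each $T_k$ receiving exactly $\lambda_i$ entries in row $i$, so the weights of the $T_k$ sum to the weight of $T$, namely $\mathbf{p}$. Row-weak monotonicity of each $T_k$ is immediate since its rows are obtained from rows of $T$ by selecting entries in increasing column order.

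The key step is verifying column-strictness in each $T_k$. The entries at positions $(i,j)$ and $(i+1,j)$ of $T_k$ are drawn from the \emph{same} column $c = k + (j-1)t$ of $T$, and this column lies within row $i+1$ of $T$ because $j \le \lambda_{i+1}$ forces $c \le t\lambda_{i+1}$. Column-strictness of $T_k$ at $(i,j)$ therefore follows directly from column-strictness of $T$ at column $c$. The main obstacle to overcome is the choice of splitting scheme: the naive "first $\lambda_i$ entries of row $i$" block splitting can violate column-strictness precisely when $\lambda_i > \lambda_{i+1}$, since then the $j$th entries in consecutive rows of each piece come from different columns of $T$ and the inherited inequality runs in the wrong direction. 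The cyclic $t$-periodic splitting sidesteps this by aligning the consecutive-row entries of each $T_k$ at the same column of $T$, thereby reducing the column-strict condition for the pieces to the column-strict condition already provided by $T$.
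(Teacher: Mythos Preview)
Your proof is correct and is essentially the same argument as the paper's: both identify $t\,\Newt(s_\lambda)$ with $\Newt(s_{t\lambda})$, invoke SNP to realize a lattice point as the content of some $T\in\SSYT^{[m]}(t\lambda)$, and then split $T$ into $T_1,\ldots,T_t$ by taking the columns of $T$ in each residue class modulo $t$. Your cyclic rule $T_k(i,j)=T(i,\,k+(j-1)t)$ is exactly the paper's ``$T_i$ consists of the $j$-th columns of $T$ for $j\equiv i\bmod t$''; you simply supply the verification of semistandardness (and the remark on why block splitting fails) that the paper leaves implicit.
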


\begin{proof}
The vertices of the $t$-th dilate $t\mathcal{P}_\lambda^{m}$ are the vertices of $\mathcal{P}_\lambda^{m}$ scaled by $t$, so the vertices of $t\mathcal{P}_\lambda^{m}$ are given by the $S_m$-orbit of $t\lambda$, and $t\mathcal{P}_\lambda^{m} = \Newt(s_{t\lambda}(\x))$.

Let $\p$ be a point in the $t$-th dilate $t\mathcal{P}_\lambda^m = \Newt(s_{t\lambda}(\x))$.  
Since $s_{t\lambda}(\x)$ has saturated Newton polytope, then $\p$ is the content vector of a semistandard Young tableaux $T$ of shape $t\lambda$.  
The tableau $T$ decomposes into $t$ semistandard Young tableaux $T_1,\ldots, T_t$ each of shape $\lambda$ such that $T_i$ consists of the $j$-th columns of $T$ for $j\equiv i\mod t$.
Letting $\bv_i$ denote the content vector of $T_i$, then $\p = \bv_1 + \cdots + \bv_t$, so $\Newt(s_\lambda(\x))$ has IDP.
\end{proof}

\begin{example} 
Let $m=3$, $\x=(x_1,x_2,x_3)$, and $\lambda = (2,1,0)\vdash 3$. 
Each lattice point in the dilated polytope $3\Newt(s_\lambda(\x)) = \Newt(s_{3\lambda}(\x))$ is the content vector of a semistandard Young tableau $T$ of shape $3\lambda = (6,3,0)$, and the lattice point can be decomposed into the sum of three points which are content vectors of semistandard Young tableaux $T_1, T_2, T_3$ of shape $\lambda$ by taking the  columns of $T$ mod $3$.

\definecolor{pink1}{HTML}{E18D96}
\definecolor{green2}{HTML}{C1CD97}
\definecolor{yellow3}{HTML}{F3DDB3}
\ytableausetup{boxsize=normal}
\begin{align*}
\begin{ytableau}
   *(pink1) 1 &*(green2) 1 &*(yellow) 2 &*(pink1) 2 &*(green2) 2 &*(yellow) 3 \\
   *(pink1) 2 &*(green2) 3 &*(yellow) 3 
  \end{ytableau}
&=
\begin{ytableau}
   *(pink1) 1 &*(pink1) 2 \\
   *(pink1) 2 
\end{ytableau}
+
\begin{ytableau}
   *(green2) 1&*(green2) 2 \\
   *(green2) 3
\end{ytableau}
+
\begin{ytableau}
   *(yellow) 2 &*(yellow) 3 \\
   *(yellow) 3 
\end{ytableau}\\ \\
(2,4,3) &= (1,2,0) + (1,1,1) + (0,1,2) 
\end{align*}
\end{example}

\subsection{The Newton polytope of a symmetric Grothendieck polynomial}

A notable difference between the Newton polytope of Schur polynomials versus symmetric Grothendieck polynomials is that unlike the case of Schur polynomials, $t\Newt(G_\lambda(\x)) \neq \Newt(G_{t\lambda}(\x))$. 
Motivated by our study of the integer decomposition property of the Newton polytope of symmetric Grothendieck polynomials, we make the following definition.

\begin{definition}
Let $h$ be a positive integer.  Let $\x=(x_1,\ldots, x_m)$ and let $\lambda \vdash n$ be a partition with at most $m$ parts. 
For any partition $\mu \supseteq \lambda$ with at most $m$ rows, let $b_{h,\lambda\mu}$ be the number of fillings of the skew shape $\mu/\lambda$ such that the filling increases strictly along each row and each column, and the filling in the $r$-th row is from $\{1,\ldots, h(r-1)\}$. 
Let
$$A(h,\lambda) = \{ \mu \mid b_{h,\lambda\mu}\neq0 \}. $$
The \Def{inflated symmetric Grothendieck polynomial} indexed by $\lambda$ and $h$ is
\[G_{h,\lambda}(\x) = \sum_{\mu\in A(h,\lambda)} (-1)^{|\mu/\lambda|} b_{h,\lambda\mu}s_\mu(\x).\]
\end{definition}

\begin{example} Let $\lambda =(2,1,0) \vdash 3$, $m=3$, $h=2$, and $\x=(x_1,x_2,x_3)$.  Then
$$G_{2,(2,1,0)}(\x) = s_{(2,1,0)}(\x) - (2s_{(2,2,0)}(\x) + 4s_{(2,1,1)}(\x))
   + 8s_{(2,2,1)}(\x) - 11 s_{(2,2,2)}(\x).$$ 
Compare with Example~\ref{210example}.
\end{example}

\begin{remark}
Note that $G_{1,\lambda}(\x)=G_\lambda(\x)$ is the usual symmetric Grothendieck polynomial. 
\end{remark}

Escobar and Yong~\cite{Escobar-Yong} showed that the symmetric Grothendieck polynomial $G_\lambda(\x)$ has SNP and described the components of the Newton polytope associated to the homogeneous components of $G_\lambda(\x)$. 
We extend the work of Escobar and Yong to $G_{h,\lambda}(\x)$ and show that $G_{h,\lambda}(\x)$ also has SNP.

\subsubsection{Inflated symmetric Grothendieck polynomials and SNP}
  
\begin{definition}
For two partitions $\mu,\lambda \vdash n$, we say $\mu$ \Def{dominates} $\lambda$ and write $\mu \,\unrhd\, \lambda$, if $\mu_1 + \cdots +\mu_i \geq \lambda_1 + \cdots + \lambda_i$ for every $i \geq 1$. 
\end{definition}

\begin{definition}\label{defn.partition_sequence} 
Let $h$ be a positive integer and let $\lambda$ be a partition with at most $m$ parts.
Let $\lambda^{(0)} =\lambda$ and for $k\geq 1$, let $\lambda^{(k)} \vdash |\lambda|+k$ be the partition obtained by adding a box to the $r_k$-th row of $\lambda^{(k-1)}$, where $r_k\in[m]$ is the smallest integer such that 
$$\lambda^{(k-1)}_{r_k} - \lambda_{r_k} < h(r_k-1),$$ and adding a box to the $r_k$-th row of $\lambda^{(k-1)}$ results in a valid partition. 
If $\deg G_{h,\lambda}(\x) = |\lambda|+N$, we say $\lambda^{(0)}, \ldots, \lambda^{(N)}$ is the \Def{sequence of dominating partitions for $G_{h,\lambda}(\x)$}.
\end{definition}

We justify this terminology with the next result. Lemma~\ref{lem.dominance}(a) is an extension of the result~\cite[Claim A]{Escobar-Yong} of Escobar-Yong to the case of inflated symmetric Grothendieck polynomials.

\begin{lemma} \label{lem.dominance} 
Let $\deg G_{h,\lambda}(\x)=|\lambda|+N$, and let $\{\lambda^{(0)}, \ldots, \lambda^{(N)}\}$ be the sequence of dominating partitions for $G_{h,\lambda}(\x)$.
\begin{enumerate}
\item[(a)] For $k=0,\ldots, N$, the partition $\lambda^{(k)}$ dominates all other partitions $\mu\in A(h,\lambda)$ such that $\mu \vdash |\lambda|+k$.
\item[(b)] The partition $\lambda^{(N)}$ is the unique partition of $ |\lambda|+N$ in $A(h,\lambda)$.
\item[(c)] $A(h,\lambda) = \{\mu \mid \lambda \subseteq \mu \subseteq \lambda^{(N)} \}. $
\end{enumerate}
\end{lemma}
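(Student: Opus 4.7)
The plan is to first establish an explicit combinatorial characterization of $A(h,\lambda)$, derive parts (c) and (b) as quick corollaries, and then obtain (a) from a careful analysis of the greedy construction in Definition~\ref{defn.partition_sequence}.

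First I will show that a partition $\mu \supseteq \lambda$ with at most $m$ rows lies in $A(h,\lambda)$ if and only if $\mu_r - \lambda_r \leq h(r-1)$ for every $r$. Necessity is immediate, since the $r$-th row of $\mu/\lambda$ must contain $\mu_r - \lambda_r$ distinct entries drawn from $\{1,\ldots,h(r-1)\}$. For sufficiency I exhibit the explicit filling $T(r,c) = h(r-1) - (\mu_r - c)$ on each box $(r,c) \in \mu/\lambda$: row-strictness is clear, column-strictness follows from
\[
T(r+1,c) - T(r,c) = h + (\mu_r - \mu_{r+1}) \geq h \geq 1,
\]
and the range condition $1 \leq T(r,c) \leq h(r-1)$ uses $\lambda_r < c \leq \mu_r$ together with $\mu_r - \lambda_r \leq h(r-1)$.

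Next, by analyzing when the greedy procedure halts I establish the recursion
\[
\lambda^{(N)}_1 = \lambda_1, \qquad \lambda^{(N)}_r = \min\bigl(\lambda^{(N)}_{r-1},\, \lambda_r + h(r-1)\bigr) \quad (r \geq 2),
\]
since otherwise a further box could be added to row $r$. Combining this with the characterization above, any $\mu \in A(h,\lambda)$ satisfies $\mu_r \leq \min(\mu_{r-1},\lambda_r + h(r-1))$; an induction on $r$ yields $\mu \subseteq \lambda^{(N)}$, and the reverse inclusion is immediate. This proves (c), and (b) follows at once since $|\mu| \leq |\lambda^{(N)}| = |\lambda|+N$ with equality only at $\mu = \lambda^{(N)}$.

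For (a), I will prove by induction on $k$ that $r_k$ is precisely the smallest row $r$ at which $\lambda^{(k-1)}_r < \lambda^{(N)}_r$. For every strictly smaller $r$ the equality $\lambda^{(k-1)}_r = \lambda^{(N)}_r$ together with the recursion for $\lambda^{(N)}_r$ forces one of the two conditions of Definition~\ref{defn.partition_sequence} to fail (either $\lambda^{(N)}_r = \lambda_r + h(r-1)$ saturates the room condition, or $\lambda^{(N)}_r = \lambda^{(N)}_{r-1} = \lambda^{(k-1)}_{r-1}$ blocks the partition condition), while at the targeted row both conditions hold by direct inspection. This yields the closed form
\[
\sum_{r=1}^{i} \lambda^{(k)}_r \;=\; \min\!\Bigl(\sum_{r=1}^i \lambda_r + k,\; \sum_{r=1}^{i} \lambda^{(N)}_r\Bigr)
\]
by a case split on whether rows $1, \ldots, i$ of $\lambda^{(k-1)}$ have reached their $\lambda^{(N)}$-cap. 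Finally, for any $\mu \in A(h,\lambda)$ with $|\mu| = |\lambda|+k$, part~(c) gives $\sum_{r \leq i} \mu_r \leq \sum_{r \leq i}\lambda^{(N)}_r$, while $\mu \supseteq \lambda$ and $|\mu/\lambda| = k$ give $\sum_{r \leq i}\mu_r \leq \sum_{r \leq i}\lambda_r + k$; together these show $\sum_{r \leq i}\mu_r \leq \sum_{r \leq i}\lambda^{(k)}_r$, i.e., $\lambda^{(k)} \unrhd \mu$. The delicate point is the inductive identification of $r_k$, where the interplay between the room and partition constraints must be unraveled using the recursion for $\lambda^{(N)}_r$; the explicit filling for the characterization and the partial-sum identity are then routine to verify.
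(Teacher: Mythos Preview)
Your proof is correct, but the route differs substantially from the paper's.

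The paper proves (a) first, by a short contradiction argument: assuming some $\mu\in A(h,\lambda)$ with $|\mu|=|\lambda|+k$ is not dominated by $\lambda^{(k)}$, it takes the least index $s$ where the partial sums cross, deduces $\mu_s>\lambda^{(k)}_s$, and then splits on whether $s<r_k$ or $s\ge r_k$ to contradict the greedy choice of $r_k$. Parts (b) and (c) are then dismissed in one line as following ``from the maximality of $\lambda^{(N)}$.''

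You invert the order: you first give an explicit membership test for $A(h,\lambda)$ (the row-bound $\mu_r-\lambda_r\le h(r-1)$, with a concrete witness filling), extract the recursion $\lambda^{(N)}_r=\min(\lambda^{(N)}_{r-1},\lambda_r+h(r-1))$ for the terminal shape, and obtain (c) and (b) cleanly from that. For (a) you then prove the structural fact that $r_k$ is always the least row not yet at its $\lambda^{(N)}$-cap, which yields the closed form $\sum_{r\le i}\lambda^{(k)}_r=\min\bigl(\sum_{r\le i}\lambda_r+k,\ \sum_{r\le i}\lambda^{(N)}_r\bigr)$; dominance then drops out of the two obvious upper bounds on $\sum_{r\le i}\mu_r$ coming from $\mu\supseteq\lambda$ and $\mu\subseteq\lambda^{(N)}$.

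What each buys: the paper's argument for (a) is shorter and needs no auxiliary formulas, but its treatment of (b) and (c) is terse and relies on the reader unpacking ``maximality'' into essentially your recursion. Your approach is longer but more transparent for (b) and (c), and the by-products---the explicit filling, the recursion for $\lambda^{(N)}$, and the partial-sum identity---are exactly the ingredients that resurface later in the paper (e.g.\ the description $\lambda^{(b_i)}=(\lambda_1+a_1,\ldots,\lambda_i+a_i,\lambda_{i+1},\ldots,\lambda_m)$ in Proposition~\ref{proposition_vertices_of_sgp}). One small point worth making explicit in your write-up: you tacitly identify the index $N$ at which the greedy procedure halts with $\deg G_{h,\lambda}(\x)-|\lambda|$; this follows once you know the terminal shape is the containment-maximum of $A(h,\lambda)$, but it deserves a sentence.
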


\begin{proof}
Let $\mu\in A(h,\lambda)$ such that $\mu \vdash |\lambda|+k$.
Suppose for contradiction that $\lambda^{(k)}$ does not dominate $\mu$, so that there exists a minimum $s>1$ such that 
$\mu_1+\cdots+\mu_{s-1} \leq \lambda_1^{(k)} + \cdots +\lambda_{s-1}^{(k)}$
but
$\mu_1+\cdots+\mu_s > \lambda_1^{(k)} + \cdots +\lambda_s^{(k)}$. 
This implies $\mu_s > \lambda_s^{(k)}$. 

The partition $\lambda^{(k)}$ was obtained by adding a box to $\lambda^{(k-1)}$ in the $r_k$-th row.
If $s< r_k$, then
$$\lambda_s^{(k-1)}-\lambda_s \leq \lambda_s^{(k)}-\lambda_s < \mu_s - \lambda_s \leq h(s-1),$$
so a box would have been added to $\lambda^{(k-1)}$ in the $s$-th row to obtain $\lambda^{(k)}$, contradicting the construction of $\lambda^{(k)}$.  Thus $s\geq r_k$. 
But then by the construction of $\lambda^{(k)}$, for all $j\geq r_k$,
$$(\lambda_1^{(k)}+\cdots+\lambda_{j}^{(k)}) - \left(\lambda_1+\cdots+\lambda_{j} \right) = k 
\geq (\mu_1+\cdots+\mu_{j}) - \left(\lambda_1+\cdots+\lambda_{j} \right),$$
which contradicts the existence of $s$, so part (a) holds.

Parts (b) and (c) follow from the maximality of $\lambda^{(N)}$.
\end{proof}

\begin{proposition}
\label{proposition_layers_of_polytope}
Let $h$ be a positive integer, and let $\lambda$ be a partition with at most $m$ parts.  Suppose $\deg G_{h,\lambda}(\x) = |\lambda|+N$, and let $\lambda^{(0)},\ldots, \lambda^{(N)}$ be the sequence of dominating partitions for $G_{h,\lambda}(\x)$.
Further, let $H_k$ be the hyperplane in $\mathbb{R}^m$ defined by $\sum_{i=1}^m x_i = |\lambda|+k$. Then 
$$\Newt(G_{h,\lambda}(\x)) \cap H_k = \Newt(s_{\lambda^{(k)}}(\x)).$$
\end{proposition}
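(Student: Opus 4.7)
The plan is to establish the two inclusions separately after first pinning down the support of the relevant homogeneous component. The degree-$(|\lambda|+k)$ component of $G_{h,\lambda}(\x)$ equals $(-1)^k \sum_{\mu \in A(h,\lambda),\,|\mu|=|\lambda|+k} b_{h,\lambda\mu}\, s_\mu(\x)$. Because all $b_{h,\lambda\mu}>0$ and Schur polynomials have nonnegative coefficients, the terms in this sum share the sign $(-1)^k$, so no cancellation occurs. Using SNP of each $s_\mu$, the support is $\bigcup_\mu (\Newt(s_\mu)\cap \Z^m)$. By Lemma~\ref{lem.dominance}(a), each such $\mu$ is dominated by $\lambda^{(k)}$, and for partitions of the same size dominance corresponds to containment of permutohedra, so $\Newt(s_\mu)\subseteq \Newt(s_{\lambda^{(k)}})$; moreover $\lambda^{(k)}$ itself lies in $A(h,\lambda)$. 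Hence the support of the degree-$(|\lambda|+k)$ component is exactly $\Newt(s_{\lambda^{(k)}})\cap \Z^m$.

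The inclusion $\Newt(s_{\lambda^{(k)}})\subseteq \Newt(G_{h,\lambda})\cap H_k$ is then immediate: the vertices of $\Newt(s_{\lambda^{(k)}})$ are the $S_m$-orbit of $\lambda^{(k)}$, which lie in the support just computed and in $H_k$, so by convexity their convex hull $\Newt(s_{\lambda^{(k)}})$ is contained in $\Newt(G_{h,\lambda})\cap H_k$.

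For the reverse inclusion, take $\p \in \Newt(G_{h,\lambda})\cap H_k$. Since $\Newt(G_{h,\lambda})$ is the convex hull of lattice points at the heights $|\lambda|+k'$ for $k'=0,\dots,N$, grouping by layer I can write $\p=\sum_{k'} d_{k'} q_{k'}$ with $q_{k'}\in \Newt(s_{\lambda^{(k')}})$, $d_{k'}\geq 0$, $\sum d_{k'}=1$, and $\sum d_{k'} k'=k$ (using $\p\in H_k$). The polytope $\Newt(s_{\lambda^{(k)}})$ is cut out of $H_k$ by the partial-sum inequalities $\sum_{i\in S} x_i \le f_j(k)$ for $|S|=j$, where $f_j(k'):=\lambda^{(k')}_1+\cdots+\lambda^{(k')}_j$. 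Since each $q_{k'}$ satisfies $\sum_{i\in S}(q_{k'})_i \le f_j(k')$, it suffices to show $\sum_{k'} d_{k'} f_j(k') \le f_j(k)$, which by Jensen's inequality follows from concavity of $f_j$ as a function of $k'$.

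The main obstacle is verifying this concavity. Since $f_j(k'+1)-f_j(k')=\mathbf{1}[r_{k'+1}\le j]$, concavity reduces to the claim that the row sequence $r_1 \le r_2 \le \cdots \le r_N$ from Definition~\ref{defn.partition_sequence} is nondecreasing. I would prove this inductively: any row $r<r_{k+1}$ is excluded at step $k+1$ because either (a) it has no remaining slack ($\lambda^{(k)}_r - \lambda_r \ge h(r-1)$) or (b) adding a box would violate the partition condition ($\lambda^{(k)}_r = \lambda^{(k)}_{r-1}$). Adding a box in row $r_{k+1}>r$ leaves rows $r-1,r$ of the partition unchanged, so both failure modes persist into step $k+2$ and forever thereafter. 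Hence $r_{k+2}\ge r_{k+1}$, the sequence is nondecreasing, $f_j$ is concave, and the proof is complete.
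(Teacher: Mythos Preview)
Your proof is correct and, in fact, more careful than the paper's. Both arguments use Lemma~\ref{lem.dominance}(a) together with Rado's containment criterion $\Newt(s_\alpha)\subseteq\Newt(s_\beta)\iff\alpha\unlhd\beta$ to identify the degree-$(|\lambda|+k)$ support of $G_{h,\lambda}$ with $\Newt(s_{\lambda^{(k)}})\cap\Z^m$, which immediately yields the inclusion $\Newt(s_{\lambda^{(k)}})\subseteq\Newt(G_{h,\lambda})\cap H_k$. The paper then asserts the reverse inclusion by declaring that ``$\Newt(G_{h,\lambda}(\x))\cap H_k$ is the convex hull of the content vectors of the partitions $\mu\in A(h,\lambda)$ such that $\mu\vdash|\lambda|+k$,'' but this is not automatic: a hyperplane slice of a convex hull need not equal the convex hull of the generators lying on that hyperplane. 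You supply the missing argument by showing that the partial-sum functions $f_j(k')=\lambda^{(k')}_1+\cdots+\lambda^{(k')}_j$ are concave in $k'$, which is exactly what is needed for a convex combination of points from different layers, landing at height $k$, to satisfy Rado's inequalities for $\lambda^{(k)}$. Your reduction of this concavity to the monotonicity $r_1\le r_2\le\cdots\le r_N$ of the row sequence from Definition~\ref{defn.partition_sequence}, together with the persistence argument establishing that monotonicity, are both correct. What your approach buys is a genuinely complete proof; what the paper's terse version buys is brevity at the cost of leaving this step to the reader.
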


\begin{proof}
For $k\geq0$, if $(p_1,\ldots, p_m) \in H_k$, then $\sum_{i=1}^m p_i = |\lambda|+k$; thus $\Newt(G_{h,\lambda}(\x))\cap H_k$ is the convex hull of the content vectors of the partitions $\mu\in A(h,\lambda)$ such that $\mu \vdash |\lambda|+k$.

A result of Rado~\cite[Proposition 2.5]{Rado} states that 
$$\Newt(s_\alpha(\x))\subseteq \Newt(s_\beta(\x)) \hbox{ if and only if } \alpha \,\unlhd\, \beta$$ for any two partitions $\alpha, \beta$.
By Lemma~\ref{lem.dominance}(a), since $\lambda^{(k)}$ dominates all partitions $\mu\in A(h,\lambda)$ such that $\mu \vdash |\lambda|+k$, then $\Newt(s_\mu(\x)) \subseteq \Newt(s_{\lambda^{(k)}}(\x))$, and we conclude that $H_k \cap \Newt(G_{h,\lambda}(\x)) = \Newt(s_{\lambda^{(k)}}(\x))$. 
\end{proof}

\begin{remark}
The intersection of $\Newt(G_{h,\lambda}(\x))$ with the hyperplane $H_k$ corresponds to the homogeneous component of $G_{h,\lambda}(\x)$ of degree $|\lambda|+k$.
\end{remark}

\begin{proposition}
\label{proposition_snp_of_sgp}
The inflated symmetric Grothendieck polynomial $G_{h,\lambda}(\x)$ has SNP.
\begin{proof}
In~\cite{Escobar-Yong}, the proof that $G_{1,\lambda}(\x)=G_\lambda(\x)$ has SNP does not depend on the inflation parameter $h$ other than in~\cite[Claim A]{Escobar-Yong}, which describes the structure of $\Newt(G_\lambda(\x))$ arising from the homogeneous components of $G_\lambda(\x)$.  Using the description of the homogeneous components of $G_{h,\lambda}(\x)$ from Proposition~\ref{proposition_layers_of_polytope}, the rest of the proof in \cite{Escobar-Yong} applies to arbitrary $h\in \mathbb{Z}_{\geq1}$ and shows that $G_{h,\lambda}(\x)$ has SNP. 
\end{proof}
\end{proposition}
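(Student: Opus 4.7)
The plan is to follow the template of Escobar and Yong~\cite{Escobar-Yong}, replacing their Claim A (which is $h$-specific) with our Proposition~\ref{proposition_layers_of_polytope}. Because each Schur polynomial $s_\mu(\x)$ is homogeneous of degree $|\mu|$, the coefficient of a monomial $\x^\alpha$ in $G_{h,\lambda}(\x)$ with $\sum_i \alpha_i = |\lambda|+k$ is captured entirely by the degree-$(|\lambda|+k)$ homogeneous component:
$$[\x^\alpha] G_{h,\lambda}(\x) \;=\; (-1)^k \sum_{\substack{\mu \in A(h,\lambda) \\ |\mu|=|\lambda|+k}} b_{h,\lambda\mu}\, K_{\mu,\alpha},$$
where $K_{\mu,\alpha}$ denotes the coefficient of $\x^\alpha$ in $s_\mu(\x)$ (a Kostka number).

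First I would fix an arbitrary lattice point $\alpha \in \Newt(G_{h,\lambda}(\x)) \cap \Z^m$ and use Proposition~\ref{proposition_layers_of_polytope} to locate it on the hyperplane $H_k$ with $k=\sum_i \alpha_i - |\lambda|$; the proposition tells us that $\alpha$ lies in $\Newt(s_{\lambda^{(k)}}(\x))$. Next I would note that in the displayed sum, all summands $b_{h,\lambda\mu}\, K_{\mu,\alpha}$ are non-negative, so sign cancellation is impossible. It then suffices to produce a single index $\mu$ contributing positively, and the natural choice is $\mu = \lambda^{(k)}$. Indeed, $\lambda^{(k)}\in A(h,\lambda)$ by Lemma~\ref{lem.dominance}(c), so $b_{h,\lambda,\lambda^{(k)}}>0$, and because Schur polynomials have SNP by~\cite[Proposition~2.5]{Monical-Tokcan-Yong} and $\alpha\in\Newt(s_{\lambda^{(k)}}(\x))$, the Kostka number $K_{\lambda^{(k)},\alpha}$ is positive. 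Therefore $[\x^\alpha]G_{h,\lambda}(\x)\neq 0$, which is exactly the SNP condition.

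The main obstacle in extending Escobar and Yong's argument from $h=1$ to arbitrary $h\ge 1$ is the structural statement that each horizontal slice $\Newt(G_{h,\lambda}(\x))\cap H_k$ is itself the Newton polytope of a single Schur polynomial, indexed by the dominating shape $\lambda^{(k)}$; this is exactly what the sequence from Definition~\ref{defn.partition_sequence}, Lemma~\ref{lem.dominance}, and Proposition~\ref{proposition_layers_of_polytope} provide. Once this layering is in place, the sign-coherence argument above uses nothing beyond SNP for Schur polynomials, so no further machinery is needed.
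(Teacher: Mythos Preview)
Your proposal is correct and follows essentially the same approach as the paper, which simply defers to the Escobar--Yong argument with Proposition~\ref{proposition_layers_of_polytope} substituted for their Claim~A. You have made the sign-coherence step explicit---namely, that within a fixed degree the coefficient is $(-1)^k$ times a sum of nonnegative terms $b_{h,\lambda\mu}K_{\mu,\alpha}$, with the $\mu=\lambda^{(k)}$ summand strictly positive---which is exactly the content of the cited argument.
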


We revisit Example~\ref{210example} from the viewpoint of dominating partitions.  (Here $h=1$.)

\begin{example} \label{eg.sgp21}
Let $\lambda =(2,1,0)\vdash 3$, $m=3$, and $\x=(x_1,x_2,x_3)$. Then
$$G_{(2,1,0)}(\x) = \textcolor{blue}{s_{(2,1,0)}(\x)} - \textcolor{green}{\left(s_{(2,2,0)}(\x) + 2s_{(2,1,1)}(\x)\right)} + \textcolor{purple}{2s_{(2,2,1)}(\x)} - \textcolor{orange}{s_{(2,2,2)}(\x)}.$$
The sequence of dominating partitions for $G_{(2,1,0)}(\x)$ is
\ytableausetup{smalltableaux}
$$\textcolor{blue}{\lambda^{(0)}=\ydiagram{2,1}}\quad 
\textcolor{green}{\lambda^{(1)}=\ydiagram{2,2}}\quad 
\textcolor{purple}{\lambda^{(2)}=\ydiagram{2,2,1}}\quad 
\textcolor{orange}{\lambda^{(3)}=\ydiagram{2,2,2}}
$$
\end{example}


\begin{figure}[ht!] 
\hspace{1.7cm}
\begin{overpic}[height=5.35cm, angle = 360]{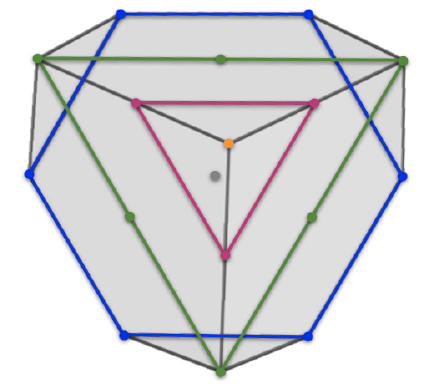}
	\put (100,3){\includegraphics[height=5.2cm]{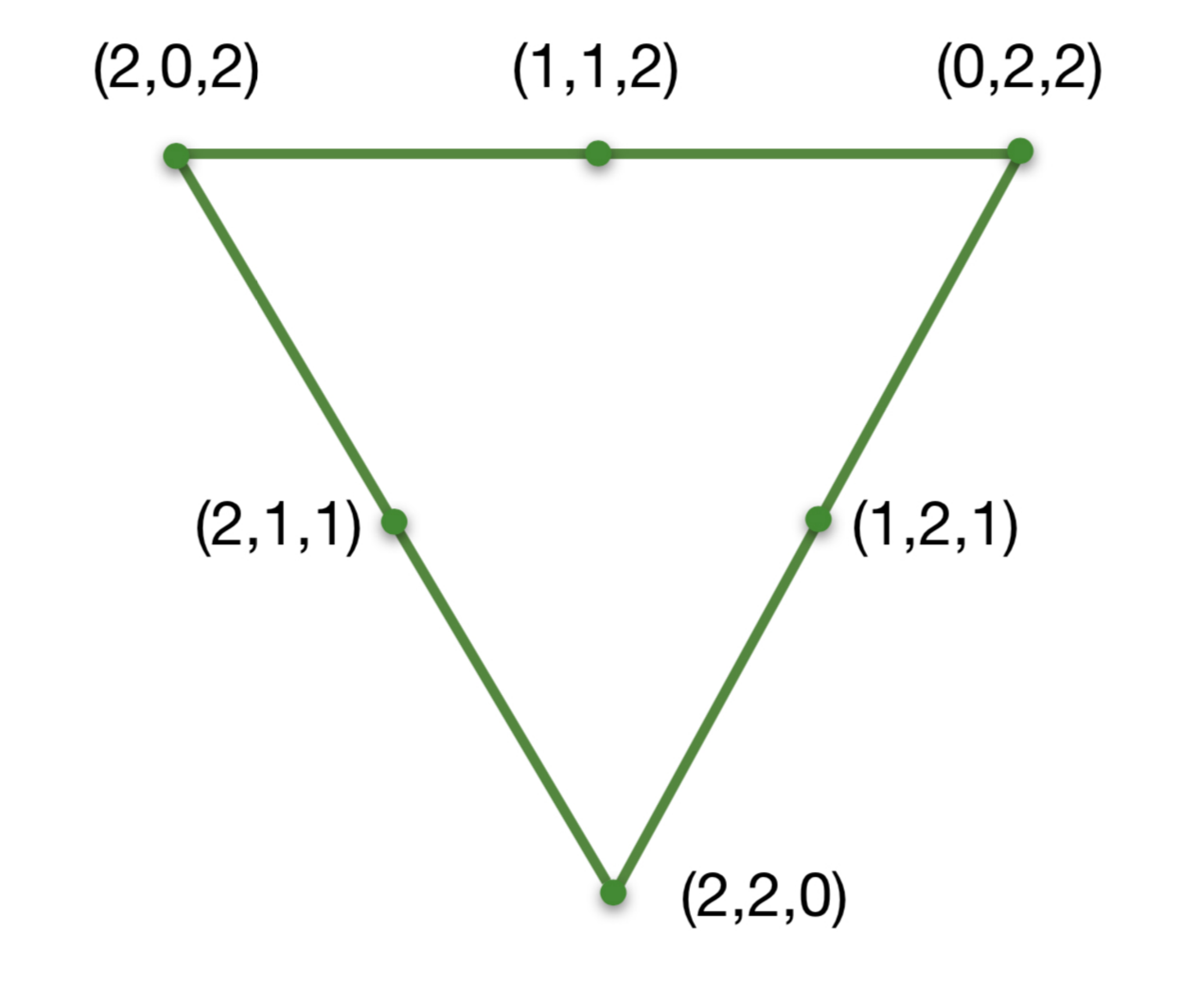}}
\end{overpic}
\caption{The lattice points of $\Newt(G_{(2,1,0)}(\mathbf{x}))$ are color-coded to reflect the structure arising from the four homogeneous components of $G_{(2,1,0)}(\mathbf{x})$; see Example~\ref{eg.sgp21}. The intersection of $\Newt(G_{(2,1,0)}(\mathbf{x}))$  with the hyperplane $x_1+x_2+x_3 =4$ is shown on the right. The extreme points are given by the $S_3$-orbit of the dominating partition $\lambda^{(1)}=(2,2,0)$.
}
\label{fig.Grothendieck}
\end{figure}

\subsubsection{Inflated symmetric Grothendieck polynomials and IDP}

We now show that the Newton polytopes of inflated symmetric Grothendieck polynomials have IDP. As a corollary, symmetric Grothendieck polynomials have IDP.

By Proposition~\ref{proposition_layers_of_polytope}, we know that $\Newt(G_{h,\lambda}(\x))$ is the convex hull of the $S_m$-orbits of the sequence of dominating partitions $\lambda^{(0)}, \ldots, \lambda^{(N)}$, but the next result shows that it suffices to take a certain subset of these partitions. To prove Proposition~\ref{proposition_vertices_of_sgp} we will need the following definition. 

\begin{definition}[Barvinok \cite{CourseInConvexity}]
Let $A \subseteq \mathbb{Z}^m$. A point $\mathbf{a} \in A$ is an \Def{extreme point} 
if $\mathbf{a} =  t\mathbf{b}+(1-t)\mathbf{c}$ for some $\mathbf{b}, \mathbf{c} \in A$ and $t\in (0,1)$ implies $\mathbf{b}=\mathbf{c}=\mathbf{a}$. 
\end{definition}
By the Minkowski--Weyl Theorem a polytope is the convex hull of the set of its 
extreme points (vertices). 
Thus, $\Newt(G_{h,\lambda}(\x))$ is the convex hull of its extreme points.


\begin{proposition}
\label{proposition_vertices_of_sgp}
Let $h$ be a positive integer, and let $\lambda$ be a partition with at most $m$ parts.  Suppose $\deg G_{h,\lambda}(\x) = |\lambda|+N$, and let $\lambda^{(0)},\ldots, \lambda^{(N)}$ be the sequence of dominating partitions for $G_{h,\lambda}(\x)$.
Suppose $\lambda^{(N)} =(\lambda_1+a_1, \ldots, \lambda_m+a_m)$
for some nonnegative integers $a_1,\ldots, a_m$, and let $b_k = a_1+\cdots+ a_k$ for $k=1,\ldots ,m$.
Then 
$$\Newt(G_{h,\lambda}(\x)) = \mathrm{conv} \bigcup_{k=1}^m \Newt(s_{\lambda^{(b_k)}}(\x))$$ 
is the convex hull of the $S_m$-orbits of the partitions $\lambda^{(b_1)}, \ldots, \lambda^{(b_m)}$.
Moreover, $\lambda^{(b_1)} = \lambda^{(0)} =\lambda$ and $\lambda^{(b_m)} = \lambda^{(N)}$.
\end{proposition}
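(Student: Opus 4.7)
The plan is to refine Proposition~\ref{proposition_layers_of_polytope}, which expresses $\Newt(G_{h,\lambda}(\x))$ as the convex hull of the union of the $N+1$ permutohedra $\Newt(s_{\lambda^{(k)}}(\x))$ for $k = 0, \ldots, N$, by showing that only the $m$ permutohedra indexed by $b_1,\ldots,b_m$ are extremal. The pivotal observation is that the dominating sequence is produced row-by-row from the top. Concretely, since $h(r-1) = 0$ when $r = 1$, the defining inequality in Definition~\ref{defn.partition_sequence} cannot hold for $r = 1$, so no box is ever added to row~1, and consequently $a_1 = 0$ and $b_1 = 0$, giving $\lambda^{(b_1)} = \lambda^{(0)} = \lambda$. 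Once the construction first adds a box to some row $j \geq 2$, every smaller row $i < j$ is blocked by either $\lambda^{(b_{j-1})}_i - \lambda_i = h(i-1)$ or $\lambda^{(b_{j-1})}_i = \lambda^{(b_{j-1})}_{i-1}$; since adding a box to row $j$ leaves both of these quantities unchanged, such rows remain ineligible. Setting $b_0 := 0$, this means the $a_j$ boxes destined for row $j$ are added consecutively at steps $b_{j-1}+1, \ldots, b_j$, yielding
\[
\lambda^{(k)} = \lambda^{(b_{j-1})} + (k - b_{j-1})\,e_j \quad \text{for all } b_{j-1} \leq k \leq b_j,
\]
where $e_j$ denotes the $j$-th standard basis vector in $\mathbb{R}^m$. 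In particular $b_m = a_1 + \cdots + a_m = N$, so $\lambda^{(b_m)} = \lambda^{(N)}$.

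With this structure in hand, for any $j$ with $a_j > 0$ and any $k$ with $b_{j-1} < k < b_j$, the identity above yields $\lambda^{(k)} = (1-t)\,\lambda^{(b_{j-1})} + t\,\lambda^{(b_j)}$ with $t = (k-b_{j-1})/a_j \in (0,1)$. Because the $S_m$-action on $\mathbb{R}^m$ is linear, the same convex combination expresses every $\sigma \cdot \lambda^{(k)}$ in terms of $\sigma \cdot \lambda^{(b_{j-1})}$ and $\sigma \cdot \lambda^{(b_j)}$. Since $\Newt(s_\mu(\x))$ is the convex hull of the $S_m$-orbit of $\mu$ for every partition $\mu$, each such vertex (and hence all of $\Newt(s_{\lambda^{(k)}}(\x))$) lies in $\conv\bigl(\Newt(s_{\lambda^{(b_{j-1})}}(\x)) \cup \Newt(s_{\lambda^{(b_j)}}(\x))\bigr)$. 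Combining with Proposition~\ref{proposition_layers_of_polytope} then lets us discard all layers with indices outside $\{b_1, \ldots, b_m\}$, yielding
\[
\Newt(G_{h,\lambda}(\x)) = \conv\bigcup_{k=1}^m \Newt(s_{\lambda^{(b_k)}}(\x)),
\]
which is equivalently the convex hull of the $S_m$-orbits of $\lambda^{(b_1)}, \ldots, \lambda^{(b_m)}$.

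The main obstacle is the first step: making the row-by-row description of the dominating sequence rigorous while carefully handling the two distinct reasons a row can become ineligible (reaching the inequality cap $h(r-1)$ versus the partition-shape obstruction $\lambda^{(k-1)}_r = \lambda^{(k-1)}_{r-1}$). Once this bookkeeping is in place, the containment of intermediate layers in the convex hull of their endpoints is an immediate consequence of the one-parameter convex-combination identity and the linearity of the symmetric-group action.
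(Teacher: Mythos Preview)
Your proof is correct and follows essentially the same route as the paper's. Both arguments rest on the row-by-row structure $\lambda^{(b_i)} = (\lambda_1+a_1,\ldots,\lambda_i+a_i,\lambda_{i+1},\ldots,\lambda_m)$ and then express each intermediate $\lambda^{(k)}$ as a convex combination of neighbors; the only cosmetic difference is that you use the endpoints $\lambda^{(b_{j-1})},\lambda^{(b_j)}$ with $t=(k-b_{j-1})/a_j$, whereas the paper uses the immediate neighbors $\lambda^{(k-1)},\lambda^{(k+1)}$ with $t=\tfrac12$ (and additionally verifies that the $\lambda^{(b_i)}$ are genuinely extreme, which the statement as written does not require).
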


\begin{proof}
We shall show that for $k=0,\ldots, N$, $\lambda^{(k)}$ is an extreme point of $\Newt(G_{h,\lambda}(\x))$ only if $k\in \{b_1,\ldots, b_m\}$.

If $\p=(p_1,\ldots,p_m)$ is an extreme point of $\Newt(G_{h,\lambda}(\x))$, then any permutation of $\p$ is also an extreme point of $\Newt(G_{h,\lambda}(\x))$, since $\Newt(G_{h,\lambda}(\x))$ is the convex hull of the $S_m$-orbits of $
\lambda^{(0)},\ldots,\lambda^{(N)}$ by Proposition~\ref{proposition_layers_of_polytope}. 

As $\lambda^{(N)} = (\lambda_1+a_1, \ldots, \lambda_m+a_m),$ then the largest number of boxes that can be added to the $r$-th row of $\lambda$ is $a_r$. 
Thus
$\lambda_r \leq \lambda^{(k)}_r \leq \lambda_r+a_r$
for each $k=0,\ldots, N$. By construction, 
$$\lambda^{(b_i)} = (\lambda_1 + a_1, \ldots, \lambda_i + a_i, \lambda_{i+1},\ldots, \lambda_m) $$
for each $i=1,\ldots,m$, so that each part of the partition is either at a maximum or a minimum.  
Thus if $\mu,\nu\in \Newt(G_{h,\lambda}(\x))$ are lattice points such that 
$t\mu+(1-t)\nu = \lambda^{(b_i)}$, then $\mu = \nu = \lambda^{(b_i)}$ necessarily.
So $\{\lambda^{(b_1)}, 
\ldots, \lambda^{(b_m)}\}$ is a set of extreme points of $\Newt(G_{h,\lambda}(\x))$.

On the other hand, suppose $k\notin \{b_1,\ldots, b_m\}$.  Then there exists $j$ such that
$$\lambda^{(k)} = (\lambda_1+a_1,\ldots, \lambda_{j-1}+a_{j-1}, \lambda_j + c , \lambda_{j+1},\ldots, \lambda_m) $$
with $0<c<a_j$. In this case, we have
\begin{align*}
\lambda^{(k-1)} 
	&= (\lambda_1+a_1,\ldots, \lambda_{j-1}+a_{j-1}, \lambda_j+c-1, \lambda_{j+1},\ldots, \lambda_m), \\
\lambda^{(k+1)} 
	&= (\lambda_1+a_1,\ldots, \lambda_{j-1}+a_{j-1}, \lambda_j+c+1, \lambda_{j+1},
\ldots, \lambda_m),
\end{align*}
so $\lambda^{(k)} = \frac12(\lambda^{(k-1)}+\lambda^{(k+1)})$.
Thus $\lambda^{(k)}$ is an extreme point of $\Newt(G_{h,\lambda}(\x))$ if and only if $k\in \{b_1,\ldots, b_m\}$.

Lastly, $a_1=0$ and $N=a_1+\cdots+a_m$, so $\lambda^{(b_1)} = \lambda^{(0)}=\lambda$ and $\lambda^{(b_m)} = \lambda^{(N)}$.
\end{proof}

\begin{example}\label{eg.210a}
Let $\lambda =(2,1,0)\vdash 3$ and $m=3$. 
Then $\Newt(G_{\lambda}(\x))$ is the convex hull of the $S_3$-orbit of 
$$\textcolor{blue}{\lambda^{(0)} = \ydiagram{2,1}}\quad
\textcolor{green}{\lambda^{(1)} = \ydiagram{2,2}} \quad\hbox{and}\quad
\textcolor{orange}{\lambda^{(3)} = \ydiagram{2,2,2}}$$
In Figure~\ref{fig.Grothendieck}, we see that that $\lambda^{(2)}=(2,2,1)$ is not an extreme point of the Newton polytope.
\end{example}

Recall that for a symmetric Grothendieck polynomial $G_{\lambda}(\x)$ we saw that $t\Newt(G_{\lambda}(\x))$ $\neq \Newt(G_{t\lambda}(\x)).$ Inflated symmetric Grothendieck polynomials are defined to address this discrepancy. 

\begin{proposition} \label{prop.dilate}
Let $t$ be a positive integer. Then
$$t\Newt(G_{h,\lambda}(\x)) = \Newt(G_{th,t\lambda}(\x)).$$
\end{proposition}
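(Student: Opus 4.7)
The plan is to reduce the identity to a comparison of vertex sets using Proposition~\ref{proposition_vertices_of_sgp}. Since scaling commutes with convex hulls, the vertices of $t\Newt(G_{h,\lambda}(\x))$ are precisely the $S_m$-orbits of $t\lambda^{(b_1)},\ldots,t\lambda^{(b_m)}$. My target is therefore to show that the sequence of dominating partitions $\mu^{(0)},\mu^{(1)},\ldots$ for $G_{th,t\lambda}(\x)$ produces exactly these same points as its distinguished extreme content vectors; the full equality of the two polytopes will then follow by applying Proposition~\ref{proposition_vertices_of_sgp} to $G_{th,t\lambda}(\x)$.

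First I would establish a min-recursion for the terminal partition $\lambda^{(N)}$ in the sequence for $G_{h,\lambda}(\x)$: namely, $\lambda^{(N)}_1 = \lambda_1$ and
\[
\lambda^{(N)}_r = \min\bigl(\lambda_r + h(r-1),\; \lambda^{(N)}_{r-1}\bigr) \quad \text{for } r \geq 2.
\]
This follows from the ``smallest admissible row first'' rule in Definition~\ref{defn.partition_sequence}: row $1$ never receives a box (since $h(1-1)=0$), so the greedy algorithm completely fills row $2$ up to $\min(\lambda_2+h,\lambda_1)$ before touching row $3$, then fills row $3$ up to $\min(\lambda_3+2h,\lambda^{(N)}_2)$, and so on. The quota $h(r-1)$ and the staircase bound $\lambda^{(N)}_r \leq \lambda^{(N)}_{r-1}$ are the only possible stopping conditions.

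Applying the same min-recursion to $G_{th,t\lambda}(\x)$ yields $\mu^{(N')}_r = \min\bigl(t\lambda_r + th(r-1),\; \mu^{(N')}_{r-1}\bigr)$, and a one-line induction on $r$ gives $\mu^{(N')} = t\lambda^{(N)}$. Consequently $a'_r = ta_r$ and $b'_k = tb_k$ for every $r$ and $k$. The explicit description of the intermediate extreme vertices in Proposition~\ref{proposition_vertices_of_sgp} then gives
\[
\mu^{(b'_k)} = (t\lambda_1+ta_1,\ldots,t\lambda_k+ta_k,t\lambda_{k+1},\ldots,t\lambda_m) = t\lambda^{(b_k)},
\]
and comparing vertex sets closes the argument.

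The main obstacle is justifying the min-recursion itself: the dominating sequence is defined by a sequential greedy procedure, so a priori one must verify that its output agrees with the componentwise maximum partition subject to the row-quota and staircase constraints. Once that structural observation is established, the rest of the proof is essentially the commutativity of scaling by $t$ with the min-recursion, which is immediate.
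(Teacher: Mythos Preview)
Your proof is correct and follows the same overall skeleton as the paper's: both reduce to showing $t\lambda^{(N)} = (t\lambda)^{(N')}$ (equivalently $a'_r = ta_r$ for all $r$) and then invoke Proposition~\ref{proposition_vertices_of_sgp} to match vertex sets. The difference is in how that row-by-row equality is established. The paper argues by contradiction: it first observes $ta_r \le a'_r$, then assumes $ta_r < a'_r \le th(r-1)$, deduces $a_r < h(r-1)$, and uses the fact that in this case rows $r-1$ and $r$ of $\lambda^{(N)}$ must coincide to contradict the partition condition on $(t\lambda)^{(N')}$ via the inductive hypothesis. Your min-recursion $\lambda^{(N)}_r = \min(\lambda_r + h(r-1),\,\lambda^{(N)}_{r-1})$ is exactly the structural fact driving that contradiction, but you isolate it as a lemma up front; once stated, the scaling identity is immediate since $\min$ commutes with multiplication by $t>0$. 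Your packaging is somewhat more transparent and avoids the separate ``$ta_r \le a'_r$'' step, at the cost of having to justify (as you note) that the greedy procedure of Definition~\ref{defn.partition_sequence} really does saturate the rows in order---which is true because row $1$ is frozen and each row can only be blocked by the row above it, which is itself already frozen by the time we reach it.
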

\begin{proof}
Let $\mathcal{P} = \Newt(G_{h,\lambda}(\x))$ and $\mathcal{Q} = \Newt(G_{th,t\lambda}(\x))$. 
Also let $\deg G_{h,\lambda}(\x)= |\lambda|+N$ while $\deg G_{th, t\lambda}(\x) = |t\lambda|+N'$.
By Proposition~\ref{proposition_vertices_of_sgp}, the vertices of $t\mathcal{P}$ and $\mathcal{Q}$ are determined by the partitions $t\lambda^{(N)}\vdash t|\lambda|+tN$ and $(t\lambda)^{(N')}\vdash t|\lambda|+N'$, respectively, so it suffices to show that $t\lambda^{(N)} = (t\lambda)^{(N')}$.  
Furthermore by Lemma~\ref{lem.dominance}(b), $(t\lambda)^{(N')}$ is the unique partition $\mu\in A(th,t\lambda)$ such that $b_{th,t\lambda\mu}\neq 0$ in the definition of the inflated symmetric Grothendieck polynomial $G_{th,t\lambda}(\x)$
and $|\mu|\vdash t|\lambda|+N'$, so it suffices to show that $t\lambda^{(N)} \vdash t|\lambda|+N'$.
Suppose
\begin{align*}
\lambda^{(N)} &= (\lambda_1+a_1,\ldots, \lambda_m+a_m),\\
(t\lambda)^{(N')} &= (t\lambda_1+a_1',\ldots, t\lambda_m+a_m').
\end{align*}
We shall show via induction that $ta_r = a_r'$ for $r=1,\ldots, m$. The base case is $r=1$, where by definition, $a_1' = 0 = a_1 = ta_1.$ Assume that $a_j' = ta_j$ for all $j<r$.

If $\lambda$ has a maximum of $a_r$ addable boxes in its $r$-th row, then at least $ta_r$ boxes can be added to the $r$-th row of $t\lambda$, and so $ta_r \leq a_r'$ for $r=1,\ldots, m$. 

Suppose $ta_r < a_r' \leq th(r-1)$.  Then $a_r < h(r-1)$ implies that the $(r-1)$-th and $r$-th rows of $\lambda^{(N)}$ have the same length.  In other words, $\lambda_{r-1}+a_{r-1} = \lambda_r + a_r$.  But by the induction hypothesis,
\begin{align*}
(t\lambda)_{r-1}^{(N')} 
	&= t\lambda_{r-1} +a_{r-1}'
	= t\lambda_{r-1} +ta_{r-1}
	= t\lambda_{r} +ta_{r}
	< t\lambda_r + a_r'
	= (t\lambda)_r^{(N)},
\end{align*}
which contradicts the fact that $(t\lambda)^{(N)}$ is a partition.
Therefore, $ta_r = a_r'$ for $r=1,\ldots,m$.  Since $t\lambda^{(N)} \vdash t|\lambda|+N'$, it follows from Lemma~\ref{lem.dominance}(b) that $t\lambda^{(N)}=(t\lambda)^{(N')}$.
The result follows.
\end{proof}

\begin{theorem}
Let $\lambda$ be a partition with at most $m$ parts and let $\x = (x_1,\ldots, x_m)$. Then the Newton polytope $\Newt(G_{h,\lambda}(\x))$ has the integer decomposition property.
\end{theorem}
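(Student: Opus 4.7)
My plan is to adapt the SSYT-based argument of Proposition~\ref{thm:Schur_IDP} by exploiting the layered structure of $\Newt(G_{h,\lambda}(\x))$ from Proposition~\ref{proposition_layers_of_polytope}. Let $\p \in t\Newt(G_{h,\lambda}(\x)) \cap \mathbb{Z}^m$. By Proposition~\ref{prop.dilate}, $t\Newt(G_{h,\lambda}(\x)) = \Newt(G_{th,t\lambda}(\x))$, and if $\p$ has coordinate sum $t|\lambda| + k$ for some $0 \leq k \leq tN$, then $\p \in \Newt(s_{(t\lambda)^{(k)}}(\x))$. Since inflated symmetric Grothendieck polynomials have SNP by Proposition~\ref{proposition_snp_of_sgp}, $\p$ is the content vector of an SSYT $T$ of shape $(t\lambda)^{(k)}$.

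The crux of the argument, and the main obstacle, is the combinatorial identity $(t\lambda)^{(k)} = r\lambda^{(q+1)} + (t-r)\lambda^{(q)}$ (as vectors in $\mathbb{Z}^m$), where $k=qt+r$ with $0\leq r<t$. Since $\lambda^{(q+1)}$ is obtained from $\lambda^{(q)}$ by adding one box in row $s := r_{q+1}$, this is equivalent to saying $(t\lambda)^{(k)}$ equals $t\lambda^{(q)}$ with $r$ additional boxes placed in row $s$. I would prove this by induction on $k$, with base case $(t\lambda)^{(0)} = t\lambda^{(0)}$. For the inductive step, I verify that the greedy construction of $(t\lambda)^{(k+1)}$ from $(t\lambda)^{(k)}$ adds its box in row $s$: the capacity inequality $(t\lambda)^{(k)}_s - t\lambda_s = t(\lambda^{(q)}_s - \lambda_s) + r < th(s-1)$ and the partition-validity inequality $(t\lambda)^{(k)}_s + 1 \leq t\lambda^{(q)}_{s-1}$ both follow from the corresponding conditions that permitted the box addition when passing from $\lambda^{(q)}$ to $\lambda^{(q+1)}$, using $r \leq t-1$. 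Moreover, any row $i<s$ is blocked in $(t\lambda)^{(k)}$ for exactly the same reason it was blocked at step $q+1$ for $\lambda$: since $(t\lambda)^{(k)}_i = t\lambda^{(q)}_i$ for $i \neq s$, capacity saturation or partition equality at row $i$ in $\lambda^{(q)}$ transfers directly to $(t\lambda)^{(k)}$.

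Once the identity is established, the decomposition proceeds just as in the Schur case: let $T_i$ consist of the columns of $T$ whose position is congruent to $i$ modulo $t$, for $i=1,\ldots,t$. Each $T_i$ is an SSYT since its columns are full columns of $T$. Because $(t\lambda)^{(k)}$ differs from $t\lambda^{(q)}$ only by having $r$ extra boxes in row $s$---which converts exactly the $r$ columns at positions $t\lambda^{(q)}_s+1,\ldots,t\lambda^{(q)}_s+r$ from length $s-1$ to length $s$---a direct column-length count shows that $T_i$ has shape $\lambda^{(q+1)}$ for $i=1,\ldots,r$ and shape $\lambda^{(q)}$ for $i=r+1,\ldots,t$. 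The content vectors of $T_1,\ldots,T_t$ sum to $\p$, and each lies in $\Newt(s_{\lambda^{(q)}}(\x)) \cup \Newt(s_{\lambda^{(q+1)}}(\x)) \subseteq \Newt(G_{h,\lambda}(\x))$ by Proposition~\ref{proposition_layers_of_polytope}, establishing IDP.
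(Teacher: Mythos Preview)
Your proof is correct and shares the paper's overall strategy: reduce to $\Newt(G_{th,t\lambda}(\x))$ via Proposition~\ref{prop.dilate}, realise $\p$ as the content of an SSYT $T$ of shape $(t\lambda)^{(k)}$ via SNP, and split $T$ into $t$ tableaux $T_1,\ldots,T_t$ by taking columns congruent to $i$ modulo $t$. The difference lies in how you justify that the shapes of the $T_i$ land in $\Newt(G_{h,\lambda}(\x))$. You prove the sharp identity $(t\lambda)^{(k)}=r\lambda^{(q+1)}+(t-r)\lambda^{(q)}$ by an induction tracking the greedy construction, and then read off the exact shapes $\lambda^{(q)}$ or $\lambda^{(q+1)}$ for each $T_i$. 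The paper instead avoids computing the shapes precisely: it observes only that each resulting shape $\theta(i)$ satisfies $\lambda\subseteq\theta(i)\subseteq\lambda^{(N)}$ (a direct consequence of $t\lambda\subseteq(t\lambda)^{(k)}\subseteq t\lambda^{(N)}$), invokes Lemma~\ref{lem.dominance}(c) to conclude $\theta(i)\in A(h,\lambda)$, and then uses dominance to place the content vector in the appropriate $\Newt(s_{\lambda^{(k_i)}}(\x))$. Your route yields finer structural information (the exact shapes), at the cost of the inductive lemma; the paper's route is shorter and leans more on Lemma~\ref{lem.dominance}.
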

\begin{proof}
Let $\mathcal{P} = \Newt(G_{h,\lambda}(\x))$ and $\mathcal{Q} = \Newt(G_{th,t\lambda}(\x))$.  
By Proposition~\ref{prop.dilate}, $t\mathcal{P} = \mathcal{Q}$.

Let $\nu = t\lambda$ and let $\p$ be a lattice point in the $t$-th dilate $t\mathcal{P} = \mathcal{Q}$.  The polynomial $G_{th,t\lambda}(\x)$ has SNP, and by Proposition~\ref{proposition_vertices_of_sgp}, the point $\p$ is a lattice point in $\Newt(s_{\nu^{(k)}})$ for some $k$, so $\p$ is the content vector of a semistandard Young tableau $T$ of shape $\nu^{(k)} \in A(th,t\lambda)$. 
The tableau $T$ decomposes into $t$ semistandard Young tableaux $T_1,\ldots, T_t$, where the tableau $T_i$ of shape $\theta(i)$ is obtained by taking the $j$-th columns of $T$ for $j \equiv i \mod t$.  We shall show that the partitions $\theta(1),\ldots, \theta(t)$ are in $A(h,\lambda)$.  
By Lemma~\ref{lem.dominance}(c), it suffices to show that $\lambda \subseteq \theta(i) \subseteq \lambda^{(N)}$, where $\deg G_{h,\lambda}(\x) = |\lambda|+N$.

The tableau $T_i$ is comprised of every $t$-th column of $T$, so its shape is
$$\theta(i)=(\lambda_1 + \ell_{i,1}, \ldots, \lambda_m + \ell_{i,m}) $$
for some nonnegative integers $\ell_{i,1},\ldots, \ell_{i,m}$.
Thus $\theta(i)\supseteq \lambda$.

Now, $\nu^{(k)}\in A(th,t\lambda)$, so Proposition~\ref{prop.dilate} gives
$$0\leq \nu_r^{(k)} - t\lambda_r  \leq ta_r,$$
for each $r=1,\ldots, m$, where $\lambda^{(N)} = (\lambda_1 +a_1,\ldots, \lambda_m+a_m)$.
Again, since the partition $\theta(i)$ is comprised of every $t$-th column $\nu^{(k)}$, then this implies
$$0 \leq \theta(i)_r - \lambda_r \leq a_r $$
for each $r=1,\ldots, m$.  Therefore, $\theta(i)\subseteq \lambda^{(N)}$.  So by Lemma~\ref{lem.dominance}(c), $\theta(i)\in A(h,\lambda)$.

Finally, let $k_i = \ell_{i,1}+\cdots+\ell_{i,m}$. Since $\theta(i)\vdash |\lambda|+k_i$ is a partition in $A(h,\lambda)$, then $\lambda^{(k_i)} \unrhd \theta(i)$.
If $\bv_i$ denotes the content vector of $T_i$, then $\bv_i$ is a lattice point in $\Newt(s_{\lambda^{(k_i)}}(\x))$, since $\Newt(s_{\lambda^{(k_i)}}(\x))$ has IDP.
As $\p = \bv_1 + \cdots + \bv_t$, then $\Newt(G_{h,\lambda}(\x))$ has IDP.
\end{proof}

\begin{corollary}
The Newton polytope $\Newt(G_\lambda(\x))$ has the integer decomposition property.
\end{corollary}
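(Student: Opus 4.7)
The plan is to observe that this corollary is an immediate specialization of the theorem that was just established for inflated symmetric Grothendieck polynomials. Specifically, the preceding theorem asserts that $\Newt(G_{h,\lambda}(\x))$ has the integer decomposition property for every positive integer $h$ and every partition $\lambda$ with at most $m$ parts. By the remark following the definition of inflated symmetric Grothendieck polynomials, setting $h=1$ recovers the usual symmetric Grothendieck polynomial: $G_{1,\lambda}(\x) = G_\lambda(\x)$.

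Therefore I would simply set $h=1$ in the previous theorem. Concretely, I would write one short sentence invoking the theorem applied to $h=1$ together with the identification $G_{1,\lambda}(\x)=G_\lambda(\x)$ given in the remark after the definition of the inflated polynomial. No separate argument is needed, since the generalization from $G_\lambda$ to $G_{h,\lambda}$ was designed precisely so that the IDP proof goes through uniformly in $h$, and the case $h=1$ is covered at no extra cost.

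There is no substantive obstacle here, so I do not anticipate any hard step. The only thing to be careful about is to cite the correct previous results: the theorem giving IDP for $\Newt(G_{h,\lambda}(\x))$, together with the remark identifying $G_{1,\lambda}(\x)$ with $G_\lambda(\x)$. The proof is therefore a one-line reduction, and I would present it as such rather than repeating the entire column-decomposition argument that underlies the proof of the theorem.
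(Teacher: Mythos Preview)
Your proposal is correct and matches the paper's approach exactly: the paper states this corollary immediately after the theorem on $\Newt(G_{h,\lambda}(\x))$ without any separate proof, since it is the special case $h=1$ via the identification $G_{1,\lambda}(\x)=G_\lambda(\x)$.
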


\section{Reflexivity}

\subsection{Reflexive and Gorenstein Newton polytopes of Schur polynomials} \label{sec:Reflexive_Schur}

We wish to characterize the Newton polytopes arising from Schur polynomials that
are reflexive.  
These polytopes are $(m-1)$-dimensional polytopes in $\R^m$;
they are contained in the hyperplane $\displaystyle \sum_{i=1}^m x_i = m$.
If we project onto the first $m-1$ coordinates, and then translate by
$(-1,-1,\dots, -1)$, we get an $(m-1)$-dimensional polytope in $\R^{m-1}$,
with $\mathbf{0}$ in its interior.
 So we can apply
the equivalent condition for reflexivity observed in Section~\ref{Ehrhart}
directly to the Newton polytope in $\R^m$.  That is, to show the
Newton polytope is reflexive we show there is a unique lattice point 
in the relative interior and it is lattice distance 1 from each facet.
Thus, to classify the reflexive $\lambda$-permutohedra,
 we first identify the facet-defining hyperplanes.

In this section we will often abuse terminology by using ``interior'' to
mean ``relative interior'' of a polytope, when the affine span of the 
polytope is clear.

Let $\lambda$ be a partition with at most $m$ parts and let $\x=(x_1,\ldots, x_m)$.
Recall that the Newton polytope $\Newt(s_\lambda(\x))$ is the $\lambda$-permutohedron $\mathcal{P}^m_\lambda$, which is the convex hull of the $S_m$-orbit of $(\lambda_1,\ldots, \lambda_m)$ in $\mathbb{R}^m$.
This polytope is of dimension $m-1$, and is determined by Rado's inequalities~\cite{Rado}:
$$\sum_{i\in I} x_i \leq  \sum_{i=1}^{|I|} \lambda_i \hbox{ for all }  I\subseteq [m], \qquad\hbox{ and }\qquad \sum_{i=1}^m x_i = |\lambda|.$$

Note that whether one of Rado's inequalities is facet-defining depends only on $|I|$ and $\lambda$, and not on the set $I$ itself.

\begin{theorem} \label{prop:facets}
The facets of $\mathcal{P}_\lambda^m$ are determined by the following inequalities.
\begin{enumerate}
\item[(a)] For all $i=1,\ldots, m$, $x_i\leq \lambda_1$, unless $\lambda_2 = \lambda_3 = \cdots =\lambda_m$.
\item[(b)] For $2\le |I|\le m-2$,
      $$\sum_{i\in I} x_i \leq \sum_{i=1}^{|I|}\lambda_i,$$ 
      unless
       $\lambda_1 = \lambda_2 = \cdots =\lambda_{|I|}$ or
       $\lambda_{|I|+1} = \lambda_{|I|+2} = \cdots =\lambda_{m}$. 
\item[(c)] For $|I|=m-1$,
      $$\sum_{i\in I} x_i \leq \sum_{i=1}^{m-1}\lambda_i,$$ 
      unless $\lambda_1 = \lambda_2 = \cdots =\lambda_{m-1}$.
\end{enumerate}
\end{theorem}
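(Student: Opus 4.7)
The plan is to determine, for each $I \subseteq [m]$ with $k = |I|$, whether the face
\[F_I \coloneqq \Bigl\{\x \in \mathcal{P}_\lambda^m : \sum_{i \in I} x_i = \sum_{j=1}^{k} \lambda_j\Bigr\}\]
has dimension $m - 2$ (equivalently, is a facet). Since the vertices of $\mathcal{P}_\lambda^m$ are precisely the permutations of $(\lambda_1,\ldots,\lambda_m)$, and $\sum_{i\in I} \pi_i$ is maximized exactly when the $k$ largest entries of $\pi$ sit at positions in $I$, the vertices of $F_I$ are the permutations $\pi$ of $\lambda$ whose entries at positions in $I$ form the multiset $\{\lambda_1,\ldots,\lambda_k\}$.

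Next, I would show that $F_I$ decomposes as a product of two smaller permutohedra. Let $v = \lambda_k$, let $a$ be the number of indices $j$ with $\lambda_j > v$, and let $b$ be the largest index with $\lambda_b = v$, so $0 \leq a < k \leq b \leq m$. Let $\mu$ be the partition consisting of $(\lambda_1,\ldots,\lambda_a)$ followed by $k-a$ copies of $v$, and let $\nu$ be the partition consisting of $b-k$ copies of $v$ followed by $(\lambda_{b+1},\ldots,\lambda_m)$. A comparison of vertex sets then yields
\[F_I = \mathcal{P}^I_\mu \times \mathcal{P}^{[m]\setminus I}_\nu,\]
where $\mathcal{P}^I_\mu$ denotes the $\mu$-permutohedron on coordinates indexed by $I$. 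The point is that every vertex of $F_I$ must place the multiset $\mu$ at the positions in $I$ and the complementary multiset $\nu$ at positions in $[m]\setminus I$, and conversely any such placement gives a vertex.

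To finish, I would compute dimensions. The $\alpha$-permutohedron of a partition $\alpha$ of length $d$ has dimension $d-1$ when its parts are not all equal and dimension $0$ otherwise, so $\dim F_I = (k-1) + (m-k-1) = m-2$ precisely when both $\mu$ and $\nu$ have at least two distinct parts. The parts of $\mu$ are all equal iff $a = 0$, i.e., $\lambda_1 = \cdots = \lambda_k$; the parts of $\nu$ are all equal iff $b = m$, i.e., $\lambda_{k+1} = \cdots = \lambda_m$. For $k=1$, the first condition is vacuous since $\mu = (\lambda_1)$ is already a single part, so only the second matters, giving case (a). For $k = m-1$, symmetrically only the first matters, giving case (c). For $2 \leq k \leq m-2$, both conditions are meaningful, giving case (b).

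The main technical step is the product decomposition in the mixed case $\lambda_k = \lambda_{k+1}$: here the naive guess $\mathcal{P}^I_{(\lambda_1,\ldots,\lambda_k)} \times \mathcal{P}^{[m]\setminus I}_{(\lambda_{k+1},\ldots,\lambda_m)}$ has the correct total dimension but uses multisets that are not the ones actually realized on $F_I$ (the copies of $v$ split across both sides in a specific way dictated by the support of a vertex). The explicit $\mu$ and $\nu$ above handle this correctly because $\mu \cup \nu$ equals the multiset of $\lambda$ with $k-a$ copies of $v$ on one side and $b-k$ on the other, matching exactly the vertex data of $F_I$.
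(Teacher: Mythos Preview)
Your argument is correct and follows essentially the same route as the paper: identify the vertices of $F_I$ as those permutations placing the multiset $\{\lambda_1,\ldots,\lambda_k\}$ on the coordinates in $I$, recognise $F_I$ as a product of two smaller permutohedra, and read off the dimension. The paper's proof is a one-paragraph version of exactly this.

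One point of confusion worth flagging: your final paragraph is unnecessary. Your carefully constructed $\mu$ and $\nu$ are literally equal, as multisets (hence as partitions), to $(\lambda_1,\ldots,\lambda_k)$ and $(\lambda_{k+1},\ldots,\lambda_m)$. Indeed, since $\lambda_{a+1}=\cdots=\lambda_k=v$ and $\lambda_{k+1}=\cdots=\lambda_b=v$, your $\mu$ is $(\lambda_1,\ldots,\lambda_a,v,\ldots,v)=(\lambda_1,\ldots,\lambda_k)$ and likewise for $\nu$. So the ``naive guess'' is already correct; there is no subtlety in the case $\lambda_k=\lambda_{k+1}$, because the multiset of the $k$ largest parts of $\lambda$ is uniquely determined even when ties occur. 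The paper simply writes the decomposition in terms of $(\lambda_1,\ldots,\lambda_{|I|})$ and $(\lambda_{|I|+1},\ldots,\lambda_m)$ without further comment.
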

\begin{proof}
The set of permutations $(x_1,\ldots, x_m)$ of
$(\lambda_1, \ldots, \lambda_m)$
that satisfy
$\sum_{i\in I} x_i \leq \sum_{i=1}^{|I|} \lambda_i$ with equality is the set of products of the permutations of $(\lambda_1, \lambda_2,  \ldots, \lambda_{|I|})$
and the permutations of $(\lambda_{|I|+1}, \lambda_{|I|+2}, \cdots, \lambda_m)$.
The convex hull of the permutations of $(\lambda_1, \lambda_2,  \ldots, \lambda_{|I|})$ has dimension $0$ if $\lambda_1 = \lambda_2 = \cdots =\lambda_{|I|}$, and otherwise it has
dimension $|I|-1$.
Similarly, the convex hull of the permutations of $(\lambda_{|I|+1}, \lambda_{|I|+2}, \cdots, \lambda_m)$ has dimension 0 if $\lambda_{|I|+1} = \lambda_{|I|+2} = \cdots= \lambda_m$, and otherwise it has dimension $m-|I|-1$.
So the convex hull of the products has dimension $m-2$ exactly when $|I|=1$ and not all the $\lambda_i$ are equal for $i\geq2$, or when $|I|=m-1$ and not all the $\lambda_i$ are equal for $i\leq m-1$, or when $2\leq |I|\leq m-2$ and not all the $\lambda_i$ are equal for $i\leq |I|$ and not all the $\lambda_i$ are equal for $i\geq |I|+1$.
\end{proof}

Now that we know the facet-defining hyperplanes of 
$\Newt(s_\lambda(\x))=\mathcal{P}_\lambda^m$, we will 
use the lattice distance criterion to 
characterize which Schur polynomials give rise to reflexive Newton polytopes.

First we consider the following special case when $m<|\lambda|$.

\begin{proposition}\label{prop.mmm}
Let $m\geq2$, and $\lambda=(m,\ldots,m,0)\vdash m(m-1)$.  Then the Newton polytope $\Newt(s_\lambda(\x))=\mathcal{P}^m_\lambda$ is reflexive.
\end{proposition}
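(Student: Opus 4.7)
The plan is to check directly that the centroid $\mathbf{c} = (m-1,\ldots,m-1)$ is an interior lattice point at lattice distance~$1$ from every facet of $\mathcal{P}_\lambda^m$, which by the extension of Hibi's criterion in Section~\ref{Ehrhart} certifies reflexivity. First, $\mathbf{c}$ lies in the affine span $H = \{\x \in \R^m : \sum x_i = m(m-1)\}$, is integral, and has every coordinate equal to $m-1 < m = \lambda_1$; hence every one of Rado's inequalities $\sum_{i\in I} x_i \leq \sum_{i=1}^{|I|} \lambda_i$ holds strictly at $\mathbf{c}$, so $\mathbf{c}$ is in the relative interior of $\mathcal{P}_\lambda^m$.

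Next I would apply Theorem~\ref{prop:facets} to $\lambda = (m,\ldots,m,0)$. Whenever $|I| \leq m-1$ one has $\lambda_1 = \cdots = \lambda_{|I|} = m$, so the exceptional hypothesis of parts (b) and (c) is met and no facets arise from them. Part (a) does apply because $\lambda_2 = m \neq 0 = \lambda_m$ when $m \geq 3$, so the facet-defining inequalities are exactly $x_i \leq m$ for $i=1,\ldots,m$. The edge case $m=2$, in which $\mathcal{P}_\lambda^2$ is the segment between $(2,0)$ and $(0,2)$ with midpoint $(1,1)$, is reflexive by direct inspection.

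Finally, to verify the lattice distance condition, observe that each facet hyperplane $x_i = m$ is parallel to the hyperplane $x_i = m-1$ passing through $\mathbf{c}$. Any lattice point of $H$ has integer $i$-th coordinate, so there are no lattice points of $H$ strictly between $x_i = m-1$ and $x_i = m$. Hence $\mathbf{c}$ is at lattice distance~$1$ from every facet, and $\mathcal{P}_\lambda^m$ is reflexive. The only subtlety is that $\mathcal{P}_\lambda^m$ has dimension $m-1$ inside $\R^m$, so lattice distances must be measured with respect to the lattice $\Z^m \cap H$ in the affine span rather than $\Z^m$; the integrality of each coordinate $x_i$ sidesteps this issue cleanly. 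Equivalently, one can apply the lattice-preserving bijection $\pi\colon H \cap \Z^m \to \Z^{m-1}$ that drops the last coordinate, under which the facet $x_i = m$ becomes $y_i = 1$ (for $i \leq m-1$) or $\sum_{i=1}^{m-1} y_i = -1$ (for $i = m$), each with primitive normal and right-hand side of absolute value~$1$.
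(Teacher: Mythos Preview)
Your proof is correct and follows essentially the same approach as the paper: identify the interior lattice point $(m-1,\ldots,m-1)$, use Theorem~\ref{prop:facets} to conclude that the only facet-defining inequalities are $x_i \leq m$, and verify lattice distance~$1$. Your version is slightly more careful in explicitly handling the $m=2$ edge case (where the hypothesis of Theorem~\ref{prop:facets}(a) is degenerate) and in spelling out why lattice distance in the affine sublattice agrees with the na\"ive coordinate computation.
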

\begin{proof}
The $\lambda$-permutohedron $\mathcal{P}^m_\lambda$ is the convex hull of the $S_m$-orbit of $(m,\dots,m,0)$ in $\mathbb{R}^m$.  
The polytope lies in the hyperplane $\sum_{i=1}^m x_i = (m-1)m$, and by Theorem~\ref{prop:facets} the facets are determined by the inequalities $x_i\leq m$ for $i=1,\ldots, m$.  
Thus, the lattice point ${(m-1,\ldots, m-1)}$ in the interior of $\mathcal{P}^m_\lambda$ is unique, and it is lattice distance $1$ from the facets.  
Therefore, $\mathcal{P}^m_\lambda$ is reflexive.
\end{proof}


We next consider the case $m=|\lambda|$.

\begin{proposition}\label{refl-list}
Let $m\geq 2$.  The Newton polytope $\Newt(s_\lambda(\x)) = \mathcal{P}^m_\lambda$ is reflexive when $\lambda\vdash m$ is one of the following partitions:
\begin{enumerate}
\item[(a)] $\lambda=(m,0,\ldots,0)$,
\item[(b)] $\lambda=(2,\ldots,2,0,\ldots,0)$ when $m$ is even,
\item[(c)] $\lambda=(2,\ldots,2,1,0,\ldots,0)$ when $m$ is odd,
\item[(d)] $\lambda=(2,1,\ldots,1,0)$.
\end{enumerate}
\end{proposition}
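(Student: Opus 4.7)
The plan is to apply the lattice-distance criterion from Section~\ref{Ehrhart} to each of the four families. In every case $|\lambda|=m$, so the natural candidate for the distinguished interior lattice point is $\mathbf{1}=(1,\ldots,1)$, which already lies in the hyperplane $\sum_i x_i = m$ containing $\mathcal{P}^m_\lambda$. For each partition I would (i) enumerate the facets via Theorem~\ref{prop:facets}, keeping careful track of which of its three cases actually contribute; (ii) verify that $\mathbf{1}$ is the unique lattice point in the relative interior; and (iii) check that $\mathbf{1}$ is at lattice distance exactly $1$ from every facet. The $S_m$-symmetry of both $\mathcal{P}^m_\lambda$ and $\mathbf{1}$ reduces each verification to one representative per $S_m$-orbit.

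For parts (a), (b), and (c) of the proposition the facet enumeration collapses. When $\lambda = (m,0,\ldots,0)$, the tail equality $\lambda_2=\cdots=\lambda_m=0$ eliminates cases (a) and (b) of Theorem~\ref{prop:facets}, leaving only the facets $x_j\ge 0$ from case~(c). When $\lambda = (2^{m/2},0^{m/2})$ or $\lambda = (2^k,1,0^{m-k-1})$ with $m=2k+1$, the partition splits into blocks in such a way that case~(b) never produces a facet, and the facet set is just $\{x_i\le 2\}\cup\{x_j\ge 0\}$. In each of these cases the constraints $0<x_i<2$ (or merely $x_i>0$ in case~(a)) together with $\sum_i x_i = m$ force $x_i=1$ for all $i$, giving uniqueness of $\mathbf{1}$; and since each facet normal is $\pm e_i$ with right-hand side differing from its value at $\mathbf{1}$ by exactly $1$, the lattice distance is $1$.

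The substantive case is (d), $\lambda=(2,1,\ldots,1,0)$. Here all three parts of Theorem~\ref{prop:facets} contribute: (a) gives $x_i\le 2$; (b) gives $\sum_{i\in I}x_i\le |I|+1$ for every $I\subseteq[m]$ with $2\le |I|\le m-2$ (both exceptional clauses fail since $\lambda_1\ne\lambda_2$ and $\lambda_{m-1}\ne\lambda_m$); and (c) gives $x_j\ge 0$. At $\mathbf{1}$ each such inequality evaluates to exactly one less than its right-hand side. Every facet normal has entries in $\{0,1\}$, so the pairwise differences $a_i-a_j$ lie in $\{-1,0,1\}$; this shows the linear form is primitive on the affine lattice of the hyperplane, so no lattice point lies strictly between the facet and its parallel translate through $\mathbf{1}$. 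The strict constraints $0<x_i<2$ again force $\mathbf{1}$ to be the unique interior lattice point.

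The main obstacle is expected to be the bookkeeping in case~(d): confirming that Theorem~\ref{prop:facets}(b) really does produce facets for every $I$ with $2\le |I|\le m-2$, and verifying primitivity of the facet normals within the affine hyperplane $\sum_i x_i = m$ rather than in $\R^m$.
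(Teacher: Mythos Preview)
Your proposal is correct and follows essentially the same approach as the paper: in each case you identify the facets via Theorem~\ref{prop:facets}, verify that $\mathbf{1}=(1,\ldots,1)$ is the unique interior lattice point, and check that it is lattice distance~$1$ from every facet. Your treatment is in fact slightly more careful than the paper's in case~(d), where you explicitly argue that the $\{0,1\}$-valued facet normals restrict to primitive linear forms on the affine lattice of $\sum_i x_i = m$; the paper simply asserts the lattice-distance conclusion.
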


\begin{proof}
We shall show in each case that $(1,\ldots,1)$ is the unique interior lattice point in $\mathcal{P}^m_\lambda$ and it is lattice distance one from every facet of the Newton polytope.

\begin{enumerate}
\item[(a)] The polytope $\mathcal{P}_\lambda^m$ is the convex hull of 
the $X_m$-orbit of $(m,0,\ldots, 0)$ in $\mathbb{R}^m$.  
By Theorem~\ref{prop:facets}, the facet-defining inequalities of $\mathcal{P}_\lambda^m$ are just those for $|I| =m-1$, and these can be written as 
$$ \sum_{i\neq j} x_i \leq m$$
for each $j=1,\ldots, m$.  
From this, we see that the point $(1,\ldots,1)$ is the unique lattice point in the interior of $\mathcal{P}_\lambda^m$, and it is lattice distance one from all facet-defining hyperplanes.  

Notice that $\mathcal{P}_\lambda^m$ lies in the hyperplane $\sum_{i=1}^m x_i =m$, so the inequality $ \sum_{i\neq j} x_i \leq m$ is equivalent to $x_j \geq0$.  
Although these two inequalities represent different half-spaces in $\mathbb{R}^m$, their intersection with the hyperplane $\sum_{i=1}^m x_i =m$ is the same.  
The point $(1,\ldots,1)$ is lattice distance one from both of the hyperplanes of $\mathbb{R}^m$ that bound these two half-spaces, and is lattice distance one from their intersection, considered as a hyperplane (of dimension $m-2$) in the $(m-1)$-dimensional space given by $\sum_{i=1}^m x_i =m$.

\item[(b)] Suppose $m\geq 4$ is even. 
The polytope $\mathcal{P}_\lambda^m$ is the convex hull of the $S_m$-orbit of $(2,\ldots, 2,0,\ldots, 0)$ in $\mathbb{R}^m$.
By Theorem~\ref{prop:facets}, the facet-defining inequalities of $\mathcal{P}_\lambda^m$ are $x_i\leq 2$ for each $i=1,\ldots,m$, and $\sum_{i\neq j} x_i \leq m$, for each $j=1,\ldots, m$.
Thus the point $(1,\ldots,1)$ is the unique lattice point in the interior of $\mathcal{P}_\lambda^m$, and it is lattice distance one from all facet-defining hyperplanes.

\item[(c)] Suppose $m\geq3$ is odd. 
This case is essentially the same as the previous.  
The facet-defining inequalities of $\mathcal{P}^m_\lambda$ are $x_i\leq 2$ and $x_i\geq 0$  for $i=1,\ldots, m$.
Thus the point $(1,\ldots,1)$ is the unique lattice point in the interior of $\mathcal{P}_\lambda^m$, and it is lattice distance one from all facet-defining hyperplanes.

\item[(d)] The polytope $\mathcal{P}^m_\lambda$ is the convex hull of the $S_m$-orbit of $(2,1,\ldots, 1,0)$ in $\mathbb{R}^m$.
By Theorem~\ref{prop:facets}, the facet-defining inequalities of $\mathcal{P}^m_\lambda$ are 
$$x_{i_1}+ \cdots + x_{i_s} \leq s+1$$
for all nonempty $I = \{i_1,\ldots, i_s\} \subseteq [m]$ with $|I|\leq m-1.$
Thus the point $(1,\ldots,1)$ is the unique lattice point in the interior of $\mathcal{P}_\lambda^m$, and is lattice distance $1$ from all facet-defining hyperplanes.  
\end{enumerate}
Therefore we conclude in each case that $\mathcal{P}_\lambda^m$ is reflexive.
\end{proof}

We next prove that Propositions~\ref{prop.mmm} and~\ref{refl-list} give a complete list of reflexive permutohedra.
To do this, we analyze the unique interior point of the reflexive polytope up to translation.  Let $\polytope^\circ$ denote the (relative) interior of the 
polytope $\polytope$.

\begin{lemma}
\label{prop:SingleInterior}
Let $m\geq2$ and let $\lambda\vdash n$ be a partition with at most $m$ parts. 
If $|(\mathcal{P}_\lambda^m)^\circ \cap \mathbb{Z}^m|=1$, then $m|n$.
\end{lemma}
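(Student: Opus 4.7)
The plan is to exploit the $S_m$-symmetry of the $\lambda$-permutohedron. By construction, $\mathcal{P}_\lambda^m$ is the convex hull of the $S_m$-orbit of $(\lambda_1, \ldots, \lambda_m)$, so it is invariant under the action of $S_m$ on $\mathbb{R}^m$ by coordinate permutation. This action preserves the relative interior (permutations are linear isomorphisms carrying $\mathcal{P}_\lambda^m$ to itself, so they map interior points to interior points) and preserves the integer lattice $\mathbb{Z}^m$. Hence $(\mathcal{P}_\lambda^m)^\circ \cap \mathbb{Z}^m$ is itself an $S_m$-invariant set.

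The key observation is then the following: if this set consists of a single point $\p = (p_1, \ldots, p_m)$, then every permutation $\sigma \in S_m$ must send $\p$ to itself, i.e., $\sigma(\p) = \p$ for all $\sigma$. This forces $p_1 = p_2 = \cdots = p_m$; call this common value $c$. Since $\p \in \mathcal{P}_\lambda^m$ lies in the hyperplane $\sum_{i=1}^m x_i = n$, we get $mc = n$, and therefore $m \mid n$.

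I expect no real obstacle here; the argument is essentially a symmetry/averaging observation. The only subtlety worth stating clearly is that the $S_m$-action on $\mathbb{R}^m$ does preserve the relative interior of $\mathcal{P}_\lambda^m$, which is immediate because each coordinate permutation is a lattice-preserving linear automorphism of $\mathbb{R}^m$ that restricts to a self-homeomorphism of $\mathcal{P}_\lambda^m$ (and thus of its relative interior). Once that is noted, uniqueness of the interior lattice point collapses it to the $S_m$-fixed point on the hyperplane $\sum x_i = n$, which exists in $\mathbb{Z}^m$ if and only if $m \mid n$.
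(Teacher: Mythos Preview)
Your argument is correct and is essentially identical to the paper's proof: both use the $S_m$-invariance of $(\mathcal{P}_\lambda^m)^\circ \cap \mathbb{Z}^m$ to conclude that a unique interior lattice point must be the $S_m$-fixed point $(n/m,\ldots,n/m)$, which is integral only when $m\mid n$. The paper states this in two sentences; your version just spells out the justification for why the $S_m$-action preserves interior lattice points.
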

\begin{proof}
If a lattice point is contained in the interior of $\mathcal{P}_\lambda^m$ then so is its entire $S_m$-orbit.
Thus, the only candidate for a single interior point is $(\frac{n}{m},\ldots,\frac{n}{m})$ which is only a lattice point when $m|n$.
\end{proof}

\begin{lemma}
\label{prop:SubtractFullColumns}
Let $m\geq2$ and let $\lambda\vdash n$ be a partition with $m$ parts. Let $\lambda'=(\lambda_1-\lambda_m,\ldots, \lambda_{m-1}-\lambda_m, 0)\vdash n-m\lambda_m$.
Then $\mathcal{P}_{\lambda'}^m$ is a translation of $\mathcal{P}_{\lambda}^{m}$.
\end{lemma}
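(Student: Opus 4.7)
The plan is to use the fact, already recorded in the paper, that $\mathcal{P}_\lambda^m$ is the convex hull of the $S_m$-orbit of the vector $(\lambda_1,\ldots,\lambda_m)\in\mathbb{R}^m$. Since $\lambda$ has exactly $m$ parts (possibly with $\lambda_m=0$), the vector $\mathbf{c}=(\lambda_m,\lambda_m,\ldots,\lambda_m)$ is fixed by every $\sigma\in S_m$. So for every permutation $\sigma$,
\[
\sigma\cdot(\lambda_1,\ldots,\lambda_m)-\mathbf{c}=\sigma\cdot\bigl((\lambda_1,\ldots,\lambda_m)-\mathbf{c}\bigr)=\sigma\cdot(\lambda_1-\lambda_m,\ldots,\lambda_{m-1}-\lambda_m,0).
\]
In other words, translating the $S_m$-orbit of $\lambda$ by $-\mathbf{c}$ gives exactly the $S_m$-orbit of $\lambda'$.

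Next I would use that convex hull commutes with translation: for any finite set $V\subset\mathbb{R}^m$ and any $\mathbf{c}\in\mathbb{R}^m$, $\conv(V)-\mathbf{c}=\conv(V-\mathbf{c})$. Applying this with $V$ equal to the $S_m$-orbit of $(\lambda_1,\ldots,\lambda_m)$ and $\mathbf{c}=(\lambda_m,\ldots,\lambda_m)$ yields
\[
\mathcal{P}_\lambda^m-\mathbf{c}=\conv\bigl\{\sigma\cdot(\lambda_1-\lambda_m,\ldots,\lambda_{m-1}-\lambda_m,0):\sigma\in S_m\bigr\}=\mathcal{P}_{\lambda'}^m.
\]
Thus $\mathcal{P}_{\lambda'}^m$ is obtained from $\mathcal{P}_\lambda^m$ by translation by $-\mathbf{c}$.

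There is really no obstacle to overcome here: the statement reduces to the two trivial observations that constant vectors are $S_m$-invariant and that convex hulls are translation equivariant. The only thing to be mildly careful about is notational: one should be explicit that $\lambda$ is padded with enough trailing zeros so that the $S_m$-action on vectors in $\mathbb{R}^m$ makes sense (which is why the hypothesis ``$\lambda$ with $m$ parts'' is stated), and that $\lambda'$ is indeed a partition of $n-m\lambda_m$ with at most $m$ parts, which is immediate from the definition.
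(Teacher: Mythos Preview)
Your proof is correct and follows the same approach as the paper's: both use the vertex description of $\mathcal{P}_\lambda^m$ as the convex hull of the $S_m$-orbit of $(\lambda_1,\ldots,\lambda_m)$ and observe that subtracting the constant vector $(\lambda_m,\ldots,\lambda_m)$ carries this orbit to the orbit defining $\mathcal{P}_{\lambda'}^m$. The paper's argument is a one-line appeal to this vertex description, whereas you have spelled out the underlying facts ($S_m$-invariance of constant vectors and translation-equivariance of convex hulls) explicitly.
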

\begin{proof}
The vertex description of $\lambda$-permutohedra implies that $\mathcal{P}_{\lambda'}^{m}$ is precisely the polytope $\mathcal{P}_\lambda^m$ translated linearly by the vector $(\lambda_m,\ldots, \lambda_m)$.
\end{proof}

In this case, we say that $\lambda$ $\Def{reduces by translation}$ to $\lambda'$.

\begin{theorem}
\label{thm:reflexive} Let $m\geq2$ and let $\lambda=(\lambda_1,\ldots,\lambda_m)\vdash n$ be a partition with at most $m$ parts.
The Newton polytope $\Newt(s_\lambda(\x))=\mathcal{P}_\lambda^m$ is reflexive if and only if $\lambda$ reduces by translation to $\lambda'$ of the following form:
\[\lambda' =
    \begin{cases}
    (m,0,\ldots,0) \vdash m,\\
    (2,1,\ldots,1,0) \vdash m,\\
    (2,\ldots,2,0,\ldots,0) \vdash m, & \hbox{ when $m$ is even},\\
    (2,\ldots,2,1,0,\ldots,0) \vdash m, & \hbox{ when $m$ is odd},\\
    (m,\ldots,m,0) \vdash m(m-1).
    \end{cases}
    \]
\end{theorem}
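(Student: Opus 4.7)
The strategy is to reduce to the case $\lambda_m=0$ using Lemma~\ref{prop:SubtractFullColumns}, which preserves reflexivity since reflexivity as defined in Section~\ref{Ehrhart} is invariant under lattice-preserving translation. The ``if'' direction then follows directly from Propositions~\ref{prop.mmm} and~\ref{refl-list}, which already verify reflexivity for each of the listed partitions. For the ``only if'' direction, assume $\mathcal{P}_\lambda^m$ is reflexive with $\lambda_m=0$, and show $\lambda$ must lie on the given list.

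By Lemma~\ref{prop:SingleInterior} we have $m\mid|\lambda|$; write $|\lambda|=mk$ with $k\geq 1$. Since $S_m$ permutes interior lattice points and the $S_m$-fixed point in $\aff(\mathcal{P}_\lambda^m)$ is unique, the single interior lattice point is $\mathbf{p}=(k,\ldots,k)$. For each facet-defining $|I|=s$ in the sense of Theorem~\ref{prop:facets}, the primitive inward normal $\sum_{i\in I}e_i$ attains every integer value on lattice points of $\aff(\mathcal{P}_\lambda^m)$, so the lattice distance from $\mathbf{p}$ to that facet equals the slack
\[
\sum_{i=1}^{s}\lambda_i - sk,
\]
and reflexivity forces this to equal $1$ whenever the facet is present. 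Now split on whether $\lambda_1=\cdots=\lambda_{m-1}$. If so, then $\lambda=(a,\ldots,a,0)$ and Theorem~\ref{prop:facets} leaves only the facets $x_i\leq a$, so combining $a-k=1$ with $a(m-1)=mk$ gives $k=m-1$ and $a=m$, yielding $\lambda=(m,\ldots,m,0)$.

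Otherwise a strict inequality appears among $\lambda_1,\ldots,\lambda_{m-1}$, so by Theorem~\ref{prop:facets}(c) the $|I|=m-1$ facet is present with slack $mk-(m-1)k=k$, forcing $k=1$ and $|\lambda|=m$. If $\lambda_2=\cdots=\lambda_m=0$, then $\lambda=(m,0,\ldots,0)$; otherwise Theorem~\ref{prop:facets}(a) supplies the $|I|=1$ facet with slack $\lambda_1-1=1$, forcing $\lambda_1=2$ and every part in $\{0,1,2\}$. Writing $\lambda=(2^p,1^q,0^p)$ with $q=m-2p$ and $p\geq 1$, the case $p=1$ recovers $(2,1^{m-2},0)$. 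For $p\geq 2$, a case check against Theorem~\ref{prop:facets}(b) shows that every intermediate $|I|=s$ facet with $s\neq p+1$ is suppressed (either $\lambda_1=\cdots=\lambda_s$ or $\lambda_{s+1}=\cdots=\lambda_m$ holds), while the $s=p+1$ facet is present precisely when $q\geq 2$ and carries slack $p$; hence $p\geq 2$ forces $q\in\{0,1\}$, leaving $\lambda=(2^{m/2},0^{m/2})$ with $m$ even or $\lambda=(2^{(m-1)/2},1,0^{(m-1)/2})$ with $m$ odd. The main obstacle is this last step: identifying which intermediate $s$-facets actually appear as $(p,q)$ varies requires careful bookkeeping against the case-by-case conditions of Theorem~\ref{prop:facets}(b), and pinpointing $s=p+1$ as the unique constraint that distinguishes the reflexive subfamily.
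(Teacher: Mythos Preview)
Your plan is correct and mirrors the paper's proof: reduce to $\lambda_m=0$, read off the unique interior point $(k,\ldots,k)$ from Lemma~\ref{prop:SingleInterior}, and force the slack $\sum_{i=1}^s\lambda_i-sk$ to equal $1$ at each facet supplied by Theorem~\ref{prop:facets}, using the $|I|=1$, $|I|=m-1$, and $|I|=p+1$ facets in turn (the paper organizes the case split slightly differently---on $\lambda_2=0$ and then $\lambda_{m-1}$---but uses exactly the same three facets). One small correction: your claim that for $p\geq 2$ \emph{every} intermediate facet with $s\neq p+1$ is suppressed is false when $q\geq 3$ (the facets at $s=p+2,\ldots,p+q-1$ are also present), but this is harmless, since all you actually need is that the $s=p+1$ facet is present with slack $p$ whenever $q\geq 2$.
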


\begin{proof}
Propositions~\ref{prop.mmm} and~\ref{refl-list}, and Lemmas~\ref{prop:SingleInterior} and~\ref{prop:SubtractFullColumns} show that 
if $\lambda$ reduces to one of these forms, then $\mathcal{P}^m_{\lambda}$ is
reflexive.  We now prove the converse.

Suppose $\mathcal{P}_{\lambda}^m$ is reflexive. 
By Lemma~\ref{prop:SingleInterior} we may assume that $m|n$ and $\mathcal{P}_{\lambda}^m$ has the unique interior lattice point $(\frac{n}{m},\ldots,\frac{n}{m})$. 
By Lemma~\ref{prop:SubtractFullColumns}, if $\ell(\lambda)=m$, we may replace $\lambda$ by its translation by $(-\lambda_m,\dots,-\lambda_m)$.
Thus we assume that $\lambda_m=0$. 

We examine the cases $\lambda_2=0$ and $\lambda_2 >0$.
If $\lambda_2=0$, then $\lambda=(n,0,\ldots,0)\vdash n$. By Theorem~\ref{prop:facets}(c), $\sum_{i=1}^{m-1} x_i\leq n$ is a facet-defining hyperplane of $\mathcal{P}_{\lambda}^m$. Its unique interior lattice point is lattice distance one from this hyperplane, so $n - (m-1)\frac{n}{m} = 1$ implies $n=m$, giving the first case on the list of possible $\lambda'$.

Now assume $\lambda_2 > 0$.
We claim that $\lambda_1=\frac{n}{m}+1$.
First, since $\lambda \vdash n$ and $\lambda_m=0$, then $\lambda_1 \geq \frac{n}{m}+1$.
Suppose $\lambda_1=\frac{n}{m}+j$ for some positive integer $j$.
By Theorem~\ref{prop:facets}(a), for each $i\in[m]$, $\mathcal{P}^m_{\lambda}$ has the facet-defining hyperplane
    \[
    x_i\leq \frac{n}{m}+j.
    \]
The interior lattice point $(\frac{n}{m},\ldots,\frac{n}{m})$ is lattice distance $j$ from each of these facets. $\mathcal{P}_\lambda^m$ is reflexive implies $j=1$, so $\lambda_1 \leq \frac{n}{m}+1$. We conclude that $\lambda_1 = \frac{n}{m}+1$.

We next examine the subcases $\lambda_{m-1} = \frac{n}{m}+1$ and $\lambda_{m-1} \leq \frac{n}{m}$.
If $\lambda_{m-1}= \frac{n}{m}+1$, then $\lambda_i=\frac{n}{m}+1$ for all $i=1,\ldots, m-1$, so $n= \sum_{i=1}^{m} \lambda_i = (m-1)(\frac{n}{m}+1)$ implies $n=m(m-1)$ and $\lambda_i = \frac{m(m-1)}{m}+1 = m$ for $i=1,\ldots, m-1$.
This is the last case on the list of possible $\lambda'$.

Now assume $\lambda_{m-1} \leq \frac{n}{m}$.
Since $\lambda_1=\frac{n}{m}+1$, then by Theorem~\ref{prop:facets}(c), for any $I\subseteq[m]$ with $|I|=m-1$,
$\mathcal{P}_\lambda^m$ has the facet-defining hyperplane
    \[
    \sum_{i\in I} x_i\leq n.
    \]
The interior lattice point $(\frac{n}{m},\ldots,\frac{n}{m})$ is lattice 
distance $\frac{n}{m}$ from these facets, so $\mathcal{P}_\lambda^m$ reflexive implies $n=m$, and $\mathcal{P}_\lambda^m$ has the unique  interior lattice point $(1,\ldots,1)$. 

Continuing, we have $\lambda\vdash m=n$ with $\lambda_1=\frac{n}{m}+1 =2$, $\lambda_2 >0$, $\lambda_{m-1}\leq1$ and $\lambda_m=0$, so $\lambda$ is of the form $(2^k, 1^{m-2k}, 0^k)$ for some $k\geq1$.  Assuming that $m-2k\geq2$, then by Theorem~\ref{prop:facets}(b), for any subset $I\subseteq[m]$ with $|I|=k+1$,
$\mathcal{P}_\lambda^m$ has the facet-defining hyperplane
$$\sum_{i\in I} x_i\leq 2k+1.$$
The interior lattice point $(1,\ldots,1)$ is lattice distance $k$ from this facet, so $\mathcal{P}_\lambda^m$ is reflexive implies $k=1$, giving the second case on the list of possible $\lambda'$. 

Lastly, if $m-2k =0$ or $1$, this gives the remaining cases on the list of possible $\lambda'$.
Thus this completes the proof that $\mathcal{P}^m_\lambda$ is reflexive implies that $\lambda$ reduces to one of the $\lambda'$ on the list.
\end{proof}

The result of Theorem~\ref{thm:reflexive} allow us to give a characterization of the Gorenstein property as a corollary.

\begin{corollary} \label{cor:Schur_gorenstein}Let $\lambda$ be a partition with at most $m$ parts.
The Newton polytope $\mathcal{P}_\lambda^m = \Newt(s_\lambda(\x))$ is Gorenstein if and only if $\mathcal{P}_\lambda^m$ is reflexive or $\lambda$ reduces by translation to $\lambda'$ of the following form:
\[\lambda'=\begin{cases}
(k,0,\ldots,0),& \hbox{ where $k|m$},\\
(1^{m/2},0^{m/2}), & \hbox{ if $m$ is even},\\
(k,\ldots, k,0), & \hbox{ where $k|m$}.
\end{cases}\]
\end{corollary}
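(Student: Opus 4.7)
The plan is to reduce directly to Theorem~\ref{thm:reflexive} via dilation. By definition, $\mathcal{P}_\lambda^m$ is Gorenstein if and only if some positive dilate $c\mathcal{P}_\lambda^m$ is reflexive. As noted in the proof of Proposition~\ref{thm:Schur_IDP}, the vertices of $c\mathcal{P}_\lambda^m$ are obtained by scaling the vertices of $\mathcal{P}_\lambda^m$ by $c$, so $c\mathcal{P}_\lambda^m = \mathcal{P}_{c\lambda}^m = \Newt(s_{c\lambda}(\x))$. Thus $\mathcal{P}_\lambda^m$ is Gorenstein if and only if there is a positive integer $c$ such that $\mathcal{P}_{c\lambda}^m$ is reflexive, which by Theorem~\ref{thm:reflexive} means that $c\lambda$ reduces by translation to one of the five listed forms.

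First I would apply Lemma~\ref{prop:SubtractFullColumns} to assume $\lambda_m = 0$; since dilation commutes with translation by $(\lambda_m,\ldots,\lambda_m)$, the original $\lambda$ is Gorenstein if and only if its translational reduction is. Under this normalization $(c\lambda)_m = 0$, so $c\lambda$ admits no further translational reduction and must equal one of the five partitions of Theorem~\ref{thm:reflexive} outright. Next I would solve the equation $c\lambda = \text{form}$ case by case.

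The two forms containing a coordinate equal to $1$, namely $(2,1,\ldots,1,0)$ and $(2,\ldots,2,1,0,\ldots,0)$, force $c = 1$ and so contribute only to the reflexive disjunct of the corollary. The form $(m,0,\ldots,0)$ forces $\lambda = (k,0,\ldots,0)$ with $ck = m$, so $k \mid m$. The form $(m,\ldots,m,0)$ forces $\lambda = (k,\ldots,k,0)$ with $ck = m$, so $k \mid m$. The form $(2,\ldots,2,0,\ldots,0)$ with $m$ even permits $c \in \{1,2\}$; the value $c = 2$ produces the genuinely new Gorenstein-but-not-reflexive shape $\lambda = (1^{m/2}, 0^{m/2})$. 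Collecting these yields exactly the three families listed in the corollary.

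The main obstacle I anticipate is bookkeeping rather than substance: I need to verify cleanly that dilation commutes with the ``reduce by translation'' operation so that the initial normalization $\lambda_m = 0$ loses no generality, and I need to check that no spurious solution has been missed when the dilation factor $c$ is forced by an integrality condition on the entries of $\lambda$. A final sanity check confirms that the new family $(1^{m/2},0^{m/2})$ is not already reflexive for $m \geq 4$, so the three listed families together with the reflexive classification genuinely exhaust the Gorenstein case.
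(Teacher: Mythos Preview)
Your proposal is correct and follows essentially the same approach as the paper: both arguments use $c\mathcal{P}_\lambda^m = \mathcal{P}_{c\lambda}^m$, invoke Theorem~\ref{thm:reflexive} on $c\lambda$, and then run through the five reflexive shapes case by case, observing that the two shapes containing a part equal to $1$ force $c=1$ while the remaining three yield the listed Gorenstein families via the divisibility condition $ck=m$ (or $c\in\{1,2\}$ for the $(2^{m/2},0^{m/2})$ shape). Your treatment is in fact slightly more careful than the paper's in two places: you explicitly justify the normalization $\lambda_m=0$ via commutation of dilation with translation (the paper simply writes $(a\lambda)'=a\lambda'$ without comment), and you note the sanity check that $(1^{m/2},0^{m/2})$ is not already reflexive, which the paper omits.
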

\begin{proof}
If $\mathcal{P}_\lambda^m$ is Gorenstein, there exists $a \in \mathbb{Z}_{\geq 1}$ so that $a\mathcal{P}_\lambda^m=\mathcal{P}_{a\lambda}^m$ is reflexive. 
If $a=1$, then $\mathcal{P}_\lambda^m$ is reflexive. 
Assume $\mathcal{P}_{a\lambda}^m$ is reflexive for some $a \geq 2$. 
Then $(a\lambda)'=a\lambda'$ is one of the cases in Theorem~\ref{thm:reflexive}.

Suppose $a\lambda'\vdash m$. If $a\lambda' = (m,0,\ldots,0)$, then $\lambda' = (\frac{m}{a},0,\ldots,0)$, so $\lambda_1'$ divides $m$.
If $a\lambda' = (2,\ldots,2,0,\ldots,0)$ where $m$ is even, then $\lambda' = (\frac{2}{a},\ldots, \frac{2}{a},0,\ldots,0)$.  As $a\geq2$, then $\lambda_1'=\frac{2}{a}=1$.

If $a\lambda' = (2,\ldots,2,1,0)$, then $\lambda' = (\frac{2}{a},\ldots,\frac{2}{a}, \frac{1}{a},0,\ldots,0)$. This is not possible as $a\geq2$.  Similarly, 
$a\lambda' \neq (2,1,\ldots,1,0,\ldots,0)$.

Lastly, suppose $a\lambda' = (m,\ldots, m,0)\vdash m(m-1)$. 
Then $\lambda'= (\frac{m}{a},\ldots,\frac{m}{a},0)$, so $\lambda_1',\ldots,\lambda_{m-1}'$ all divide $m$.
\end{proof}

As an immediate consequence of Corollary~\ref{cor:Schur_gorenstein}, we recover a result on the Gorenstein property for hypersimplices originally given by De Negri and Hibi~\cite[Theorem 2.4]{DeNegri-Hibi}.

\begin{corollary}
\label{cor:hypersimplices}
Let $\Delta_{k,n}$ be a hypersimplex.
Then $\Delta_{k,n}$ is Gorenstein if and only if $n=2k$, $k=1$, or $k=n-1$.
\end{corollary}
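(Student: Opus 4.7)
The hypersimplex $\Delta_{k,n}$ is the convex hull of all $0/1$-vectors in $\mathbb{R}^n$ with exactly $k$ ones, which is precisely the $\lambda$-permutohedron $\mathcal{P}^n_\lambda = \Newt(s_\lambda(\x))$ for the partition $\lambda = (1^k, 0^{n-k})$ (the Schur polynomial $s_{(1^k)}$ being the elementary symmetric polynomial $e_k$). The plan is to apply Corollary~\ref{cor:Schur_gorenstein} directly to this $\lambda$ with $m = n$ and simply match the partition against the listed forms.

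First I would restrict to $1 \le k \le n-1$, since $k = 0$ and $k = n$ give degenerate (single-point) polytopes. In this range the last entry of $\lambda$ is $0$, so $\lambda$ admits no further translation reduction in the sense of Lemma~\ref{prop:SubtractFullColumns}, and $\lambda' = \lambda$. Next I would dispense with the possibility that $\mathcal{P}^n_\lambda$ is itself reflexive: by Theorem~\ref{thm:reflexive} every reflexive reduced form has an entry equal to $2$ or to $n$ (the only exception being $(n,0,\ldots,0)\vdash n$, which would force $n=1$), whereas $(1^k, 0^{n-k})$ uses only the values $0$ and $1$. So reflexivity contributes nothing new for $n \ge 2$.

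It then remains to match $\lambda = (1^k, 0^{n-k})$ against the three Gorenstein-but-not-reflexive reduced forms in Corollary~\ref{cor:Schur_gorenstein}; to avoid a notation clash I will write the divisor parameter in the corollary as $j$ (so $j \mid n$). The form $(j, 0, \ldots, 0)$ matches $\lambda$ exactly when $k=1$ (forcing $j=1$, which always divides $n$); the form $(1^{n/2}, 0^{n/2})$ with $n$ even matches exactly when $k = n/2$, i.e.\ $n = 2k$; and the form $(j, \ldots, j, 0)$ matches exactly when $k = n-1$ (again with $j=1$). Combining these three possibilities gives precisely the claimed characterization $k=1$, $n=2k$, or $k=n-1$.

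There is no real obstacle here, since the argument is a finite case check once one realizes that $\Delta_{k,n} = \mathcal{P}^n_{(1^k,0^{n-k})}$; the only care needed is the bookkeeping between the hypersimplex index $k$ and the independent $k$ (renamed $j$ above) appearing inside Corollary~\ref{cor:Schur_gorenstein}.
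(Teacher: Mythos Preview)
Your proposal is correct and follows exactly the approach the paper intends: identify $\Delta_{k,n}$ with $\mathcal{P}^n_{(1^k,0^{n-k})}$ and run it through Corollary~\ref{cor:Schur_gorenstein}. The paper states the corollary as an ``immediate consequence'' without writing out the case-check; you have simply filled in those details carefully, including the observation that $(1^k,0^{n-k})$ never matches any of the reflexive forms in Theorem~\ref{thm:reflexive}, so only the three additional Gorenstein shapes need to be tested.
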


\subsection{Reflexive Newton polytopes of inflated symmetric Grothendieck polynomials}

We begin by determining the set of facet-defining hyperplanes of the Newton polytope of an inflated symmetric Grothendieck polynomial.  From this, we deduce which Newton polytopes are reflexive.

\begin{definition}
Let $f$ be a linear functional and let $H = \{\x \in \mathbb{R}^m \mid f(\x) = a\}$ be an affine hyperplane in $\mathbb{R}^m$.  Define the closed half-spaces
\begin{align*} \overline{H_+} &= \{\x \in \mathbb{R}^m \mid f(\x)\geq a\},\\
\overline{H_-} &= \{\x \in \mathbb{R}^m \mid f(\x)\leq a\}.
\end{align*}
For any set $S\subseteq \mathbb{R}^m$, we say $H$ \Def{isolates} $S$ if $S$ is contained in $\overline{H_+}$ or $\overline{H_-}$.
\end{definition}
Given an $m$-dimensional polytope $\polytope$, if $H$ is an affine hyperplane which isolates $\polytope$ and $\dim (\polytope \cap H)=m-1$, then $H$ is a facet-defining hyperplane of $\polytope$.

For the remainder of this section, we assume that $\lambda$ is a partition with at most $m$ parts and that it is reduced by translation, so $\lambda= (\lambda_1,\ldots, \lambda_{m-1},0)$.
Also let $\deg G_{h,\lambda}(\x) = |\lambda|+N$, so the sequence of dominating partitions is $\lambda^{(0)},\ldots, \lambda^{(N)}$.  

We pinpoint some facet-defining inequalities of $\Newt(G_{h,\lambda}(\x))$ to show that if it is reflexive, then there is a very limited region where its unique interior lattice point may lie.

\begin{lemma} \label{lem.frontandback}
The inequality $x_1+\cdots+x_m \geq |\lambda|$ is a facet-defining inequality of the polytope $\Newt(G_{h,\lambda}(\x))$. Furthermore, if the $S_m$-orbit of $\lambda^{(N)}$ is non-trivial, then $x_1+\cdots+x_m \leq |\lambda|+N$ is also a facet-defining inequality of $\Newt(G_{h,\lambda}(\x))$.
\end{lemma}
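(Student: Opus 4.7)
My plan is to identify the minimum and maximum degree slices of $\Newt(G_{h,\lambda}(\x))$ and use Proposition~\ref{proposition_layers_of_polytope} to recognize these slices as $(m-1)$-dimensional faces, hence facets. The whole argument is essentially a direct application of the slicing structure already established.

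First I would observe that every lattice point $\p$ of $\Newt(G_{h,\lambda}(\x))$ satisfies $|\lambda| \leq \sum_{i=1}^m p_i \leq |\lambda|+N$. Indeed, by definition $G_{h,\lambda}(\x) = \sum_{\mu \in A(h,\lambda)} (-1)^{|\mu/\lambda|} b_{h,\lambda\mu}\, s_\mu(\x)$ is a sum of homogeneous components indexed by partitions $\mu$ with $\lambda \subseteq \mu \subseteq \lambda^{(N)}$ (using Lemma~\ref{lem.dominance}(c)), so every monomial has total degree in the interval $[|\lambda|, |\lambda|+N]$. Therefore the hyperplanes $H_0$ and $H_N$, defined by $\sum x_i = |\lambda|$ and $\sum x_i = |\lambda|+N$ respectively, both isolate the polytope.

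Next I would apply Proposition~\ref{proposition_layers_of_polytope} to compute the two slices:
\[
\Newt(G_{h,\lambda}(\x)) \cap H_0 = \Newt(s_\lambda(\x)) = \mathcal{P}_\lambda^m,
\qquad
\Newt(G_{h,\lambda}(\x)) \cap H_N = \Newt(s_{\lambda^{(N)}}(\x)) = \mathcal{P}_{\lambda^{(N)}}^m.
\]
Since $\lambda$ has been reduced by translation, we have $\lambda_m = 0$ while $\lambda_1 > 0$ (in the non-trivial case), so the parts of $\lambda$ are not all equal; the permutohedron $\mathcal{P}_\lambda^m$ therefore has dimension $m-1$. This forces $\Newt(G_{h,\lambda}(\x))$ itself to have dimension $m$ in $\R^m$ (it contains an $(m-1)$-dimensional piece in $H_0$ together with additional points in $H_k$ for $k \geq 1$ when $N \geq 1$), so the isolated $(m-1)$-dimensional intersection with $H_0$ is a facet, proving that $x_1 + \cdots + x_m \geq |\lambda|$ is facet-defining.

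For the upper hyperplane, the same reasoning applies: the $S_m$-orbit of $\lambda^{(N)}$ is non-trivial precisely when the parts of $\lambda^{(N)}$ are not all equal, which is exactly the condition that $\mathcal{P}_{\lambda^{(N)}}^m$ has dimension $m-1$ (rather than collapsing to a point). Under this hypothesis, $H_N \cap \Newt(G_{h,\lambda}(\x))$ is an $(m-1)$-dimensional isolated face, hence a facet, and $x_1+\cdots+x_m \leq |\lambda|+N$ is facet-defining. The only subtle point is correctly translating the non-triviality of the $S_m$-orbit into the dimensionality of the corresponding permutohedron; everything else is a bookkeeping application of Proposition~\ref{proposition_layers_of_polytope} combined with the definition of a facet via an isolating hyperplane.
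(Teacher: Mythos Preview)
Your proof is correct and follows essentially the same approach as the paper: both arguments use Proposition~\ref{proposition_layers_of_polytope} to identify the bottom and top slices as $\Newt(s_\lambda(\x))$ and $\Newt(s_{\lambda^{(N)}}(\x))$, observe that the supporting hyperplanes isolate the polytope, and then use the reduced-by-translation assumption (respectively, the non-triviality of the $S_m$-orbit of $\lambda^{(N)}$) to conclude that these permutohedra are $(m-1)$-dimensional and hence facets. Your version adds the minor explicit remark that the ambient polytope is $m$-dimensional, which the paper leaves implicit.
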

\begin{proof}
Recall that the Newton polytope $\Newt(s_{\lambda^{(k)}}(\x))$ lies in the hyperplane $x_1+\cdots +x_m = |\lambda^{(k)}| = |\lambda|+k$, and from Proposition~\ref{proposition_layers_of_polytope} we know that 
$$\Newt(G_{h,\lambda}(\x)) = \conv \left(\coprod_{k=0}^N \Newt(s_{\lambda^{(k)}}(\x)) \right),$$
so the hyperplane $H_{|\lambda|}$ defined by $x_1+\cdots+x_m=|\lambda|$ isolates $\Newt(G_{h,\lambda}(\x))$. 
Also, $\Newt(G_{h,\lambda}(\x)) \cap H_{|\lambda|} = \Newt(s_\lambda(\x))$, which has dimension $m-1$ because we assume that $\lambda$ is reduced by translation and thus does not have a trivial $S_m$-orbit. Thus $x_1+\cdots+x_m=|\lambda|$ is a facet-defining hyperplane of $\Newt(G_{h,\lambda}(\x))$. 

Similarly, if the $S_m$-orbit of $\lambda^{(N)}\in \mathbb{R}^m$ is non-trivial, then $\Newt(G_{h,\lambda}(\x)) \cap H_{|\lambda|+N} = \Newt(s_{\lambda^{(N)}}(\x))$ is $(m-1)$-dimensional. Since $\Newt(s_{\lambda^{(N)}}(\x))$ lies in the hyperplane $x_1+\cdots+x_m=|\lambda|+N$, then it is a facet-defining hyperplane of $\Newt(G_{h,\lambda}(\x))$. 
The result follows as $\Newt(G_{h,\lambda}(\x))$ lies in $\overline{H_{|\lambda|}}_+$ and $\overline{H_{|\lambda^{(N)}|}}_-$.
\end{proof}

\begin{corollary}\label{cor.uonone}
If $\Newt(G_{h,\lambda}(\x))$ is reflexive, then its unique interior lattice point must lie on $\Newt(s_{\lambda^{(1)}}(\x))$. 
\end{corollary}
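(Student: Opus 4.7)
The plan is to combine the facet-defining inequality from Lemma~\ref{lem.frontandback} with the reflexivity criterion recalled in Section~\ref{Ehrhart}: the unique interior lattice point of a reflexive polytope must be at lattice distance exactly $1$ from every facet-defining hyperplane.

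First I would note that by Lemma~\ref{lem.frontandback}, the hyperplane $H_{|\lambda|}$ defined by $x_1+\cdots+x_m=|\lambda|$ is a facet-defining hyperplane of $\Newt(G_{h,\lambda}(\x))$, and the whole polytope lies in the closed half-space $\overline{H_{|\lambda|,+}}$. The primitive integer normal to $H_{|\lambda|}$ is $(1,1,\ldots,1)$, so for any lattice point $\p=(p_1,\ldots,p_m)\in\Newt(G_{h,\lambda}(\x))\cap\Z^m$ lying on some hyperplane $H_{|\lambda|+k}$ (with $k\in\Z_{\geq 0}$), the lattice distance from $\p$ to the facet $H_{|\lambda|}$ is exactly $k$.

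Next I would invoke the reflexivity hypothesis. Let $\p$ be the unique interior lattice point of $\Newt(G_{h,\lambda}(\x))$. Because $\p$ lies in the (relative) interior, it is strictly on the positive side of $H_{|\lambda|}$, so $k\geq 1$. Reflexivity forces the lattice distance from $\p$ to this facet to equal $1$, hence $k=1$; that is, $\p\in H_{|\lambda|+1}$.

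Finally I would apply Proposition~\ref{proposition_layers_of_polytope}, which identifies the slice $\Newt(G_{h,\lambda}(\x))\cap H_{|\lambda|+1}$ with $\Newt(s_{\lambda^{(1)}}(\x))$. Since $\p$ lies in both $\Newt(G_{h,\lambda}(\x))$ and $H_{|\lambda|+1}$, we conclude $\p\in\Newt(s_{\lambda^{(1)}}(\x))$, as claimed. The argument is essentially immediate once the correct facet is identified; the main (very mild) subtlety is to confirm that $(1,1,\ldots,1)$ is the primitive normal to $H_{|\lambda|}$, so that lattice distance from $\p$ to that facet equals $k$, rather than some multiple or fraction of it.
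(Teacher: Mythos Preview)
Your proof is correct and follows essentially the same approach as the paper: both use Lemma~\ref{lem.frontandback} to identify $x_1+\cdots+x_m=|\lambda|$ as a facet, then apply the lattice-distance-one criterion for reflexivity to force $k=1$. The only cosmetic difference is that the paper first observes (via $S_m$-symmetry) that the interior lattice point has the form $(u,\ldots,u)$, whereas your argument works directly for any interior lattice point without invoking this form.
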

\begin{proof}
Let $\u=(u,\ldots,u)$ be the unique interior lattice point of the reflexive polytope. It must lie on $\Newt(s_{\lambda^{(k)}}(\x))$ for some $k=1,\ldots, N-1$, so $mu = |\lambda|+k$.  Furthermore, by Lemma~\ref{lem.frontandback}, $x_1+\cdots+x_m=|\lambda|$ is a facet-defining hyperplane of $\Newt(G_{h,\lambda}(\x))$ and because the polytope is reflexive, $\u$ is lattice distance one from this hyperplane; therefore we can conclude that $k=1$.
\end{proof}

Recall from Proposition~\ref{proposition_vertices_of_sgp} that $\Newt(G_{h,\lambda}(\x))$ is the convex hull of the union of $\Newt(s_{\lambda^{(b_k)}}(\x))$ for $k=1,\ldots, m$, where if $\lambda^{(N)}= (\lambda_1+a_1,\ldots, \lambda_m+a_m)$ then $b_k=a_1+\cdots+a_k$. 
We next show how the remaining facet-defining inequalities of $\Newt(G_{h,\lambda}(\x))$ arise from the facet-defining inequalities of the polytopes $\Newt(s_{\lambda^{(b_k)}}(\x))$.

Let $F=F(I)$ be a facet of $\Newt(s_{\lambda^{(b_k)}}(\x))$ defined by the inequality
\[
	\sum_{i\in I} x_i \leq \sum_{i=1}^{|I|} \lambda_i^{(b_k)} \hbox{ for some proper nonempty } I\subset [m].
\]   
(Recall from Theorem~\ref{prop:facets} that all facet-defining inequalities of $\Newt(s_{\lambda^{(b_k)}}(\x))$ are of this form.)
Since $\Newt(s_{\lambda^{(b_k)}}(\x))$ lies in the affine hyperplane $x_1+\cdots + x_m = |\lambda^{(b_k)}|$, then $F$ is of the form
$$F= H(F)_{1} \cap \Newt(s_{\lambda^{(b_k)}}(\x)) 
= H(F)_{0} \cap \Newt(s_{\lambda^{(b_k)}}(\x)),$$
where $L=\sum_{i=1}^{|I|} \lambda_i^{(b_k)}$ and the affine hyperplanes are
given by
$$
\begin{array}{ll}
H(F)_{1} &= \left\{ \x \in \mathbb{R}^m \mid \hbox{$\sum_{i\in I}x_i = \sum_{i=1}^{|I|} \lambda_i^{(b_k)}$} = L\right\},\\
H(F)_{0} &= \left\{ \x \in \mathbb{R}^m \mid \hbox{$\sum_{i\notin I}x_i = \sum_{i=|I|+1}^{m} \lambda_i^{(b_k)}$} = |\lambda^{(b_k)}|-L\right\}.
\end{array}
$$
We will determine which of these hyperplanes isolate $\Newt(G_{h,\lambda}(\x))$.  

\begin{lemma}\label{lem.facetsofG} 
Let $I$ be a proper nonempty subset of $[m]$, let $F=F(I)$ be a facet of $\Newt(s_{\lambda^{(b_k)}}(\x))$ defined by 
$\sum_{i\in I} x_i \leq \sum_{i=1}^{|I|} \lambda_i^{(b_k)}=L$ for some $k=1,\ldots,m$, and let $H(F)_1, H(F)_0$ be the hyperplanes associated to $F$. 
Then $H(F)_1$ isolates $\Newt(G_{h,\lambda}(\x))$ if $|I|\leq k$, and $H(F)_0$ isolates $\Newt(G_{h,\lambda}(\x))$ if $|I|\geq k$.
\end{lemma}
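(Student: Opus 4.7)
The proof plan is to invoke Proposition~\ref{proposition_vertices_of_sgp}, which identifies $\Newt(G_{h,\lambda}(\x))$ as the convex hull of the $S_m$-orbits of the partitions $\lambda^{(b_1)},\ldots,\lambda^{(b_m)}$, where $\lambda^{(b_j)} = (\lambda_1+a_1,\ldots,\lambda_j+a_j,\lambda_{j+1},\ldots,\lambda_m)$. To show that a hyperplane isolates $\Newt(G_{h,\lambda}(\x))$, it therefore suffices to check the corresponding inequality at every vertex from these orbits. Since the linear functionals $\sum_{i\in I}x_i$ and $\sum_{i\notin I}x_i$ depend only on $I$, the rearrangement inequality gives, for any weakly decreasing partition $\mu=(\mu_1,\ldots,\mu_m)$,
\[
\max_{\sigma\in S_m}\sum_{i\in I}\mu_{\sigma(i)} \;=\; \sum_{i=1}^{|I|}\mu_i \qquad\text{and}\qquad \min_{\sigma\in S_m}\sum_{i\notin I}\mu_{\sigma(i)} \;=\; \sum_{i=|I|+1}^m\mu_i.
\]
So the lemma reduces to two uniform partial-sum comparisons of the $\lambda^{(b_j)}$ against $\lambda^{(b_k)}$.

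First, suppose $|I|\le k$. Since the indices $1,\ldots,|I|$ all lie in the ``$+a_i$'' range of $\lambda^{(b_k)}$, we have $L = \sum_{i=1}^{|I|}(\lambda_i+a_i)$. For any $j$, the partial sum $\sum_{i=1}^{|I|}\lambda_i^{(b_j)}$ equals $L$ when $j\ge|I|$ and falls short of $L$ by the nonnegative quantity $\sum_{i=j+1}^{|I|}a_i$ when $j<|I|$. Therefore every vertex of $\Newt(G_{h,\lambda}(\x))$ satisfies $\sum_{i\in I}x_i \le L$, which shows that $H(F)_1$ isolates $\Newt(G_{h,\lambda}(\x))$.

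The dual case $|I|\ge k$ is handled symmetrically. Now the indices $|I|+1,\ldots,m$ all lie beyond the ``$+a_i$'' range of $\lambda^{(b_k)}$, so $|\lambda^{(b_k)}|-L = \sum_{i=|I|+1}^m\lambda_i$. For any $j$, the partial sum $\sum_{i=|I|+1}^m\lambda_i^{(b_j)}$ equals this value when $j\le|I|$ and exceeds it by the nonnegative quantity $\sum_{i=|I|+1}^j a_i$ when $j>|I|$. Consequently every vertex of $\Newt(G_{h,\lambda}(\x))$ satisfies $\sum_{i\notin I}x_i \ge |\lambda^{(b_k)}|-L$, so $H(F)_0$ isolates $\Newt(G_{h,\lambda}(\x))$.

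The main (mild) obstacle is bookkeeping: the hyperplanes $H(F)_1$ and $H(F)_0$ coincide on the affine layer $H_{|\lambda^{(b_k)}|}$ containing $\Newt(s_{\lambda^{(b_k)}}(\x))$ but differ as hyperplanes in $\mathbb{R}^m$, so one must correctly identify the isolating hyperplane in each regime. The dichotomy $|I|\le k$ vs.~$|I|\ge k$ is exactly the criterion that determines whether vertices on other layers push the orbit in a direction compatible with $H(F)_1$ or $H(F)_0$; the boundary case $|I|=k$ is covered by both, since there the two hyperplanes agree at $\lambda^{(b_k)}$ via the identity $\sum_{i=1}^m x_i = |\lambda^{(b_k)}|$.
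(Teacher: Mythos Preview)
Your proof is correct and follows essentially the same approach as the paper's: reduce to checking the inequality at the vertices (the $S_m$-orbits of $\lambda^{(b_1)},\ldots,\lambda^{(b_m)}$), use the rearrangement inequality (which the paper cites as Rado's inequalities) to reduce the orbit maximum/minimum to a partial sum of the partition itself, and then compare partial sums of $\lambda^{(b_j)}$ against those of $\lambda^{(b_k)}$. The only cosmetic difference is that the paper organizes the partial-sum comparison via coordinate-wise inequalities split by $\ell\le k$ versus $\ell\ge k$, whereas you compute the partial sums explicitly and split by $j<|I|$ versus $j\ge|I|$; these are equivalent bookkeepings of the same computation.
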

\begin{proof}
As each $\Newt(s_{\lambda^{(b_\ell)}}(\x))$ is the convex hull of the $S_m$-orbit of $\lambda^{(b_\ell)}$, it suffices to show that the $S_m$-orbits of $\lambda^{(b_\ell)}$ for $\ell=1,\ldots, m$ all lie on one side of 
the proposed  $\Newt(G_{h,\lambda}(\x))$-isolating hyperplane.

Let $\pi\in S_m$, so that $\pi(\lambda^{(b_\ell)}) = (\lambda_{\pi^{-1}(1)}^{(b_\ell)}, \ldots, \lambda_{\pi^{-1}(m)}^{(b_\ell)})$. Then
$$\sum_{i\in I} \pi(\lambda^{(b_\ell)})_i
= \sum_{j \in \pi^{-1}(I)} \lambda^{(b_\ell)}_j
\leq \sum_{i=1}^{|I|} \lambda_i^{(b_\ell)},
$$
by Rado's inequalities.

As the partition $\lambda^{(b_k)}$ is obtained by adding the maximum allowable number ($a_\ell$) of boxes to the $\ell$-th row of $\lambda$ for $\ell=1,\ldots k$, then for $i\leq k$,
\begin{align*}
\lambda_i^{(b_\ell)} \leq \lambda_i^{(b_k)}, 
	&\hbox{ for } \ell \leq k,\\
\lambda_i^{(b_\ell)} = \lambda_i^{(b_k)}, 
	&\hbox{ for } \ell \geq k.
\end{align*}
And for $i>k$,
\begin{align*}
\lambda_i^{(b_\ell)} = \lambda_i^{(b_{k})}, 
	&\hbox{ for } \ell\leq k,\\
\lambda_i^{(b_\ell)} \geq \lambda_i^{(b_{k})}, 
	&\hbox{ for } \ell\geq k.	
\end{align*}
If $|I|\leq k$, then $H(F)_{1}$ isolates $\Newt(s_{\lambda^{(b_\ell)}}(\x))$ for all $\ell=0,\ldots,N$ since
$$\sum_{i\in I} \pi(\lambda^{(b_\ell)})_i
\leq \sum_{i=1}^{|I|} \lambda_i^{(b_\ell)}
\leq \sum_{i=1}^{|I|} \lambda_i^{(b_k)}
= L.$$
In other words, $\Newt(G_{h,\lambda}(\x)) \subseteq \overline{(H(F)_{1})_-}$.
Similarly, if $|I|\geq k,$ then $H(F)_{0}$ isolates $\Newt(s_{\lambda^{(b_\ell)}}(\x))$ for all $\ell=0,\ldots,N$ since
$$\sum_{i\notin I} \pi(\lambda^{(b_\ell)})_i
= \sum_{j\notin \pi^{-1}(I)} \lambda_j^{(b_\ell)}
\geq \sum_{i=|I|+1}^m \lambda_i^{(b_\ell)}
\geq \sum_{i=|I|+1}^m \lambda_i^{(b_k)}
= |\lambda^{(b_k)}|-L,$$
and $\Newt(G_{h,\lambda}(\x)) \subseteq \overline{(H(F)_{0})_+}$.
The result now follows.
\end{proof}

We will see that nearly all of the $\Newt(G_{h,\lambda}(\x))$-isolating hyperplanes of Lemma~\ref{lem.facetsofG} are facet-defining.  We first identify some that are not.
\begin{lemma}\label{lem.exclude1}
For $j=1,\ldots, m$, the affine hyperplanes
$$\mathcal{J}_j = \left\{ \x\in \mathbb{R}^m \mid \hbox{$\sum_{i\neq j} x_i = |\lambda|-\lambda_1$} \right\}$$
are not facet-defining hyperplanes of $\Newt(G_{h,\lambda}(\x))$.
\end{lemma}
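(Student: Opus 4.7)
The plan is to show that the intersection $\mathcal{J}_j \cap \Newt(G_{h,\lambda}(\x))$ is a face of dimension at most $m-2$, whereas $\Newt(G_{h,\lambda}(\x))$ is $m$-dimensional, so any facet must be $(m-1)$-dimensional. This is enough to rule $\mathcal{J}_j$ out as facet-defining.

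First I would identify which vertices of $\Newt(G_{h,\lambda}(\x))$ lie on $\mathcal{J}_j$. By Proposition~\ref{proposition_vertices_of_sgp}, these vertices are the $S_m$-orbits of $\lambda^{(b_1)}, \ldots, \lambda^{(b_m)}$. Rewriting the defining equation of $\mathcal{J}_j$ using $\sum_i \pi(\lambda^{(b_\ell)})_i = |\lambda| + b_\ell$, a vertex $\pi(\lambda^{(b_\ell)})$ lies on $\mathcal{J}_j$ if and only if $\pi(\lambda^{(b_\ell)})_j = \lambda_1 + b_\ell$. The key structural observation is that by Definition~\ref{defn.partition_sequence} no box can ever be added to the first row, since the condition $\lambda_1^{(k-1)} - \lambda_1 < h(1-1) = 0$ is never satisfied; hence $a_1 = 0$ and $\lambda_1^{(b_\ell)} = \lambda_1$ is the maximum part of $\lambda^{(b_\ell)}$. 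Consequently $\pi(\lambda^{(b_\ell)})_j \leq \lambda_1$, forcing $b_\ell = 0$. Thus the only vertices of $\Newt(G_{h,\lambda}(\x))$ on $\mathcal{J}_j$ are the permutations of $\lambda = \lambda^{(0)}$ whose $j$-th coordinate equals $\lambda_1$.

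All such vertices lie in the hyperplane $\{\sum_i x_i = |\lambda|\}$, which is distinct from $\mathcal{J}_j$ (one may assume $\lambda \neq 0$, else $\Newt(G_{h,\lambda}(\x))$ is a point and the claim is trivial, so $\lambda_1 > 0$ and the two hyperplanes have linearly independent normals). Since $\mathcal{J}_j$ isolates $\Newt(G_{h,\lambda}(\x))$ (via Lemma~\ref{lem.facetsofG} applied to the facet $x_j \leq \lambda_1$ of $\Newt(s_\lambda(\x))$), the intersection $\mathcal{J}_j \cap \Newt(G_{h,\lambda}(\x))$ is the face whose vertices are exactly those just enumerated, so it lies in the codimension-two subspace $\mathcal{J}_j \cap \{\sum_i x_i = |\lambda|\}$ and has dimension at most $m-2$. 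On the other hand, $\Newt(G_{h,\lambda}(\x))$ is $m$-dimensional: the $S_m$-orbit of the reduced partition $\lambda$ (with $\lambda_1 > 0 = \lambda_m$) spans the full hyperplane $\sum_i x_i = |\lambda|$, and $\lambda^{(N)}$ with $N \geq 1$ lies off this hyperplane. A facet of $\Newt(G_{h,\lambda}(\x))$ would therefore have dimension $m-1$, and we conclude $\mathcal{J}_j$ is not facet-defining.

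I do not anticipate a serious obstacle: the proof rests entirely on the elementary but crucial observation that the first row of $\lambda$ is never augmented in the dominating sequence, capping each coordinate at $\lambda_1$ and pushing the intersection with $\mathcal{J}_j$ down to the bottommost slice $\Newt(s_\lambda(\x))$, which already has codimension one in $\Newt(G_{h,\lambda}(\x))$.
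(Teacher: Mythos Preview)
Your proposal is correct and follows essentially the same route as the paper: both arguments invoke Lemma~\ref{lem.facetsofG} to see that $\mathcal{J}_j$ is supporting, then observe that the face $\mathcal{J}_j\cap\Newt(G_{h,\lambda}(\x))$ is confined to the bottom slice $\Newt(s_\lambda(\x))$ (the paper phrases this as $\sum_{i\ge 2}\lambda_i^{(\ell)}>\sum_{i\ge 2}\lambda_i$ for $\ell>0$, you phrase it as $a_1=0$ forcing $b_\ell=0$), and hence has dimension at most $m-2$. Your write-up is somewhat more explicit about why the vertex set on $\mathcal{J}_j$ sits entirely in the $k=0$ layer, but the underlying idea is identical.
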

\begin{proof}
By Lemma~\ref{lem.facetsofG}, the hyperplanes $\mathcal{J}_j$ isolate $\Newt(G_{h,\lambda}(\x))$.  
However, since $\sum_{i=2}^m \lambda_i^{(\ell)} $ $> \sum_{i=2}^m \lambda_i$ for all $\lambda^{(\ell)} \neq \lambda$,  then
$$\mathcal{J}_j \cap \Newt(G_{h,\lambda}(\x)) 
=\conv \left\{(\lambda_1, \lambda_{\sigma(2)}, \ldots, \lambda_{\sigma(m)})\mid \sigma \in S_{\{2,\ldots, m\}} \right\} \subseteq \Newt(s_\lambda(\x)),$$
so $\dim (\mathcal{J}_j \cap \Newt(G_{h,\lambda}(\x))) < m-1$ for $j=1,\ldots, m$, and $\mathcal{J}_j$ is not a facet-defining hyperplane of $\Newt(G_{h,\lambda}(\x))$.
\end{proof}

We can further narrow down the set of $\Newt(G_{h,\lambda}(\x))$-isolating hyperplanes of Lemma \ref{lem.facetsofG} that are facet-defining.
\begin{lemma}\label{lem.exclude2}
The facet-defining hyperplanes of $\Newt(s_{\lambda^{(b_m)}}(\x))$ that isolate $\Newt(G_{h,\lambda}(\x))$ are a subset of the facet-defining hyperplanes of $\Newt(s_{\lambda^{(b_{m-1})}}(\x))$ that isolate $\Newt(G_{h,\lambda}(\x))$.
\end{lemma}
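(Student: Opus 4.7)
The plan hinges on the observation, from Proposition~\ref{proposition_vertices_of_sgp}, that
$\lambda^{(b_{m-1})} = (\lambda_1+a_1,\ldots,\lambda_{m-1}+a_{m-1}, \lambda_m)$ and $\lambda^{(b_m)} = (\lambda_1+a_1,\ldots,\lambda_{m-1}+a_{m-1}, \lambda_m+a_m)$ agree in their first $m-1$ coordinates. Consequently, for every proper nonempty $I \subset [m]$, which necessarily has $|I| \leq m-1$, we have $\sum_{i=1}^{|I|} \lambda_i^{(b_m)} = \sum_{i=1}^{|I|} \lambda_i^{(b_{m-1})}$, so the hyperplane $H(F)_1$ associated with index set $I$ is literally the same subset of $\R^m$ whether $F$ is regarded as a facet of $\Newt(s_{\lambda^{(b_m)}}(\x))$ or of $\Newt(s_{\lambda^{(b_{m-1})}}(\x))$.

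Next I would invoke Lemma~\ref{lem.facetsofG} at $k = m$: only hyperplanes of the form $H(F)_1$ can isolate $\Newt(G_{h,\lambda}(\x))$, since $|I| \geq m$ never holds for proper $I$. It therefore suffices to show that whenever $F = F(I)$ is a facet of $\Newt(s_{\lambda^{(b_m)}}(\x))$, the same index set $I$ defines a facet of $\Newt(s_{\lambda^{(b_{m-1})}}(\x))$; the resulting isolating property comes for free from Lemma~\ref{lem.facetsofG} at $k = m-1$.

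I would prove this implication by contraposition, using the degeneracy conditions of Theorem~\ref{prop:facets} and a case analysis on $|I|$. Case (c) and the first alternative of case (b) involve only the first $|I|$ parts of the partition, and these transfer directly since the first $m-1$ parts of $\lambda^{(b_m)}$ and $\lambda^{(b_{m-1})}$ coincide. The remaining degeneracies---case (a) and the second alternative of case (b)---concern the last $m-|I|$ parts of $\lambda^{(b_{m-1})}$, and since $\lambda_m = 0$ any such degeneracy forces those parts all to vanish; in particular it yields $\lambda_{m-1} = 0$ and $a_{m-1} = 0$.

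The key remaining step is to show that $\lambda_{m-1} = 0$ and $a_{m-1} = 0$ force $a_m = 0$, so that the analogous last-parts degeneracy also holds for $\lambda^{(b_m)}$, contradicting the hypothesis that $F$ is a facet of $\Newt(s_{\lambda^{(b_m)}}(\x))$. This falls out cleanly from the greedy construction in Definition~\ref{defn.partition_sequence}: the hypothesis yields $\lambda_{m-1}^{(k)} = 0$ at every step $k$, whereas adding a box to row $m$ requires $\lambda_m^{(k-1)} + 1 \leq \lambda_{m-1}^{(k-1)}$ for the result to remain a valid partition---impossible when $\lambda_{m-1}^{(k-1)} = 0$. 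Hence no box is ever added to row $m$, so $a_m = 0$. The main obstacle is really just organizing the case analysis; the construction-based lemma on $a_m$ is the only genuinely new ingredient and I do not expect it to present difficulty.
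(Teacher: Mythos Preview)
Your argument is correct and rests on the same core observation as the paper's proof: $\lambda^{(b_{m-1})}$ and $\lambda^{(b_m)}$ differ only in their $m$-th part. The paper dispatches the lemma in two sentences by simply noting this fact, whereas you supply the missing details---in particular, the contrapositive case analysis via Theorem~\ref{prop:facets} showing that every $I$ defining a facet of $\Newt(s_{\lambda^{(b_m)}}(\x))$ also defines a facet of $\Newt(s_{\lambda^{(b_{m-1})}}(\x))$, and the clean construction-based argument that $\lambda_{m-1}=a_{m-1}=0$ forces $a_m=0$. One small caveat: Lemma~\ref{lem.facetsofG} only gives \emph{sufficient} conditions for isolation, so your sentence ``only hyperplanes of the form $H(F)_1$ can isolate'' does not follow from it as stated. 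In the paper's framework, though, the lemma is cataloguing the candidate isolating hyperplanes to be used downstream, and for $k=m$ that catalogue contains only the $H(F)_1$ type---so your reading matches the intended use, and the remainder of your argument goes through.
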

\begin{proof}
Recall that $\lambda^{(b_m)}=\lambda^{(N)}$ and it is distinct from $\lambda^{(b_{m-1})}$ since we assumed that $\lambda_m=0$.  The only difference between the partitions $\lambda^{(b_{m-1})}$ and $\lambda^{(b_m)}$ is in their $m$-th row, so no additional $\Newt(G_{h,\lambda}(\x))$-isolating hyperplane that arises from a facet of $\Newt(s_{\lambda^{(b_m)}}(\x))$ is introduced.
\end{proof}

\begin{theorem}\label{thm.facetsofG}
Let $I$ be a proper nonempty subset of $[m]$ and let $F=F(I)$ be a facet of $\Newt(s_{\lambda^{(b_k)}}(\x))$ defined by 
$\sum_{i\in I} x_i \leq \sum_{i=1}^{|I|} \lambda_i^{(b_k)}=L$ for some $k=1,\ldots,m$.
Suppose $H(F)$ is a hyperplane associated to $F$ which isolates $\Newt(G_{h,\lambda}(\x))$ and is not of the form $\mathcal{J}_j$.  Then $H(F)$ is a facet-defining hyperplane of $\Newt(G_{h,\lambda}(\x))$. 
\end{theorem}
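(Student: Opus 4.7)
My plan is to show that $H(F) \cap \Newt(G_{h,\lambda}(\x))$ has dimension $m-1$; combined with the hypothesis that $H(F)$ isolates the $m$-dimensional polytope $\Newt(G_{h,\lambda}(\x))$, this will make $H(F)$ facet-defining. The facet $F$ already sits inside $H(F) \cap \Newt(G_{h,\lambda}(\x))$ and has dimension $m-2$ inside the layer hyperplane $\{\x : \sum_i x_i = |\lambda^{(b_k)}|\}$, so it suffices to produce one further witness point $\bv \in H(F) \cap \Newt(G_{h,\lambda}(\x))$ with $\sum_i v_i \neq |\lambda^{(b_k)}|$. By Proposition~\ref{proposition_vertices_of_sgp} I may take $\bv$ of the form $\pi(\lambda^{(b_\ell)})$ for some $\ell$ with $b_\ell \neq b_k$ and a suitable $\pi \in S_m$.

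After the harmless reduction that $k = 1$ or $a_k > 0$ (since $a_k = 0$ yields $\lambda^{(b_k)} = \lambda^{(b_{k-1})}$ and $F$ is the same facet at index $k-1$), I case-split via Lemma~\ref{lem.facetsofG}. When $H(F) = H(F)_1$ and $|I| \leq k$: using that $\lambda_j^{(b_\ell)} = \lambda_j + a_j$ for $j \leq \ell$ and $\lambda_j^{(b_\ell)} = \lambda_j$ otherwise, one checks that whenever $\ell \geq |I|$, the permutation $\pi$ with $\pi^{-1}(I) = \{1, \ldots, |I|\}$ gives $\sum_{i \in I} \pi(\lambda^{(b_\ell)})_i = L$. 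The choice $\ell = k-1$ works when $k \geq 2$ and $|I| < k$, and $\ell = m$ works when $k = 1$ (forcing $|I| = 1$), giving $b_m = N > 0$. Dually, when $H(F) = H(F)_0$ and $|I| \geq k$: for any $\ell \leq |I|$, a permutation $\pi$ with $\pi^{-1}([m] \setminus I) = \{|I|+1, \ldots, m\}$ achieves $\sum_{i \notin I} \pi(\lambda^{(b_\ell)})_i = \sum_{j=|I|+1}^m \lambda_j = |\lambda^{(b_k)}| - L$. The choice $\ell = k-1$ works when $k \geq 2$, while for $k = 1$ with $|I| \geq 2$ any $\ell \in \{2, \ldots, |I|\}$ with $b_\ell > 0$ works.

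The main obstacle is ruling out the edge configurations where no such $\ell$ exists. In Case $H(F) = H(F)_1$ with $|I| = k$ and $a_{k+1} = \cdots = a_m = 0$, Definition~\ref{defn.partition_sequence} implies that the termination at row $k+1$ must come from the partition constraint (since $h(r-1) > 0$ rules out row $k+1$ being ``full''), so $\lambda_{k+1} \geq \lambda_k^{(N)} = \lambda_k + a_k > \lambda_k$, contradicting the partition order. In Case $H(F) = H(F)_0$ with $k = 1$, $|I| \geq 2$, and $a_2 = \cdots = a_{|I|} = 0$, iterating the same partition-constraint argument forces $\lambda_1 = \lambda_2 = \cdots = \lambda_{|I|}$, which by Theorem~\ref{prop:facets} contradicts the hypothesis that $F$ is a facet of $\Newt(s_\lambda(\x))$. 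The one remaining potential exception is $k = 1$, $|I| = 1$ in Case $H(F)_0$, which gives precisely $H(F) = \mathcal{J}_j$ and is excluded by hypothesis. With all edge cases ruled out, the witness $\bv$ is produced in every case, and $H(F)$ is facet-defining.
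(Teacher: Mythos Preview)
Your proof is correct and follows the same overall strategy as the paper: exhibit the $(m-2)$-dimensional facet $F \subset H(F) \cap \Newt(G_{h,\lambda}(\x))$ together with one additional point in $H(F)$ lying in a different layer $\sum_i x_i \neq |\lambda^{(b_k)}|$. Where you diverge is in the choice of witness. The paper takes $\pi(\lambda^{(b_k+1)})$ for $H(F)_1$ and $\pi(\lambda^{(b_k-1)})$ for $H(F)_0$, using an arbitrary term of the dominating sequence rather than a vertex $\lambda^{(b_\ell)}$; since $\lambda^{(b_k \pm 1)}$ differs from $\lambda^{(b_k)}$ by a single box in row $k+1$ (resp.\ row $k$), the check that the relevant partial sum equals $L$ (resp.\ $|\lambda^{(b_k)}|-L$) is immediate, and the only boundary issue is whether $b_k\pm 1 \in \{0,\ldots,N\}$. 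Your restriction to vertices is legitimate (a face of dimension $m-1$ must contain a vertex off the layer $b_k$) but forces the longer case analysis on $\ell$ and the attendant edge cases. On the other hand, your explicit edge-case arguments---showing that $a_{k+1}=0$ with $a_k>0$ forces $\lambda_{k+1}>\lambda_k$, and that $a_2=\cdots=a_{|I|}=0$ forces $\lambda_1=\cdots=\lambda_{|I|}$ so $F$ is not a facet---are more transparent than the paper's terse invocation of Lemma~\ref{lem.exclude2} in the $H(F)_0$, $k=1$ situation. One small point to tighten: your ``harmless reduction'' can in principle push $k$ below $|I|$ in the $H(F)_1$ case, so that your subsequent condition $|I|\le k$ no longer holds; you should note that when this happens (i.e.\ $|I|\ge 2$ with $a_2=\cdots=a_{|I|}=0$) your own partition-constraint argument gives $\lambda_1=\cdots=\lambda_{|I|}$, whence $F$ fails to be a facet by Theorem~\ref{prop:facets} and the hypothesis is vacuous.
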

\begin{proof}
First, suppose $H(F)$ is of the form $H(F)_1$ so that by Lemma~\ref{lem.facetsofG}, $\Newt(G_{h,\lambda}(\x))$ lies in the half-space
$$\overline{(H(F)_1)_-} = \left\{\x\in \mathbb{R}^m \mid \hbox{$\sum_{i\in I} x_i \leq \sum_{i=1}^{|I|} \lambda_i^{(b_k)}=L$} \right\},$$
with $|I|\leq k$. Furthermore by Lemma~\ref{lem.exclude2}, it suffices to assume $k=1,\ldots, m-1$.
So if $\pi\in S_m$ is a permutation such that $\pi^{-1}(I)= \{1,\ldots, |I|\}$, then
$$\sum_{i\in I} \pi(\lambda^{(b_k+1)})_i 
= \sum_{j\in \pi^{-1}(I)} \lambda^{(b_k+1)}_j 
= \sum_{i=1}^{|I|} \lambda^{(b_k+1)}_i
= \sum_{i=1}^{|I|} \lambda_i^{(b_k)}
=L,$$
noting that the partition $\lambda^{(b_k+1)}$ exists since $k\leq m-1$.
Thus $\pi(\lambda^{(b_k+1)})$ is a point that lies on $H(F)_1 \cap \Newt(G_{h,\lambda}(\x))$, but not on $\Newt(s_{\lambda^{(b_k)}}(\x))$.

Second, suppose $H(F)$ is of the form $H(F)_0$ so that by Lemma~\ref{lem.facetsofG}, $\Newt(G_{h,\lambda}(\x))$ lies in the half-space
$$\overline{(H(F)_0)_+} = \left\{\x\in \mathbb{R}^m \mid \hbox{$\sum_{i\notin I} x_i \geq \sum_{i=|I|+1}^m \lambda_i^{(b_k)}=|\lambda^{(b_k)}|-L$} \right\},$$
with $|I|\geq k$. 
Furthermore by Lemma~\ref{lem.exclude2}, it suffices to assume $k=2,\ldots, m$.
So if $\pi\in S_m$ is a permutation such that $\pi^{-1}(I)= \{1,\ldots, |I|\}$, then
$$\sum_{i\notin I} \pi(\lambda^{(b_k-1)})_i 
= \sum_{j\notin \pi^{-1}(I)} \lambda^{(b_k-1)}_j 
= \sum_{i=|I|+1}^{m} \lambda^{(b_k-1)}_i
= \sum_{i=|I|+1}^{m} \lambda^{(b_k)}_i
=|\lambda^{(b_k)}|-L,$$
noting that the partition $\lambda^{(b_k-1)}$ exists since $k\geq 2$.
Thus $\pi(\lambda^{(b_k-1)})$ is a point that lies on $H(F)_0 \cap \Newt(G_{h,\lambda}(\x))$, but not on $\Newt(s_{\lambda^{(b_k)}}(\x))$.

In both of these cases, note that $\dim(H(F) \cap \Newt(s_{\lambda^{(b_k)}}(\x))) = m-2$ because it is a facet of $\Newt(s_{\lambda^{(b_k)}}(\x))$.  
Also, $\Newt(s_{\lambda^{(b_k)}}(\x))$ lies on the hyperplane $x_1+\cdots +x_m = |\lambda^{(b_{k})}|$ while $\p=\pi(\lambda^{(b_k\pm1)})$ is a point on $H(F)$ that lies on $x_1+\cdots +x_m = |\lambda^{(b_k\pm1)}|$.
Thus
$$m-1 \geq \dim(H(F)\cap \Newt(G_{h,\lambda}(\x))) 
\geq \dim \conv \left(\left( H(F)\cap \Newt(s_{\lambda^{(b_k)}}(\x)) \right) \cup \p \right) > m-2.$$
Therefore, $\dim(H(F) \cap \Newt(G_{h,\lambda}(\x))) = m-1$ and $H(F)$ is a facet-defining hyperplane of $\Newt(G_{h,\lambda}(\x))$.
\end{proof}

\begin{corollary}\label{cor.soneisreflexive}
If $\Newt(G_{h,\lambda}(\x))$ is reflexive, then so is $\Newt(s_{\lambda^{(1)}}(
\x))$. 
\end{corollary}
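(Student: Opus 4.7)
The plan is to use the reflexivity of $\Newt(G_{h,\lambda}(\x))$ to transfer lattice distance information to the slice $\Newt(s_{\lambda^{(1)}}(\x)) = \Newt(G_{h,\lambda}(\x)) \cap H_{|\lambda|+1}$. By Corollary~\ref{cor.uonone}, the unique interior lattice point $\u = (u,\ldots,u)$ of $\Newt(G_{h,\lambda}(\x))$ already lies on this slice. Because $\Newt(G_{h,\lambda}(\x))$ is $m$-dimensional (its vertices are spread across several slices $H_{|\lambda|+k}$) and $\u$ is in its topological interior, intersecting a small neighborhood with $H_{|\lambda|+1}$ immediately shows that $\u$ is in the relative interior of $\Newt(s_{\lambda^{(1)}}(\x))$. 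It then remains to check that $\u$ is lattice distance $1$ from every facet of the slice.

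For this I would fix a facet $F$ of $\Newt(s_{\lambda^{(1)}}(\x))$ defined by $\sum_{i\in I} x_i \leq L$ with $L = \sum_{j=1}^{|I|}\lambda^{(1)}_j$, and produce a facet-defining hyperplane $H'$ of $\Newt(G_{h,\lambda}(\x))$ whose intersection with $H_{|\lambda|+1}$ equals $\aff(F)$. The argument splits on the row $r_1$ from Definition~\ref{defn.partition_sequence} to which the first box is added in forming $\lambda^{(1)}$ from $\lambda$. Using that $r_1 \leq r_2 \leq \cdots$ is non-decreasing, in the case $r_1 > |I|$ no box is ever added to rows $1,\ldots,|I|$, so every $\lambda^{(\ell)}$ agrees with $\lambda$ on its first $|I|$ parts; this forces the hyperplane $H' := \{\sum_{i\in I} x_i = L\}$ to realize the global maximum of $\sum_{i\in I} x_i$ over $\Newt(G_{h,\lambda}(\x))$, and its intersection contains the corresponding $I$-faces of multiple slices, so $H'$ is facet-defining. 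In the case $r_1 \leq |I|$, $\lambda$ and $\lambda^{(1)}$ agree on rows beyond $|I|$, so $H' := \{\sum_{i\notin I} x_i = \sum_{j=|I|+1}^m \lambda_j\}$ is exactly the $H(F')_0$ hyperplane associated to a facet $F'$ of $\Newt(s_\lambda(\x)) = \Newt(s_{\lambda^{(b_1)}}(\x))$ with the same $I$. Since $|I| \geq r_1 \geq 2$, $H'$ is not of the excluded form $\mathcal{J}_j$, so Theorem~\ref{thm.facetsofG} yields that $H'$ is facet-defining for $\Newt(G_{h,\lambda}(\x))$.

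Once $H'$ is identified, reflexivity of $\Newt(G_{h,\lambda}(\x))$ gives that $\u$ is lattice distance $1$ from $H'$ in $\mathbb{R}^m$. Because $H'$ has primitive integer normal equal to the indicator vector of a proper nonempty subset of $[m]$, the defining linear functional surjects onto $\mathbb{Z}$ when restricted to the sublattice $\mathbb{Z}^m \cap H_{|\lambda|+1}$, so the lattice distance in $\mathbb{R}^m$ coincides with the lattice distance from $\u$ to $\aff(F) = H' \cap H_{|\lambda|+1}$ measured within the slice. This will suffice to conclude that $\Newt(s_{\lambda^{(1)}}(\x))$ is reflexive. The hardest part will be the $r_1 \leq |I|$ case: one must verify that the inequality $\sum_{i\in I} x_i \leq \sum_{j=1}^{|I|}\lambda_j$ actually defines a facet of $\Newt(s_\lambda(\x))$ (so that Theorem~\ref{thm.facetsofG} applies), and this reduces to deducing the conditions of Theorem~\ref{prop:facets} for $\lambda$ from those for $\lambda^{(1)}$, using that the two partitions differ only at row $r_1$ and both are partitions.
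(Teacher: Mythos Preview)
Your proposal is correct and follows essentially the same approach as the paper: identify, for each facet of the slice $\Newt(s_{\lambda^{(1)}}(\x))$, a facet-defining hyperplane of $\Newt(G_{h,\lambda}(\x))$ that cuts the slice in exactly that facet, and then transfer the lattice-distance-$1$ condition from the big polytope to the slice. Your split on $r_1>|I|$ versus $r_1\le|I|$ makes explicit what the paper compresses into the sentence ``by the arguments in Theorem~\ref{thm.facetsofG}, if $\Newt(s_{\lambda^{(1)}}(\x))\subseteq\overline{H_-}$ then $H$ is facet-defining for $\Newt(s_{\lambda^{(b_r)}}(\x))$, and if $\subseteq\overline{H_+}$ then for $\Newt(s_{\lambda^{(0)}}(\x))$''; your observation that the case $r_1>|I|$ forces $\lambda^{(1)}_1=\cdots=\lambda^{(1)}_{|I|}$ (so that in fact only $|I|=1$ survives there) and your explicit check that the primitive normal of $H'$ surjects onto $\mathbb{Z}$ on the sublattice of $H_{|\lambda|+1}$ are details the paper leaves implicit.
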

\begin{proof}
If $\Newt(G_{h,\lambda}(\x))$ is reflexive, then by Corollary~\ref{cor.uonone} its unique interior lattice point $\u$ lies on $\Newt(s_{\lambda^{(1)}}(\x))$, so $\u$ is also the unique interior lattice point of $\Newt(s_{\lambda^{(1)}}(\x))$.

Suppose $H$ is an affine hyperplane in $\mathbb{R}^m$ such that $H$ is a facet-defining hyperplane of $\Newt(s_{\lambda^{(1)}}(\x))$. 
The partitions $\lambda^{(0)}$ and $\lambda^{(1)}$ differ by exactly one box, and if this occurs in the $r$-th row, then $\lambda^{(0)} \subset \lambda^{(1)} \subseteq \lambda^{(b_r)}$ for some $r\geq2$. 
By the arguments in Theorem~\ref{thm.facetsofG}, if $\Newt(s_{\lambda^{(1)}}(\x))\subseteq \overline{H_-}$, then $H$ is a facet-defining hyperplane of $\Newt(s_{\lambda^{(b_r)}}(\x))$, and if $\Newt(s_{\lambda^{(1)}}(\x))\subseteq \overline{H_+}$, then $H$ is a facet-defining hyperplane of $\Newt(s_{\lambda^{(0)}}(\x))$.
Moreover, Theorem~\ref{thm.facetsofG} states that in either case, $H$ is a facet-defining hyperplane of $\Newt(G_{h,\lambda}(\x))$, and since it is reflexive, then $\u$ is lattice distance one from $H$.  Thus it follows that $\Newt(s_{\lambda^{(1)}}(\x))$ is reflexive.
\end{proof}

\begin{corollary}\label{cor.lambda_and_u}
If $\Newt(G_{h,\lambda}(\x))$ is reflexive and $\u=(u,\ldots,u)$ is its unique interior lattice point, then $\lambda_1 = u+1$.
\end{corollary}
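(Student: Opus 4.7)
The plan is to pinpoint an explicit facet-defining hyperplane of $\Newt(G_{h,\lambda}(\x))$ whose lattice distance from $\u$ directly encodes $\lambda_1 - u$. The natural candidate is $\{x_i = \lambda_1\}$ for any $i \in [m]$.

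The first observation is that the proof of Proposition~\ref{proposition_vertices_of_sgp} establishes $a_1 = 0$, and hence $\lambda^{(b_k)}_1 = \lambda_1$ for every $k = 1, \ldots, m$. Applying Rado's inequalities to each dominating layer $\Newt(s_{\lambda^{(b_k)}}(\x))$ then gives $x_i \le \lambda_1$ on all of $\Newt(G_{h,\lambda}(\x))$, so $\{x_i = \lambda_1\}$ isolates the polytope from above. I then wish to promote this isolating hyperplane to a facet-defining one via Theorem~\ref{thm.facetsofG}.

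For the promotion, two points must be checked: that $\{x_i = \lambda_1\}$ is facet-defining for some $\Newt(s_{\lambda^{(b_k)}}(\x))$, and that it is not of the form $\mathcal{J}_j$. The latter is a quick normal-vector comparison, since $e_i$ is not a scalar multiple of $\mathbf{1} - e_j$ once $m \ge 3$ (the case $m = 2$ with $\lambda_m = 0$ collapses to trivial input). For the former, Theorem~\ref{prop:facets}(a) handles $k = 1$ (i.e.\ the base layer $\Newt(s_\lambda(\x))$) whenever $(\lambda_2, \ldots, \lambda_m)$ is non-constant. The only reduced-by-translation exception is $\lambda = (\lambda_1, 0, \ldots, 0)$; in that case I instead consider the layer $\lambda^{(b_2)} = (\lambda_1, a_2, 0, \ldots, 0)$. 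A direct unwinding of Definition~\ref{defn.partition_sequence} shows $a_2 = \min(h, \lambda_1) \ge 1$, so $\lambda^{(b_2)}$ has a non-constant trailing block and Theorem~\ref{prop:facets}(a) delivers the facet at $k = 2$.

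With $\{x_i = \lambda_1\}$ confirmed as a facet-defining hyperplane of $\Newt(G_{h,\lambda}(\x))$, the conclusion is immediate: the primitive normal is $e_i$, so the lattice distance from $\u = (u, \ldots, u)$ to this facet equals $\lambda_1 - u$, and reflexivity forces this distance to be $1$. The main subtlety is the corner case $\lambda = (\lambda_1, 0, \ldots, 0)$, where Theorem~\ref{prop:facets}(a) fails at the base layer and one must descend one rung further in the sequence of dominating partitions; everything else is routine bookkeeping of normal vectors and lattice distances.
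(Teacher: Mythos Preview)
Your approach is essentially the same as the paper's: both identify $x_i = \lambda_1$ as the key facet-defining hyperplane of $\Newt(G_{h,\lambda}(\x))$ and read off $\lambda_1 - u = 1$ from reflexivity. The paper's proof simply asserts ``By Theorem~\ref{thm.facetsofG}, $x_i = \lambda_1$ is a facet-defining hyperplane,'' whereas you actually verify the hypotheses of that theorem, correctly spotting the edge case $\lambda = (\lambda_1, 0, \ldots, 0)$ where Theorem~\ref{prop:facets}(a) fails at the base layer and one must pass to $\lambda^{(b_2)}$. In that sense your write-up is more complete than the paper's.

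One small correction: your dismissal of $m = 2$ as ``trivial input'' is too quick, since Proposition~\ref{prop.m2} shows there \emph{are} reflexive polytopes in this case and it invokes the present corollary. Your normal-vector argument for excluding $\mathcal{J}_j$ also breaks down at $m = 2$, where $e_i = \mathbf{1} - e_j$ for $\{i,j\} = \{1,2\}$. The fix is easy: for $m = 2$ the hyperplanes $x_i = \lambda_1$ and $\mathcal{J}_j\colon x_i = 0$ are parallel but distinct (since $\lambda_1 \ge 1$), and one checks directly that the segment from $(\lambda_1, 0)$ to $(\lambda_1, a_2)$ is a facet of the two-dimensional polytope. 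Everything else goes through.
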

\begin{proof}
By Theorem~\ref{thm.facetsofG}, $x_i = \lambda_1$ is a facet-defining hyperplane of $\Newt(G_{h,\lambda}(\x))$ for $i=1,\ldots,m$.  
If $\Newt(G_{h,\lambda}(\x))$ is reflexive, then $\u$ is lattice distance one from these hyperplanes, so in particular, $|\lambda_1-u|=1$.
Note that $\u$ and $\lambda^{(1)}$ form a primitive pair, so $u < \lambda_1^{(1)} = \lambda_1$ implies $\lambda_1 = u+1$.
\end{proof}

\begin{example}
Let $\lambda = (2,1,0)$ and $h=1$. Recall from Example~\ref{eg.210a} that
$$\textcolor{blue}{\lambda^{(b_1)} = \ydiagram{2,1}}\quad
\textcolor{green}{\lambda^{(b_2)} = \ydiagram{2,2}} \quad\hbox{and}\quad
\textcolor{orange}{\lambda^{(b_3)} = \ydiagram{2,2,2}}$$
The facet-defining inequalities from Theorem~\ref{prop:facets} for the Newton polytopes $\Newt(s_{\lambda^{(b_k)}}(\x))$ are (the $S_3$ permutations of) the following:
$$\begin{array}{l|ccc}
& H(F)_{1} && H(F)_{0} \\\hline
\lambda^{(b_1)}
	& x_1 \leq 2	& \hbox{ or } & x_2+x_3 \geq 1\\
	& x_1+x_2 \leq 3 & \hbox{ or } & x_3 \geq 0\\
	\hline
\lambda^{(b_2)}
	& x_1\leq 2 & \hbox{ or } & x_2+x_3 \geq 2\\
	\hline
\lambda^{(b_3)}
	& \emptyset && \emptyset	
\end{array}
$$
Lemma~\ref{lem.facetsofG} states that $x_i =2$, $x_i+x_j =1$, and $x_i=0$ are the hyperplanes which isolate $\Newt(G_{1,\lambda}(\x))$.
Theorem~\ref{thm.facetsofG} states that only $x_i \leq2$ and $x_i\geq0$ are facet-defining inequalities of $\Newt(G_{1,\lambda}(\x))$; the hyperplanes $x_i+x_j=1$ isolate $\Newt(G_{1,\lambda}(\x))$, but are not facet-defining.
\end{example}

It remains to prove that the facet-defining inequalities of $\Newt(G_{h,\lambda}(\x))$ are precisely the ones appearing in Lemma~\ref{lem.frontandback} and Theorem~\ref{thm.facetsofG}.

\begin{theorem}\label{thm.ineqsofG}
The facets of $\Newt(G_{h,\lambda}(\x))$ are determined by the following inequalities.
\begin{enumerate}
\item[(a)] $\displaystyle \sum_{i=1}^m x_i \geq |\lambda|$.
\item[(b)] $\displaystyle \sum_{i=1}^m x_i \leq |\lambda|+N$, if the $S_m$-orbit of $\lambda^{(N)}$ is nontrivial.
\item[(c)] 
$\displaystyle \sum_{i\in I}x_i \leq \sum_{i=1}^{|I|}\lambda_i^{(b_k)}$, if 
$I$ is a nonempty proper subset of $[m]$, $|I|\leq k$, 
and the inequality is a facet-defining inequality of 
$\Newt(s_{\lambda^{(b_k)}}(\x))$.
\item[(d)] 
$\displaystyle \sum_{i\notin I}x_i \geq \sum_{i=|I|+1}^m\lambda_i^{(b_k)}$, if
$I$ is a nonempty proper subset of $[m]$, 
$|I|\geq k$, and the inequality is a facet-defining inequality of 
$\Newt(s_{\lambda^{(b_k)}}(\x))$.
\end{enumerate}
\end{theorem}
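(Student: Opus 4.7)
The plan is to assemble the preceding analysis (Lemmas \ref{lem.frontandback}, \ref{lem.facetsofG}, \ref{lem.exclude1}, \ref{lem.exclude2}, and Theorem \ref{thm.facetsofG}) into a two-direction argument: first showing that each listed inequality defines a facet, then showing that no other facets exist.

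For the sufficiency direction, inequalities (a) and (b) are exactly the facets of $\Newt(G_{h,\lambda}(\x))$ identified in Lemma \ref{lem.frontandback}. For (c) and (d), Lemma \ref{lem.facetsofG} shows that the hyperplanes $H(F)_1$ (with $|I| \leq k$) and $H(F)_0$ (with $|I| \geq k$) arising from facet-defining inequalities of $\Newt(s_{\lambda^{(b_k)}}(\x))$ isolate $\Newt(G_{h,\lambda}(\x))$, and Theorem \ref{thm.facetsofG} establishes that each such isolating hyperplane is in fact facet-defining (with the $\mathcal{J}_j$ exclusion of Lemma \ref{lem.exclude1} absorbed into the requirement that the inequality come from a genuine facet of the relevant $\Newt(s_{\lambda^{(b_k)}}(\x))$).

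For the necessity direction, I would fix a facet $F$ of $\Newt(G_{h,\lambda}(\x))$ with supporting hyperplane $H$ and analyze cases based on whether $H$ is one of the layer hyperplanes $H_{|\lambda|+b_k} = \{\x \in \mathbb{R}^m : \sum_i x_i = |\lambda|+b_k\}$. By Proposition \ref{proposition_vertices_of_sgp}, every vertex of $\Newt(G_{h,\lambda}(\x))$ lies in the $S_m$-orbit of some $\lambda^{(b_k)}$ and thus on one of these layer hyperplanes. If $H$ coincides with a layer hyperplane $H_{|\lambda|+b_k}$, then $F = \Newt(s_{\lambda^{(b_k)}}(\x))$, but this can only yield a true facet when $k=1$ (giving (a)) or $k=m$ (giving (b)), since intermediate layers are sandwiched strictly between larger and smaller $\sum x_i$-values on the polytope. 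Otherwise $H$ crosses multiple layers, and a dimension-counting argument shows $H \cap \Newt(s_{\lambda^{(b_k)}}(\x))$ must be an $(m-2)$-dimensional face, hence a facet, of $\Newt(s_{\lambda^{(b_k)}}(\x))$ for at least one $k$. Applying Theorem \ref{prop:facets} to this facet forces $H$ to equal either $H(F)_1$ or $H(F)_0$ for some subset $I \subset [m]$, and Lemma \ref{lem.facetsofG} then imposes $|I| \leq k$ in the $H(F)_1$ case (matching (c)) or $|I| \geq k$ in the $H(F)_0$ case (matching (d)).

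The main obstacle is the dimension-counting step in the non-layer case: establishing that any facet $F$ whose supporting hyperplane is not a layer hyperplane must intersect at least one layer $\Newt(s_{\lambda^{(b_k)}}(\x))$ in a face of dimension $m-2$. This uses the fact that the $\sum x_i$-coordinate is nonconstant on $F$, so $F$ stratifies into nonempty slices $F \cap H_{|\lambda|+b_k}$ across a contiguous range of values of $k$; if every such slice had dimension at most $m-3$, the convex hull of their union could not attain dimension $m-1$, contradicting that $F$ is a facet. Once this is in hand, the characterization in Theorem \ref{prop:facets} and the isolation analysis of Lemma \ref{lem.facetsofG} completely pin down $H$ to one of the four listed forms, completing the proof.
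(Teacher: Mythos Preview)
Your overall architecture---sufficiency from Lemmas \ref{lem.frontandback}, \ref{lem.facetsofG}, and Theorem \ref{thm.facetsofG}, then necessity by classifying an arbitrary facet-defining hyperplane $H$---matches the paper's approach. The gap is in your dimension-counting step.

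The claim ``if every slice $F\cap H_{|\lambda|+b_k}$ had dimension at most $m-3$, the convex hull of their union could not attain dimension $m-1$'' is false in general. A convex polytope can easily be the convex hull of low-dimensional slices at several parallel levels while still being full-dimensional: for instance, in $\R^3$ the convex hull of a segment at height $1$ and a transverse segment at height $2$ is a $3$-dimensional tetrahedron, even though each slice is $1$-dimensional. So a pure dimension bound on the slices does not force any single slice to have dimension $m-2$. A related issue appears in your next step: even once you know some slice $F\cap \Newt(s_{\lambda^{(b_k)}}(\x))$ is an $(m-2)$-dimensional facet of the permutohedron, this does not by itself pin down $H$ among the pencil of hyperplanes in $\R^m$ containing that $(m-2)$-flat; you still need an argument to conclude $H=H(F)_1$ or $H=H(F)_0$ rather than some other member of the pencil.

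The paper circumvents both problems by first arguing that $H$ must already have the Rado form $\sum_{i\in I}x_i=a$, then using $S_m$-symmetry to take $I=\{1,\ldots,r\}$. With this explicit form in hand, the paper chooses the extreme index $k$ for which $\lambda^{(b_k)}\in H$, shows that then $\lambda^{(b_\ell)}\in H$ for all $\ell$ on one side of $k$, and exploits the resulting product structure of $H\cap\Newt(G_{h,\lambda}(\x))$ (permutations of the first $r$ parts times permutations of the last $m-r$ parts across several levels) to compute its dimension as $s+(m-r)$. Setting this equal to $m-1$ forces $s=r-1$, and then the same product description gives $\dim(H\cap\Newt(s_{\lambda^{(b_k)}}(\x)))=(r-1)+(m-r-1)=m-2$. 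This structural computation is what replaces your dimension-counting shortcut, and it simultaneously identifies $H$ with one of the listed hyperplanes.
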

\begin{proof}

Suppose $H$ is a facet-defining hyperplane of $\Newt(G_{h,\lambda}(\x))$. Then there exists a $k$ such that $H \cap \Newt(s_\lambda^{(b_k)})\neq \emptyset$ and $H$ isolates $\Newt(s_\lambda^{(b_k)})$. 
Thus $H \cap \Newt(s_\lambda^{(b_k)})$ is a face of $\Newt(s_\lambda^{(b_k)})$  and by Rado's inequalities, $H$ is defined by $\sum_{i\in I} x_i = a$ for some $a >0$ and $I\subset [m].$
Recall
\begin{align*} \overline{H_+} &= \left\{\x \in \mathbb{R}^m \mid \hbox{$\sum_{i\in I} x_i\geq a$}\right\},\\
\overline{H_-} &= \left\{\x \in \mathbb{R}^m \mid \hbox{$\sum_{i\in I} x_i\leq a$}\right\}.
\end{align*}

Suppose $H$ is a facet-defining hyperplane of $\Newt(G_{h,\lambda}(\x))$ so that, in particular, 
$$m-1 
= \dim (H\cap \Newt(G_{h,\lambda}(\x)))
\geq \dim (H\cap \Newt(s_{\lambda^{(b_k)}}(\x)))$$
for $k=1,\ldots, m$.

Suppose $\dim (H\cap \Newt(s_{\lambda^{(b_k)}}(\x)))=m-1$ for some $b_k$.
If $\Newt(G_{h,\lambda}(\x)) \subseteq \overline{H_+}$, then $b_1=0$ and $H\cap \Newt(s_{\lambda^{(0)}}(\x)) = \Newt(s_{\lambda^{(0)}}(\x))$, since $\dim \Newt(s_{\lambda^{(0)}}(\x)) = m-1$.  
This case corresponds to the facet-defining inequality $\sum_{i=1}^m x_i \geq |\lambda|$.

Otherwise, if $\Newt(G_{h,\lambda}(\x)) \subseteq \overline{H_-}$, then $b_m=N$ and $$H\cap \Newt(s_{\lambda^{(N)}}(\x)) = \Newt(s_{\lambda^{(N)}}(\x)).$$
If the $S_m$-orbit of $\lambda^{(N)}$ is nontrivial, then $\dim \Newt(s_{\lambda^{(N)}}(\x)) = m-1$.
This case corresponds to the facet-defining inequality $\sum_{i=1}^m x_i \leq |\lambda|+N$.

Now suppose $\dim (H\cap \Newt(s_{\lambda^{(b_\ell)}}(\x)))<m-1$ for all $\ell=1,\ldots,m$ . 
It remains to show that there exists $1\leq k \leq m$ such that $\dim (H\cap \Newt(s_{\lambda^{(b_k)}}(\x)))=m-2$.

Since the symmetric group $S_m$ acts on $\Newt(G_{h,\lambda}(\x))$ (and hence its facet-defining hyperplanes) then without loss of generality, it suffices to assume that $H$ is defined by an equation of the form $\sum_{i=1}^r x_i = d$ for some $r \leq m$.

First assume that $\Newt(G_{h,\lambda}(\x)) \subseteq \overline{H_-}$.
Since $H$ is facet-defining, then it contains at least one vertex of $\Newt(G_{h,\lambda}(\x))$, say $\lambda^{(b_k)}$, where we choose $k$ to be the smallest index for which this is true.
By the same argument as in the proof of Lemma~\ref{lem.exclude2}, we may assume that $k\leq m-1$.
This implies $d = \sum_{i=1}^r \lambda_i^{(b_k)}$. Since $\lambda_i^{(b_k)} \leq \lambda_i^{(b_\ell)}$ for all $i$ and all $\ell\geq k$, then
$$d = \sum_{i=1}^r \lambda_i^{(b_k)} \leq  \sum_{i=1}^r \lambda_i^{(b_\ell)} \leq d,$$
where the second inequality is due to $\Newt(G_{h,\lambda}(\x)) \subseteq \overline{H_-}$.
So the vertices $\lambda^{(b_\ell)}$ also lie in $H$ for all $\ell\geq k$.

From this observation, we see that the vertices of $\Newt(G_{h,\lambda}(\x))$ that lie in $H$ consists of concatenating an $S_r$-permutation of $(\lambda_1^{(b_k)},\ldots, \lambda_r^{(b_k)})$ with an $S_{m-r}$-permutation of $(\lambda_{r+1}^{(b_\ell)},\ldots, \lambda_m^{(b_\ell)})$ for any $\ell\geq k$.
The convex hull of the permutations of $(\lambda_{r+1}^{(b_\ell)},\ldots, \lambda_m^{(b_\ell)})$ for $\ell=k,\ldots, m$ has dimension $m-r$ as $k<m$ and $\lambda_m^{(b_k)}=0\neq \lambda_m^{(b_m)}$.
Let $s$ be the dimension of the convex hull of the permutations of $(\lambda_1^{(b_k)},\ldots, \lambda_r^{(b_k)})$, so that
$$\dim ( H \cap \Newt(G_{h,\lambda}(\x)) ) = s+m-r.$$
But $H$ is a facet-defining hyperplane of $\Newt(G_{h,\lambda}(\x))$, so $s+m-r=m-1$ implies $s=r-1$.

Now, the dimension of $H \cap \Newt(s_{\lambda^{(b_k)}}(\x))$ is the sum of $s$ and the dimension of the convex hull of the permutations of $(\lambda_{r+1}^{(b_k)},\ldots, \lambda_m^{(b_k)})$, which is $m-r-1$ as $k<m$.  
Thus
$$\dim ( H \cap \Newt(s_{\lambda^{(b_k)}}(\x)) ) = (r-1) + (m-r-1) = m-2.$$
Thus $H$ is a facet-defining hyperplane of $\Newt(s_{\lambda^{(b_k)}}(\x))$.

A similar argument works in the case $\Newt(G_{h,\lambda}(\x)) \subseteq \overline{H_+}$. Therefore the result follows.

\end{proof}

We are now ready to classify the Newton polytopes of inflated symmetric Grothendieck polynomials that are reflexive.  First, we examine the simple case when $m=2$.

\begin{proposition}\label{prop.m2}
Let $m=2$, and assume that $\lambda=(\lambda_1,0)$.  The only Newton polytopes of inflated symmetric Grothendieck polynomials that are reflexive are $\Newt(G_{h,(3,0)}(\x))$ for $h\geq2$.
\end{proposition}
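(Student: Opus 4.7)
The plan is to use Corollary~\ref{cor.soneisreflexive} together with the explicit structure of the dominating sequence when $m=2$ to force $\lambda_1=3$, and then to finish by a short case analysis in $h$.

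First I would work out the sequence of dominating partitions. For $m=2$ and reduced $\lambda=(\lambda_1,0)$, the rule that one adds a box to the smallest row $r$ with $\lambda^{(k-1)}_r-\lambda_r<h(r-1)$ blocks $r=1$ (since $h(r-1)=0$) and permits $r=2$ as long as $\lambda^{(k-1)}_2<h$ and the resulting shape is still a valid partition. Thus $\lambda^{(k)}=(\lambda_1,k)$ for $k=0,\ldots,N$, with $N=\min(h,\lambda_1)$; in particular $\lambda^{(1)}=(\lambda_1,1)$. If $\Newt(G_{h,\lambda}(\x))$ is reflexive, Corollary~\ref{cor.soneisreflexive} says $\Newt(s_{\lambda^{(1)}}(\x))$ is reflexive, and for $m=2$ the only option offered by Theorem~\ref{thm:reflexive} is that $(\lambda_1,1)$ reduces by translation to $(2,0)$. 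This forces $\lambda_1=3$, so $\lambda=(3,0)$.

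Next I would eliminate the case $h=1$. There $N=1$, $\lambda^{(N)}=(3,1)$ has a nontrivial $S_2$-orbit, and Lemma~\ref{lem.frontandback} supplies the facet-defining inequality $x_1+x_2\le 4$. By Corollary~\ref{cor.uonone} any interior lattice point of $\Newt(G_{1,(3,0)}(\x))$ must lie on $\Newt(s_{(3,1)}(\x))$, whose only interior lattice point is $(2,2)$. Since $(2,2)$ satisfies $x_1+x_2=4$, it lies on the facet rather than in the interior, so $\Newt(G_{1,(3,0)}(\x))$ has no interior lattice point and fails to be reflexive.

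Finally I would verify reflexivity for $h\ge 2$. Proposition~\ref{proposition_vertices_of_sgp} together with $\lambda^{(k)}=(3,k)$ describes the polytope explicitly: for $h=2$ it is the quadrilateral with vertices $(3,0),(0,3),(3,2),(2,3)$, and for $h\ge 3$ it is the triangle with vertices $(3,0),(0,3),(3,3)$ (since $\lambda^{(3)}=(3,3)$ has trivial $S_2$-orbit, and increasing $h$ further does not change the shape). Theorem~\ref{thm.ineqsofG} then gives the facet-defining inequalities $x_1+x_2\ge 3$ and $x_i\le 3$ for $i=1,2$, with the additional inequality $x_1+x_2\le 5$ in the $h=2$ case. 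A direct check confirms that $(2,2)$ is the unique interior lattice point in each case and is lattice distance $1$ from every facet, so the polytope is reflexive. The only truly nontrivial step is the first: once $\lambda_1=3$ is forced, the rest is a concrete calculation using the vertex and facet descriptions already established.
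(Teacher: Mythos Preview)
Your argument is correct and follows the same overall arc as the paper: pin down $\lambda_1=3$, rule out $h=1$ by showing there is no interior lattice point, and then verify reflexivity for $h\ge 2$ from the explicit vertex and facet descriptions.

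The one place you genuinely diverge is in the first step. The paper combines Corollary~\ref{cor.uonone} (which gives $2u=|\lambda|+1=\lambda_1+1$) with Corollary~\ref{cor.lambda_and_u} (which gives $\lambda_1=u+1$) to solve directly for $\lambda_1=3$. You instead invoke Corollary~\ref{cor.soneisreflexive} and then appeal to the full classification in Theorem~\ref{thm:reflexive} for $m=2$ to force $(\lambda_1,1)$ to reduce to $(2,0)$. Both routes are short and valid; yours trades a two-line numerical computation for a lookup in the Schur classification, which is a perfectly reasonable exchange given that Theorem~\ref{thm:reflexive} is already available. One small wording point in your $h=1$ paragraph: Corollary~\ref{cor.uonone} is stated conditionally on reflexivity and speaks of \emph{the unique} interior lattice point $(u,u)$, not ``any'' interior lattice point; your contradiction argument is fine, but it would be cleaner to say that the putative interior point is $(2,2)$ because $2u=|\lambda|+1=4$, and then observe that $(2,2)$ lies on the facet $x_1+x_2=4$.
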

\begin{proof}
If $\Newt(G_{h,\lambda}(\x))$ is reflexive, then by Corollary~\ref{cor.uonone} its unique interior lattice point $\u = (u,u)$ lies in $\Newt(s_{\lambda^{(1)}}(\x))$ so $2u = |\lambda|+1$.
Since $\lambda$ is assumed to be reduced by translation, then $\lambda = (2u-1,0)$.  By Corollary~\ref{cor.lambda_and_u} we know $\lambda_1=u+1$, so
$2u-1=u+1$ implies $u=2$ and $\lambda=(3,0)$.
When $h=1$, $\lambda^{(N)}=\lambda^{(1)}= (3,1)$ and $\Newt(G_{1,\lambda}(\x))$ has no interior lattice points. 

When $h=2$, $\Newt(G_{2,\lambda}(\x)) = \conv \{(3,0), (3,2), (2,3), (0,3) \}$ with the unique interior lattice point $(2,2)$ so it is reflexive.

Finally when $h\geq3$, $\Newt(G_{2,\lambda}(\x)) = \conv \{(3,0), (3,3), (0,3) \}$ with the unique interior lattice point $(2,2)$ so it is also reflexive.
\end{proof}

\begin{proposition} \label{prop.classifyrefG}
Let $m\geq3$ and assume that $\lambda$ is reduced by translation so that $\lambda = (\lambda_1,\ldots, \lambda_{m-1},0)$. 
If $\mathrm{Newt}(G_{h,\lambda}(\x))$ is reflexive, then $\lambda$ is of one of the following forms:
$$\lambda = \begin{cases}
(m+1,\ldots, m+1,0) \vdash m^2-1,\\
(2,\ldots,2,1,0,\ldots,0) \vdash m-1, &\hbox{ if $m$ is even,}\\
(2,\ldots, 2,0,\ldots,0) \vdash m-1, &\hbox{ if $m$ is odd.}
\end{cases}$$
\end{proposition}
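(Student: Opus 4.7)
The plan is to apply the three preceding corollaries and then split on the value of the interior lattice point. Suppose $\Newt(G_{h,\lambda}(\x))$ is reflexive; by $S_m$-symmetry its unique interior lattice point has the form $\u = (u,\ldots,u)$, and Corollaries~\ref{cor.uonone} and~\ref{cor.lambda_and_u} give $mu = |\lambda|+1$ (forcing $u \in \mathbb{Z}_{\geq 1}$) and $\lambda_1 = u+1$. Moreover, by Corollary~\ref{cor.soneisreflexive}, $\Newt(s_{\lambda^{(1)}}(\x))$ is reflexive, so Theorem~\ref{thm:reflexive} restricts $\lambda^{(1)}$ to one of five explicit forms up to translation.

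The dichotomy is governed by whether the inequality $x_i \geq 0$ defines a facet of $\Newt(G_{h,\lambda}(\x))$. Taking $I = [m]\setminus\{i\}$ and $k = 1$ in Theorem~\ref{thm.ineqsofG}(d), the inequality $\sum_{j \neq i} x_j \geq \lambda_m = 0$ is facet-defining unless the corresponding inequality of $\Newt(s_\lambda(\x))$ fails to define a facet, which by Theorem~\ref{prop:facets}(c) happens exactly when $\lambda_1 = \cdots = \lambda_{m-1}$. In that case $\lambda = ((u+1)^{m-1},0)$, and $mu = (m-1)(u+1)+1$ solves to $u = m$, giving $\lambda = ((m+1)^{m-1},0) \vdash m^2-1$, the first form. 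Otherwise $x_i \geq 0$ is a facet and reflexivity forces the lattice distance $u$ to equal $1$, which with $\lambda_1 = u+1 = 2$ and $|\lambda| = m-1$ means $\lambda = (2^p, 1^q, 0^s)$ for some $p \geq 1$, $q \geq 0$, $s \geq 1$ with $p + q + s = m$ and $2p + q = m-1$.

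In the case $u = 1$, I would compute $\lambda^{(1)}$ from the greedy first-box rule: the first addable row is $r_1 = p+1$, so $\lambda^{(1)} = (2^{p+1}, 1^{q-1}, 0^s)$ when $q \geq 1$ and $\lambda^{(1)} = (2^p, 1, 0^{s-1})$ when $q = 0$. Since $r_1 \leq (m+1)/2 < m$ we have $\lambda^{(1)}_m = 0$, so Theorem~\ref{thm:reflexive} requires $\lambda^{(1)}$ to literally be one of the five reflexive forms. Forms (a) and (e) force $\lambda^{(1)}_1 = m$, incompatible with $\lambda^{(1)}_1 = \lambda_1 = 2$ for $m \geq 3$; form (b) produces only $\lambda = (2,0,0)$ at $m = 3$, which coincides with the third form at $m = 3$; form (c) requires $m$ even and pins $(p,q,s) = (m/2-1, 1, m/2)$, giving $\lambda = (2^{m/2-1}, 1, 0^{m/2})$, the second form; form (d) requires $m$ odd and admits two solutions, namely $(p,q,s) = ((m-1)/2, 0, (m+1)/2)$, giving $\lambda = (2^{(m-1)/2}, 0^{(m+1)/2})$, the third form, and an auxiliary candidate $\lambda^\star = (2^{(m-3)/2}, 1, 1, 0^{(m-1)/2})$ corresponding to $(p,q,s) = ((m-3)/2, 2, (m-1)/2)$, which exists only for $m \geq 5$.

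The main obstacle is ruling out $\lambda^\star$ for $m \geq 5$. I would choose $I = \{1, \ldots, (m-1)/2\}$: then $\sum_{j=1}^{|I|} \lambda^\star_j = 2 \cdot (m-3)/2 + 1 = m - 2$, so $\sum_{i \in I} x_i \leq m-2$ defines a facet of $\Newt(s_{\lambda^\star}(\x))$ by Theorem~\ref{prop:facets}(b) (neither the first $|I|$ nor the last $m - |I|$ entries of $\lambda^\star$ are constant). Since $|I| \geq 1 = k$ and $|I| = (m-1)/2 \neq m-1$ for $m \geq 5$, the hyperplane $\sum_{i \notin I} x_i = 1$ is facet-defining in $\Newt(G_{h,\lambda^\star}(\x))$ by Theorem~\ref{thm.facetsofG} (it is not of the excluded form $\mathcal{J}_j$). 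The lattice distance from $\u = (1,\ldots,1)$ to this facet is $(m+1)/2 - 1 = (m-1)/2 \geq 2$, contradicting reflexivity and completing the classification.
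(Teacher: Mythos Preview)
Your argument is correct and takes a genuinely different route from the paper's. The paper argues entirely from Corollary~\ref{cor.soneisreflexive}: it runs through each of the five reflexive shapes for $\lambda^{(1)}$ \emph{up to translation}, reconstructs the possible $\lambda$'s by deleting a box, and eliminates spurious candidates by checking them against the greedy rule defining $\lambda^{(1)}$. You instead front-load Corollary~\ref{cor.lambda_and_u} together with the facet $x_i\geq 0$ to force the dichotomy $u=m$ versus $u=1$; this collapses the translation ambiguity immediately (since you already know $\lambda^{(1)}_m=0$ in the $u=1$ branch) and leaves only a short parametric match against the five shapes. Your approach is tighter, and in one spot it is strictly more explicit than the paper: the candidate $\lambda^\star=(2^{(m-3)/2},1,1,0^{(m-1)/2})$ is \emph{not} eliminated by the greedy rule (adding a box to row $(m-1)/2$ is exactly what the rule prescribes), so the paper's appeal to ``arguments similar to those in the previous cases'' is vague there, whereas your lattice-distance computation with $I=\{1,\dots,(m-1)/2\}$ settles it cleanly.

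Two small write-up issues. First, in your dichotomy paragraph the displayed inequality should read $x_i\geq \lambda_m=0$, not $\sum_{j\neq i}x_j\geq \lambda_m=0$; with $I=[m]\setminus\{i\}$ the complement is $\{i\}$, and it is $x_i\geq 0$ whose lattice distance from $\u$ equals $u$. Second, the exclusion of the form $\mathcal{J}_j$ is governed by $|I|\neq 1$ (equivalently $|[m]\setminus I|\neq m-1$), not by $|I|\neq m-1$; your hypothesis $|I|=(m-1)/2\geq 2$ for $m\geq 5$ already guarantees this, so the conclusion stands.
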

\begin{proof}
By Corollary~\ref{cor.soneisreflexive}, if $\mathrm{Newt}(G_{h,\lambda}(\x))$ is reflexive then $\Newt(s_{\lambda^{(1)}}(\x))$ is reflexive, so we know from Theorem~\ref{thm:reflexive} that the partition ${\lambda^{(1)}}$ is a translate of one of
$$(m,0,\ldots,0), (2,1^{m-2},0), (2^\ell,0^\ell), (2^\ell,1,0^\ell)\vdash m, 
\hbox{ or } (m,\ldots, m,0)\vdash m(m-1).$$

First suppose $\lambda^{(1)}$ is a translate of $(m,0,\ldots,0)$ so that it is of the form $(m+c,c,\ldots,c)$ for some nonnegative integer $c$. 
Then $\lambda = (m+c,c,\ldots,c,c-1)$ or $(m+c-1,c\ldots,c)$. 
Since $\lambda$ is assumed to be reduced by translation, then $c=1$ and $\lambda = (m+1,1,\ldots,1,0)$ or $c=0$ and $\lambda = (m-1,0,\ldots,0)$.  
In the first case, this implies that the box that was added to $\lambda$ to obtain $\lambda^{(1)}$ is in the $m$-th row (where $m\geq3$), but by definition it should have been in the second row so this case is not possible.  
In the second case, this implies that the box that was added to $\lambda$ to obtain $\lambda^{(1)}$ is in the first row but this is also not possible by definition.

Next, suppose $m\geq4$ and $\lambda^{(1)}$ is a translate of $(2,1^{m-2},0)$ so that it is of the form $(2+c,(1+c)^{m-2},c)$ for some nonnegative integer $c$.
Then $\lambda = (2+c, (1+c)^{m-2}, c-1)$, $(2+c, (1+c)^{m-3},c,c)$, or $((1+c)^{m-1},c)$. 
Since $\lambda$ is assumed to be reduced by translation, then $c=1$ in the first case and $c=0$ in the second and third cases.   
All three of these cases are not possible for reasons analogous to the previous case.
When $m=3$, the only possible case is when $\lambda^{(1)} = (2,1,0)$ and $\lambda = (2,0,0)$.  This case is also covered in one of the cases below.

Now, suppose $\lambda^{(1)}$ is a translate of $(2^\ell,0^\ell)\vdash m=2\ell$ or $(2^\ell,1,0^\ell)\vdash m = 2\ell+1$ so that it is of the form $((2+c)^\ell, c^\ell)$ or $((2+c)^\ell,1+c,c^\ell)$.  
When $m$ is even, then $\lambda = ((2+c)^\ell, c^{\ell-1}, c-1)$ or $((2+c)^{\ell-1}, 1+c, c^\ell)$. 
Since $\lambda$ is assumed to be reduced by translation, then $c=1$ and $\lambda = (3^\ell,1^{\ell-1},0)$ in the first case, or $c=0$ and $\lambda=(2^{\ell-1},1,0^\ell)\vdash m-1$ in the second.
In the first case, this implies that the box that was added to $\lambda$ to obtain $\lambda^{(1)}$ is in the $m$-th row (where $m\geq4$), when it should have been added in the $(\frac{m}{2}+1)$-th row, so this case may be eliminated.
When $m$ is odd, then $\lambda=((2+c)^\ell,1+c,c^{\ell-1},c-1)$, $((2+c)^\ell,c^{\ell+1})$ or $((2+c)^{\ell-1},(1+c)^2,c^\ell)$.
Since $\lambda$ is assumed to be reduced by translation, then $c=1$ in the first case and $c=0$ in the second and third cases.  By arguments similar to those in the previous cases, the only possible case is the second, when $\lambda=(2^\ell,0^{\ell+1})\vdash m-1$.

Finally, suppose $\lambda^{(1)}$ is a translate of $(m,\ldots,m,0)$, so that it is of the form $(m+c,\ldots,m+c,c)$ for some nonnegative integer $c$.
Then $\lambda = (m+c,\ldots,m+c,c-1)$ or $(m+c,\ldots,{m+c-1},c)$.  Since $\lambda$ is assumed to be reduced by translation, then $c=1$ and $\lambda=(m+1,
\ldots,m+1,0)$ in the first case, or $c=0$ and $\lambda= (m,\ldots,m, m-1,0)$ in the second.
To rule out this latter possibility, note that if $\Newt(G_{h,\lambda}(\x))$ is reflexive, then by Corollary~\ref{cor.uonone} its unique interior lattice point is $\u=(u,\ldots,u)\vdash mu=|\lambda|+1= m^2-m$, so $u=m-1$.  By Theorem~\ref{thm.ineqsofG}, $x_i\geq0$ is a facet-defining inequality of $\Newt(G_{h,\lambda}(\x))$, and $\u$ is lattice distance $m-1\geq2$ from each of these hyperplanes, so $\Newt(G_{h,\lambda}(\x))$ cannot be reflexive.

The result now follows.
\end{proof}

We next determine the values of $h$ for which the partitions $\lambda$ in Proposition~\ref{prop.classifyrefG} give rise to reflexive Newton polytopes of inflated symmetric Grothendieck polynomials.

\begin{proposition}\label{prop.reflG1}
Let $\lambda=(m+1,\ldots,m+1,0)\vdash m^2-1$.  Then the Newton polytope $\Newt(G_{h,\lambda}(\x))$ is reflexive when $h=1$ and $m=3$, or $h\geq2$ and $m\geq3$.
\end{proposition}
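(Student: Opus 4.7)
The plan is to compute the sequence of dominating partitions, use Corollary~\ref{cor.uonone} to pin down the candidate interior lattice point $\u=(m,\ldots,m)$, and then apply Theorem~\ref{thm.ineqsofG} to verify that $\u$ is lattice distance $1$ from every facet of $\Newt(G_{h,\lambda}(\x))$.

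First I would describe $\lambda^{(0)},\lambda^{(1)},\ldots,\lambda^{(N)}$. Because $\lambda_1=\cdots=\lambda_{m-1}=m+1$, no box can ever be added to any of the first $m-1$ rows: row $1$ is blocked by $h(1-1)=0$, and rows $2,\ldots,m-1$ are blocked by the partition condition. Hence every box is added to row $m$, and the sequence terminates when $\lambda^{(k)}_m$ reaches $\min(h(m-1),\,m+1)$. When $h=1$ and $m=3$ we have $h(m-1)=2<m+1$, so $N=2$ and $\lambda^{(N)}=(4,4,2)$ has nontrivial $S_3$-orbit; when $h\geq 2$ and $m\geq 3$ one verifies $h(m-1)\geq m+1$, so $N=m+1$ and $\lambda^{(N)}=((m+1)^m)$ is fixed by $S_m$. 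Since $(a_1,\ldots,a_m)=(0,\ldots,0,N)$, we have $\lambda^{(b_k)}=\lambda$ for $k<m$ and $\lambda^{(b_m)}=\lambda^{(N)}$.

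By Corollary~\ref{cor.uonone}, any interior lattice point has the form $(u,\ldots,u)\in\Newt(s_{\lambda^{(1)}})$ where $\lambda^{(1)}=(m+1,\ldots,m+1,1)$, so $u=m$. To enumerate the facets via Theorem~\ref{thm.ineqsofG}, I observe that $\Newt(s_\lambda)$ has facets only of the form $x_i\leq m+1$ (by Theorem~\ref{prop:facets}, every $|I|\geq 2$ candidate collapses because $\lambda_1=\cdots=\lambda_{m-1}$); that $\Newt(s_{((m+1)^m)})$ is a single point with no facets; and that in the $h=1,m=3$ subcase, $\Newt(s_{(4,4,2)})$ is a triangle whose only facets are also $x_i\leq 4$. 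Consequently, Theorem~\ref{thm.ineqsofG} produces exactly: (a) the facet $\sum_i x_i\geq m^2-1$; (b) the facet $\sum_i x_i\leq 10$, only in the $h=1,m=3$ subcase; (c) the $m$ facets $x_i\leq m+1$; and no facets from (d), since the only available (d)-candidate is $\sum_{j\neq i} x_j\geq m^2-m-2$, which is precisely the hyperplane $\mathcal{J}_i$ ruled out by Lemma~\ref{lem.exclude1}.

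Finally, I compute the lattice distances from $\u$: the facet $\sum_i x_i\geq m^2-1$ is at distance $m\cdot m-(m^2-1)=1$; each facet $x_i\leq m+1$ is at distance $1$; and in the $h=1,m=3$ subcase the facet $\sum_i x_i\leq 10$ is at distance $10-9=1$. Since $\u$ lies in the relative interior (being a strictly positive convex combination of the centroid of the $S_m$-orbit of $\lambda$ and the point $\lambda^{(N)}$, or of the two nontrivial orbits in the $h=1,m=3$ subcase) and is lattice distance $1$ from every facet, the polytope is reflexive. The crux of the argument is the exclusion of the (d)-candidate $\sum_{j\neq i} x_j\geq m^2-m-2$: at distance $2$ from $\u$, it would immediately destroy reflexivity if it were actually a facet, so the entire proof hinges on invoking Lemma~\ref{lem.exclude1}.
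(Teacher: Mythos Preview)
Your proof is correct and follows essentially the same route as the paper: compute the dominating partitions, locate the candidate interior point $\u=(m,\ldots,m)$, enumerate the facets via Theorem~\ref{thm.ineqsofG} (together with Theorem~\ref{prop:facets} to see that the only facets of $\Newt(s_\lambda)$ and of $\Newt(s_{(4,4,2)})$ are the $x_i\le m+1$), and check that $\u$ is lattice distance $1$ from each. Your observation that the sole (d)-candidate is exactly the hyperplane $\mathcal{J}_i$ excluded by Lemma~\ref{lem.exclude1} is the same subtlety the paper handles, just made explicit.

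One small caveat on presentation: your invocation of Corollary~\ref{cor.uonone} to ``pin down'' $\u$ is, strictly speaking, circular, since that corollary assumes reflexivity. In the paper the point $(m,\ldots,m)$ is located instead by a direct argument (any interior lattice point $\mu$ satisfies $\mu_i<\lambda_1^{(k)}=m+1$ for all $i$, so $\sum_i\mu_i\le m^2$, forcing $\mu$ onto the slice $\Newt(s_{\lambda^{(1)}})$). Your use of the corollary is harmless because you then verify directly that $\u$ lies in the interior and is lattice distance $1$ from every facet, which is all reflexivity requires; but it would be cleaner either to drop the reference to Corollary~\ref{cor.uonone} and simply exhibit $\u$, or to replace it with the paper's short bounding argument.
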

\begin{proof} 
We first deduce where an interior lattice point can occur in $\Newt(G_{h,\lambda}(\x))$.
The sequence of dominating partitions for $G_{h,\lambda}(\x)$ is
$$\lambda^{(k)} = (m+1,\ldots, m+1,k)$$ for $k=0,\ldots, N=\min(h(m-1),m+1)$. 
If the partition $\mu=(\mu_1,\ldots, \mu_m)$ is an interior lattice point of $\Newt(G_{h,\lambda}(\x))$, then it lies in $\Newt(s_{\lambda^{(r)}}(\x))$ for some $r=1,\ldots, N-1$, and so $\mu \vdash |\lambda|+r = m^2-1+r$.
Moreover, $\mu$ is an interior lattice point only if 
$$\sum_{i=1}^\ell \mu_i < \sum_{i=1}^\ell \lambda_i^{(k)} = \ell(m+1)$$
for every $\ell=1,\ldots, m-1$ and $k=1,\ldots, N-1$.  
In particular when $\ell=1$, this implies $m\geq\mu_1\geq\cdots\geq\mu_m$, so it follows that $\sum_{i=1}^\ell \mu_i \leq \ell m$ for $\ell=1,\ldots, m$.
Therefore, 
$$m^2-1+r = |\lambda|+r = \sum_{i=1}^m \mu_i \leq m^2, $$
and we conclude that $r=1$ necessarily.
This means $\mu \vdash m^2$ so the only possible $\mu$ is $\mu = (m,\ldots, m)$, whose $S_m$-orbit is trivial.
Therefore when $\lambda = (m+1,\ldots, m+1,0)$, the polytope $\Newt(G_{h,\lambda}(\x))$ has the unique interior lattice point $\u=(m,\ldots, m)$.


If the $S_m$-orbit of $\lambda^{(N)}$ is not trivial, then by Lemma~\ref{lem.frontandback}, $x_1+\cdots+x_m=|\lambda|+N=m^2-1+N$ is a facet-defining hyperplane of $\Newt(G_{h,\lambda}(\x))$. 
Thus for $\Newt(G_{h,\lambda}(\x))$ to be reflexive, we require $N=2$, as the interior lattice point $\u=(m,\ldots,m)$ must be lattice distance one from this hyperplane.

In other words, $N=\min(h(m-1),m+1)=2$.  
Since we assumed $m\geq3$, then $h=1$ and $m=3$ is the only possibility. 
In this case, $\lambda = (4,4,0)$ and by Theorem~\ref{thm.ineqsofG} the facet-defining hyperplanes of this Newton polytope are 
$$x_i=4 \hbox{ for } i=1,2,3, \quad x_1+x_2+x_3=8, \quad \hbox{ and }\quad x_1+x_2+x_3=10.$$  
The unique interior lattice point $\u=(3,3,3)$ is lattice distance one from each of these facet-defining hyperplanes, so $\Newt(G_{1,(4,4,0)}(\x))$ is reflexive. 

Otherwise, if the $S_m$-orbit of $\lambda^{(N)}$ is trivial, then we must have $\lambda^{(N)}=(m+1,\ldots, m+1)$.  This implies $h(m-1)\geq m+1$, or equivalently, $h\geq 1+\frac{2}{m-1}$.  As $m\geq3$, then $h\geq2$.

In this case, we have $\lambda^{(b_1)} = \cdots = \lambda^{(b_{m-1})} = ({m+1},\ldots, m+1,0)$ and $\lambda^{(b_m)}=(m+1,\ldots, m+1)$. 
By Theorem~\ref{thm.ineqsofG}, the facet-defining hyperplanes of $\Newt(G_{h,\lambda}(\x))$ are
$$x_1+\cdots+x_m=m^2-1,\quad \hbox{ and }\quad x_i = m+1  \hbox{ for } i=1,\ldots, m,$$
and we see that this Newton polytope is an $m$-simplex.
The unique interior lattice point $\u = (m,\ldots, m)$ is lattice distance one from each of these hyperplanes so $\Newt(G_{h,\lambda}(\x))$ is reflexive.
\end{proof}

\begin{proposition}\label{prop.reflG2}
Let $m\geq3$ and let
$$\lambda = \begin{cases}
(2^\ell, 0^{\ell+1})\vdash m-1, &\hbox{ if $m=2\ell+1$ is odd,}\\
(2^{\ell-1},1,0^\ell)\vdash m-1, &\hbox{ if $m=2\ell$ is even.}
\end{cases}$$
Then $\Newt(G_{h,\lambda}(\x))$ is reflexive for $h\geq1$.
\end{proposition}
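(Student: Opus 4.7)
The plan is to identify all facets of $\Newt(G_{h,\lambda}(\x))$ via Theorem~\ref{thm.ineqsofG} and then to verify the lattice-distance criterion for reflexivity at the lattice point $\u=(1,\ldots,1)$.

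First I would compute the extreme dominating partitions $\lambda^{(b_1)},\ldots,\lambda^{(b_m)}$ from Definition~\ref{defn.partition_sequence}. Set $\ell_0=\ell$ in the odd case $m=2\ell+1$ and $\ell_0=\ell-1$ in the even case $m=2\ell$. A row-by-row analysis of the greedy box-adding rule shows that for every $h\geq 1$ with $(m,h)\neq(3,1)$ the per-row capacities $h(r-1)$ are large enough to fill each row of $\lambda$ up to value $2$, so $\lambda^{(N)}=(2,\ldots,2)$ has trivial $S_m$-orbit and
\[
\lambda^{(b_k)} = \begin{cases} \lambda, & 1\leq k\leq \ell_0,\\ (2^k,0^{m-k}), & \ell_0<k\leq m.\end{cases}
\]
The remaining case $(m,h)=(3,1)$ with $\lambda=(2,0,0)$ yields $\lambda^{(N)}=(2,1,1)$ and extreme partitions $(2,0,0),(2,1,0),(2,1,1)$, which I would dispatch by a short direct computation.

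Next I would enumerate the facets using Theorem~\ref{thm.ineqsofG}. Applying Theorem~\ref{prop:facets} to each $\lambda^{(b_k)}$ shows that $\Newt(s_{\lambda^{(b_k)}}(\x))$ has only the facets coming from $|I|=1$ (producing $x_i\leq 2$, valid when $k\geq 2$) and from $|I|=m-1$ (producing $x_j\geq 0$, valid when $k\leq m-2$), since all intermediate $|I|$-facets are killed by a constant leading or trailing block of $(2^k,0^{m-k})$ or of $\lambda$. Together with the global bounds of Theorem~\ref{thm.ineqsofG}(a)--(b), this gives exactly the facet-defining inequalities
\[
\textstyle\sum_{i=1}^{m} x_i \geq m-1,\qquad x_i\leq 2,\qquad x_j\geq 0 \qquad (i,j\in[m]),
\]
with additional facets $\sum_i x_i\leq 4$ and $x_i+x_j\leq 3$ arising in the exceptional case $(m,h)=(3,1)$.

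Finally, $\u=(1,\ldots,1)$ satisfies each of these inequalities with slack exactly $1$, so it lies in the interior at lattice distance $1$ from every facet; its uniqueness as the sole interior lattice point is immediate, since any interior lattice point must satisfy $0<x_i<2$, forcing every coordinate to equal $1$. The main obstacle is the first step: the sequential box-adding procedure of Definition~\ref{defn.partition_sequence} requires careful verification that the partition constraint and per-row capacity $h(r-1)$ interact as claimed, and the exceptional case $(m,h)=(3,1)$ must be handled separately.
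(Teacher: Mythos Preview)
Your proposal is correct and follows essentially the same approach as the paper: both arguments use Theorem~\ref{thm.ineqsofG} together with Theorem~\ref{prop:facets} to pin down the facet list $\sum_i x_i\geq m-1$, $x_i\leq 2$, $x_i\geq 0$ (with the extra facets in the exceptional case $(m,h)=(3,1)$), and then verify that $\u=(1,\ldots,1)$ is the unique interior lattice point at lattice distance one from every facet. The only cosmetic differences are that you compute the extreme dominating partitions $\lambda^{(b_k)}$ explicitly before invoking Theorem~\ref{thm.ineqsofG} (the paper simply asserts the facet list), and you derive uniqueness of $\u$ from the strict facet inequalities $0<x_i<2$, whereas the paper argues uniqueness first via the layered description $\mu_1<\lambda_1^{(k)}=2$ before computing the facets.
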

\begin{proof}
We first deduce where an interior lattice point can occur in $\Newt(G_{h,\lambda}(\x))$. 
For all $h\geq1$ and $m\geq3$, if $\lambda^{(0)}, \ldots, \lambda^{(N)}$ is the sequence of dominating partitions for $G_{h,\lambda}(\x)$, then $N=m+1$. 
In particular, $\lambda_1^{(k)}=2$ for all $k=0,\ldots, N$.  
If the partition $\mu=(\mu_1,\ldots,\mu_m)$ is an interior lattice point of $\Newt(G_{h,\lambda}(\x))$ that lies in $\Newt(s_{\lambda^{(r)}}(\x))$ for some $r=1,\ldots, N-1$, then $\mu \vdash |\lambda|+r = m-1+r$.  As $\mu$ is an interior lattice point, it must satisfy $\mu_1 < \lambda_1^{(k)}=2$, so we have $1\geq \mu_1 \geq \cdots \geq \mu_m$ and
$$m-1+r = |\lambda|+r = \mu_1+\cdots+\mu_m \leq m.$$
This implies $r=1$, and the only possible $\mu$ is $\mu=(1,\ldots,1)
\vdash m$, whose $S_m$-orbit is trivial. The point $\mu=(1,\ldots,1)$ satisfies $\sum_{i=1}^\ell\mu_i < \sum_{i=1}^\ell \lambda_i^{(k)}$ for all $k=1,\ldots, N-1$; thus $\Newt(G_{h,\lambda}(\x))$ has the unique interior lattice point $\u=(1,\ldots,1)$.

First consider the case $m=3$, $h=1$, and $\lambda=(2,0,0)$. 
The dominating sequence of partitions is $(2,0,0)$, $(2,1,0)$ and $(2,1,1)$.  
The facet-defining hyperplanes of $\Newt(G_{h,\lambda}(x))$  are 
$$x_i=0,2 \hbox{ for } i=1,2,3,\quad
 x_i+x_j = 3 \hbox{ for } i\neq j \in [3],\quad
 \hbox{ and }\quad x_1+x_2+x_3=2,4.$$

Evidently, $\u$ is lattice distance one from every facet-defining hyperplane of the Newton polytope, so $\Newt(G_{h,\lambda}(\x))$ is reflexive.  

Next consider the cases $m=3$ and $h\geq2$, or $m\geq4$ and $h\geq1$. 
In all cases, we have $N= m+1$, with $\lambda^{(N)}= (2^m)$, and the Newton polytope is the same for all $h$ in the given range. 
The vertices of $\Newt(G_{h,\lambda}(\x))$ are the $S_m$-orbits of $\lambda^{(0)}, \lambda^{(2)}, \ldots, \lambda^{(N)}$ if $m$ is odd, and are the $S_m$-orbits of $\lambda^{(0)}, \lambda^{(1)}, \lambda^{(3)}, \ldots, \lambda^{(N)}$ if $m$ is even.

By Theorem~\ref{thm.ineqsofG}, the facet-defining hyperplanes of $\Newt(G_{h,\lambda}(\x))$ are
$$x_i=0,2 \hbox{ for } i=1,\ldots,m,\quad \hbox{ and }\quad x_1+\cdots+x_m=m-1,$$
and we see that $\Newt(G_{h,\lambda}(\x))$ is the truncation of the $m$-cube $[0,2]^m$ by the hyperplane $x_1+\cdots+x_m = m-1$.
The unique interior lattice point $\u=(1,\ldots,1)$ is lattice distance one from each of these hyperplanes, and we conclude that $\Newt(G_{h,\lambda}(\x))$ is reflexive. 
\end{proof}

\begin{theorem}
The Newton polytope $\Newt(G_{h,\lambda}(\x))$ is reflexive if and only if $h$  and $\lambda=(\lambda_1,\ldots, \lambda_m)$ are one of the following cases:
$$\begin{array}{lll}
h\geq1, 
	& \lambda=(2,\ldots,2,0,\ldots,0)\vdash m-1
	& \hbox{ for odd $m\geq3$},\\
h\geq1, 
	& \lambda=(2,\ldots,2,1,0,\ldots,0)\vdash m-1
	& \hbox{ for even $m\geq4$},\\
h\geq2, 
	& \lambda=(m+1,\ldots,m+1,0)\vdash m^2-1
	& \hbox{ for } m\geq2,\\
h=1, 
	& \lambda=(4,4,0).
\end{array}
$$
\end{theorem}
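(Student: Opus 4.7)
The plan is to assemble the four preceding propositions into the theorem. First I would note that throughout Section 3.2 the paper assumes $\lambda$ is reduced by translation, i.e.\ $\lambda_m = 0$, and reducing by translation preserves reflexivity because it corresponds to a lattice translation of the Newton polytope (this is the Grothendieck analogue of Lemma~\ref{prop:SubtractFullColumns}, and follows because translating $\lambda$ uniformly shifts every $\lambda^{(k)}$ by the same vector). Under this assumption, the classification splits naturally into the cases $m=2$ and $m\geq 3$.

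For $m=2$, Proposition~\ref{prop.m2} gives that $\Newt(G_{h,\lambda}(\x))$ is reflexive if and only if $\lambda=(3,0)$ and $h\geq 2$. Since $(3,0)$ coincides with $(m+1,\ldots,m+1,0)\vdash m^2-1$ when $m=2$, this case is absorbed into the third bullet. For $m\geq 3$, Proposition~\ref{prop.classifyrefG} narrows the candidate partitions to exactly three families: (i) $(m+1,\ldots,m+1,0)\vdash m^2-1$, (ii) $(2,\ldots,2,1,0,\ldots,0)\vdash m-1$ when $m$ is even, and (iii) $(2,\ldots,2,0,\ldots,0)\vdash m-1$ when $m$ is odd. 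This handles the ``only if'' direction.

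For the ``if'' direction, I would feed each candidate into the appropriate sufficiency result. Proposition~\ref{prop.reflG1} shows that family (i) gives a reflexive Newton polytope precisely when $h\geq 2$ (any $m\geq 3$) or when $h=1$ and $m=3$; the former contributes to the third bullet (together with the $m=2$ case), while the latter is exactly $\lambda=(4,4,0)$, the fourth bullet. Proposition~\ref{prop.reflG2} shows that families (ii) and (iii) yield reflexive Newton polytopes for all $h\geq 1$, giving the second and first bullets, respectively.

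There is no substantive new work in this theorem — all the nontrivial content lives in the preceding propositions. The only task is the bookkeeping of checking that the union of cases from Propositions~\ref{prop.m2}, \ref{prop.reflG1}, and \ref{prop.reflG2} matches the four bullets exactly, and that Proposition~\ref{prop.classifyrefG} together with Proposition~\ref{prop.m2} shows no other $\lambda$ can arise. The mild care point is simply to verify that the $m=2$ case of Proposition~\ref{prop.m2} is the correct specialization of the third bullet, and that the $h=1,\ m=3$ exception in Proposition~\ref{prop.reflG1} is precisely the fourth bullet; beyond this, the proof is a one-sentence citation of each of the four prior propositions.
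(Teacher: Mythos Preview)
Your proposal is correct and follows essentially the same assembly-of-propositions approach as the paper. In fact, your write-up is more careful than the paper's one-line proof: the paper cites only Propositions~\ref{prop.m2}, \ref{prop.reflG1}, and \ref{prop.reflG2}, whereas you correctly include Proposition~\ref{prop.classifyrefG} (which is genuinely needed for the ``only if'' direction when $m\geq 3$) and spell out the translation-reduction justification and the bookkeeping that matches the $m=2$ case and the $h=1,\ m=3$ exception to the appropriate bullets.
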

\begin{proof}
The result follows from combining Propositions~\ref{prop.m2}, \ref{prop.reflG1} and \ref{prop.reflG2}.
\end{proof}

Tables of $h^\ast$-vectors for reflexive polytopes in the family $\Newt(G_{h,\lambda}(\x))$ can be found in the Appendix.


\section{$h^\ast$-polynomials}

Thus far we have shown that the Newton polytopes arising from Schur polynomials have the integer decomposition property (Proposition~\ref{thm:Schur_IDP}). In addition, we have classified the polytopes that are reflexive (Theorem~\ref{thm:reflexive}) and Gorenstein (Corollary~\ref{cor:Schur_gorenstein}). Following the motivation of Conjecture~\ref{conj-unimodal}, we study the $h^\ast$-polynomials of these polytopes.

In this section, we provide closed-form expressions for the $h^\ast$-polynomials of the four families of reflexive Newton polytopes of Schur polynomials from Theorem~\ref{thm:reflexive}. 
We also prove that all of these $h^\ast$-polynomials are unimodal. Figure \ref{fig.Reflexive-Nonreflexive-h*} depicts examples of reflexive and nonreflexive $\Newt(s_\lambda(\x))$ with their corresponding $h^{\ast}$-vectors. The $h^\ast$-vectors of reflexive $\Newt(s_\lambda(\x))$ for several partitions are provided in Table~\ref{tab:h*-vecs} in the Appendix.

\begin{figure}[ht!]
\begin{center}
\includegraphics[height=5.8cm]{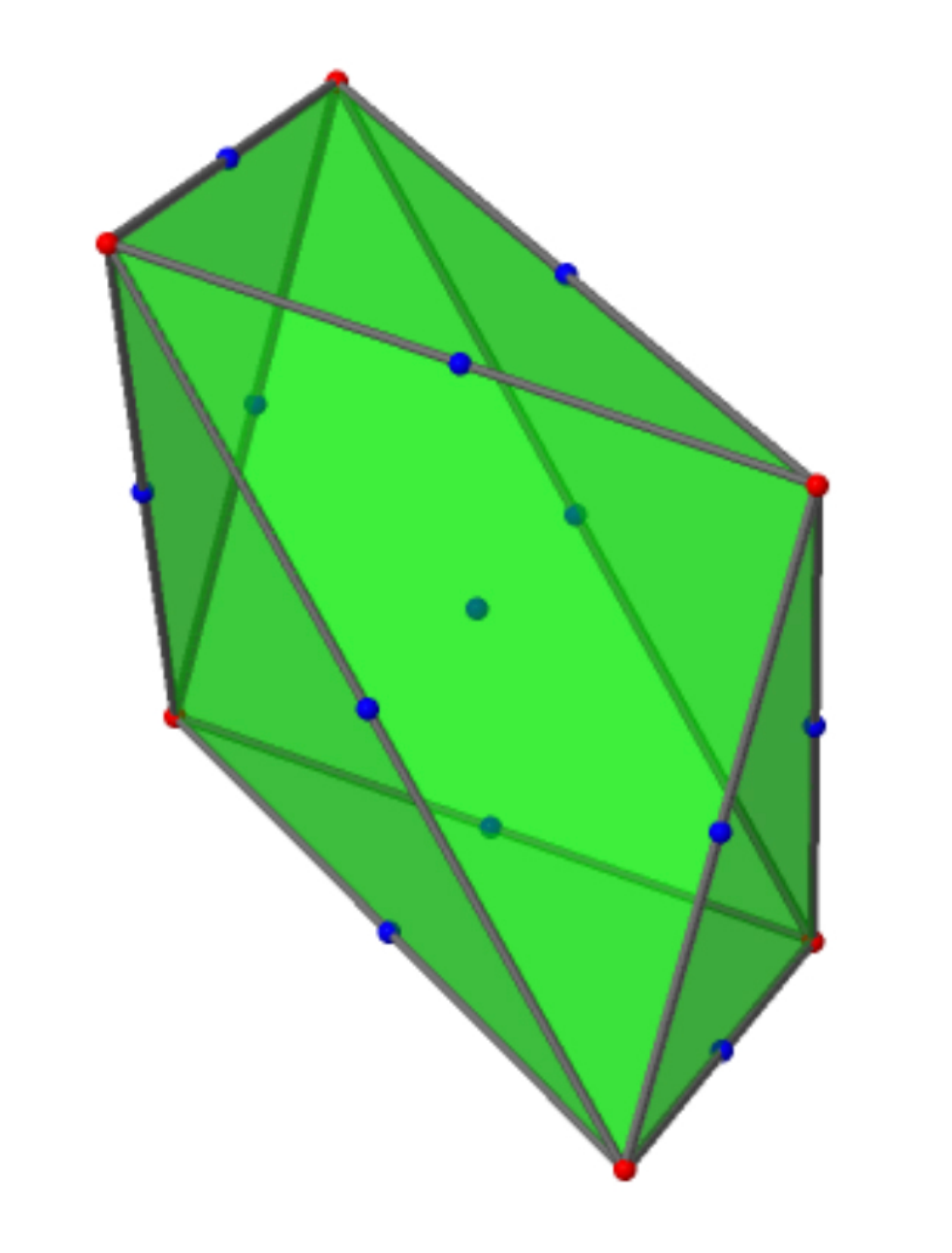} 
\hspace{1cm}
\includegraphics[height=6cm]{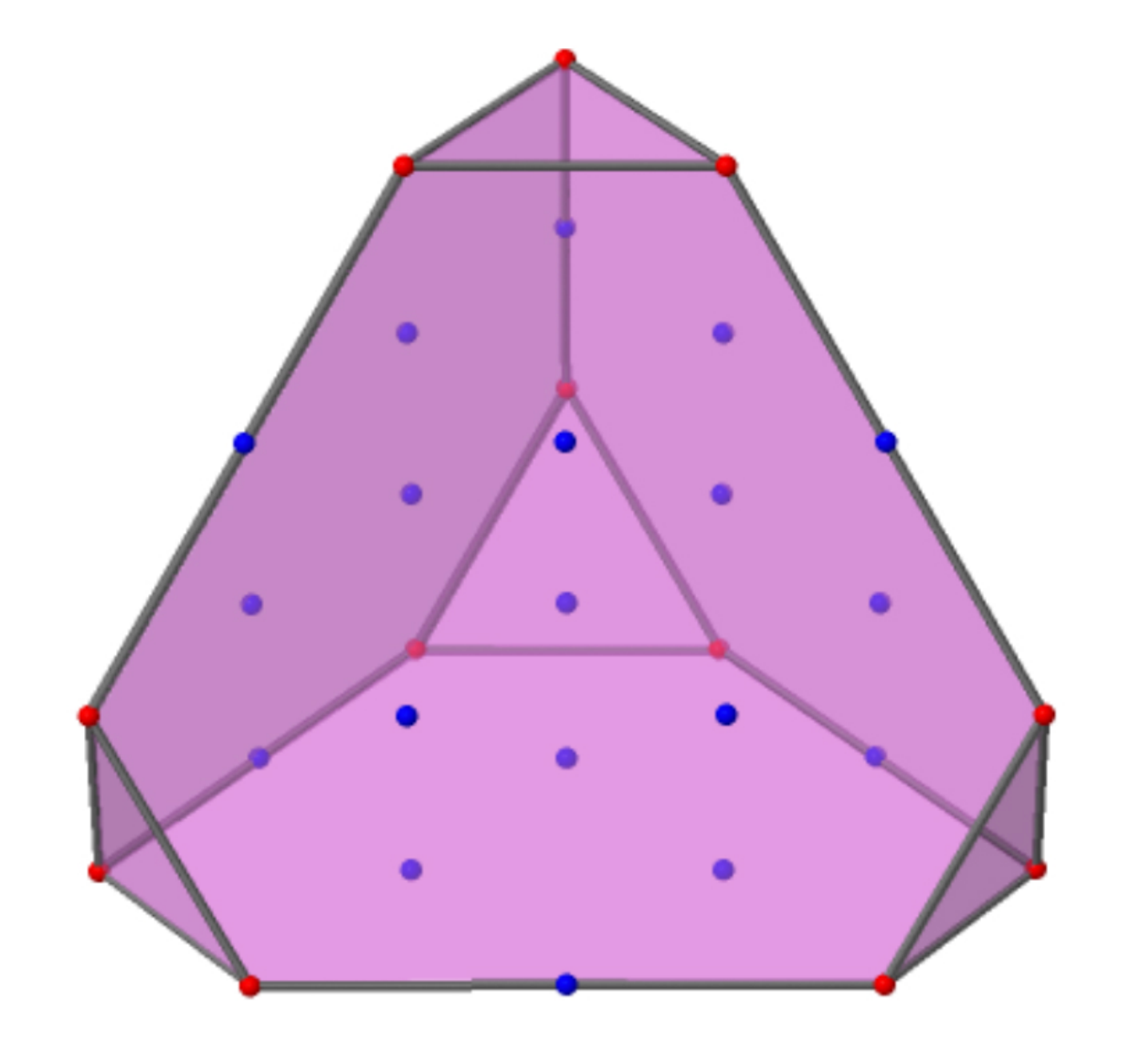} 
\caption{The reflexive polytope $\Newt(s_{(2,2,0,0)}(\x))$ has $h^{\ast}$-vector $(1,15,15,1)$ and is shown on the left. The polytope $\Newt(s_{(3,1,0,0)}(\x))$, which has $h^{\ast}$-vector $(1,27,31,1)$, is shown on the right and is not reflexive.}
\label{fig.Reflexive-Nonreflexive-h*}
\end{center}
\end{figure}

\begin{proposition}\label{prop:(n)_h*}
Let $\lambda= (n)$.
The $h^\ast$-polynomial of the Newton polytope $\Newt(s_\lambda(\x))=\mathcal{P}^n_\lambda$ is of degree $n-1$, and has coefficients
$$ h^\ast_j = \sum_{i=0}^j (-1)^i {n\choose i}{(j-i+1)n-1\choose n-1}.$$
\end{proposition}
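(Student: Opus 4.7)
The plan is to compute the Ehrhart polynomial of $\mathcal{P}^n_{(n)}$ in closed form, then read off the $h^\ast$-polynomial by multiplying the Ehrhart series by $(1-z)^n$ and extracting coefficients. The key observation is that $\mathcal{P}^n_{(n)}$ is the $(n-1)$-dimensional polytope obtained as the convex hull of the $S_n$-orbit of $(n,0,\ldots,0)\in \mathbb{R}^n$, so its $t$-th dilate is $t\mathcal{P}^n_{(n)}=\Newt(s_{(tn)}(\x))$, as noted in the proof of Proposition~\ref{thm:Schur_IDP}.

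Next, I would use the SNP property of Schur polynomials together with the identity $s_{(tn)}(x_1,\ldots,x_n)=h_{tn}(x_1,\ldots,x_n)$. Because Schur polynomials have saturated Newton polytope, lattice points of $t\mathcal{P}^n_{(n)}$ correspond exactly to distinct monomials of $s_{(tn)}$; and the monomials of the complete homogeneous symmetric polynomial $h_{tn}$ are in bijection with weak compositions of $tn$ into $n$ parts. By stars and bars, this gives
\[
\ehr_{\mathcal{P}^n_{(n)}}(t)=\binom{tn+n-1}{n-1}.
\]

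Finally, since $\dim \mathcal{P}^n_{(n)} = n-1$, the Ehrhart series takes the form
\[
\Ehr_{\mathcal{P}^n_{(n)}}(z)=\sum_{t\geq 0}\binom{tn+n-1}{n-1}z^t=\frac{h^\ast(z)}{(1-z)^n},
\]
so $h^\ast(z)=(1-z)^n\,\Ehr_{\mathcal{P}^n_{(n)}}(z)$. Expanding $(1-z)^n=\sum_{i=0}^n(-1)^i\binom{n}{i}z^i$ and extracting the coefficient of $z^j$ immediately yields
\[
h^\ast_j=\sum_{i=0}^{j}(-1)^i\binom{n}{i}\binom{(j-i)n+n-1}{n-1}=\sum_{i=0}^{j}(-1)^i\binom{n}{i}\binom{(j-i+1)n-1}{n-1},
\]
exactly as claimed. (Terms with $i>n$ vanish, so truncating the upper index at $j$ is harmless.) The fact that the polynomial has degree precisely $n-1$ follows from the reflexivity established in Theorem~\ref{thm:reflexive} (giving $h^\ast_{n-1}=h^\ast_0=1$), or can be seen directly from the leading behavior of $\binom{tn+n-1}{n-1}$.

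There is no real obstacle here: the entire argument reduces to recognizing the Ehrhart polynomial as a single binomial coefficient via SNP and then performing a routine power-series manipulation. The only point requiring a line of care is justifying the lattice-point count, which is immediate from the explicit monomial expansion of $h_{tn}$.
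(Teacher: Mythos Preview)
Your proof is correct and follows essentially the same route as the paper: both compute $\ehr_{\mathcal{P}^n_{(n)}}(t)=\binom{tn+n-1}{n-1}$ by identifying the lattice points of the dilate with weak compositions of $tn$ into $n$ parts, then multiply the Ehrhart series by $(1-z)^n$ and extract coefficients. The only cosmetic difference is that the paper reads off the lattice points directly from the vertex description of $t\mathcal{P}$, whereas you route the count through SNP and the identity $s_{(tn)}=h_{tn}$; both arrive at the same enumeration.
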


\begin{proof}
Let $\mathcal{P} = \mathcal{P}^n_\lambda$.
We first calculate the Ehrhart polynomial $\ehr_\mathcal{P}(k)$.
The polytope $\mathcal{P}$ has vertices $ne_i$ for $1\le i\le n$ and the polytope
$k\mathcal{P}$ has vertices $kne_i$ for $1\le i\le n$, where $e_i$ is the $i^{th}$ standard basis vector.
The lattice points in $k\mathcal{P}$ are all the points 
$(v_1,v_2,\ldots, v_n)$ with all $v_i$ nonnegative integers and 
$\sum_i v_i = kn$.  These are in natural bijection with all 
weak compositions of $kn$ into $n$ parts.
The number of these is well known, so we get
$$\ehr_\mathcal{P}(k) = {kn+n-1 \choose n-1}.$$
Thus the Ehrhart series  of $\mathcal{P}$ is 
$$\sum_{k=0}^\infty \ehr_\mathcal{P}(k) x^k = \sum_{k=0}^\infty {kn+n-1 \choose n-1}x^k.$$
To get the $h^\ast$-polynomial, we multiply the Ehrhart series by $(1-x)^n$. 

\begin{eqnarray*}
(1-x)^n \sum_{k=0}^\infty \ehr_\mathcal{P}(k) x^k &=& 
\left(\sum_{i=0}^n (-1)^i{n\choose i}x^i\right)
\left(\sum_{k=0}^\infty {kn+n-1 \choose n-1}x^k\right)\\
&=&\sum_{j=0}^{n-1}
\left(\sum_{i=0}^j (-1)^i {n\choose i}{(j-i)n+n-1\choose n-1}\right)x^j.
\end{eqnarray*}

So for $0\le j\le n-1$, 
$$h^\ast_j = \sum_{i=0}^j (-1)^i {n\choose i}{(j-i+1)n-1\choose n-1}.$$

\end{proof}

\begin{remark}
The coefficients in Proposition~\ref{prop:(n)_h*} are precisely those given by OEIS Sequence A108267 \cite{oeis}.
\end{remark}

\begin{proposition}\label{prop:(2,1,...,1)_h*}
Let $\lambda =  (2,1,\ldots,1)\vdash n$ and $\mathcal{P}^n_\lambda$ be the corresponding Newton polytope. Its $h^\ast$ coefficients are given by the following formula.
\begin{eqnarray*}
h^\ast_j &=& {n-1\choose j}^2 
\end{eqnarray*}
\end{proposition}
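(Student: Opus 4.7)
The plan is to identify the polytope with a translate of the type $A_{n-1}$ root polytope and then directly compute its Ehrhart series via a lattice-point parametrization. First I would translate $\mathcal{P}^n_\lambda$ by $-(1,\ldots,1)$ to obtain $Q = \conv\{e_i - e_j : 1 \leq i \neq j \leq n\}$; since the translation is by an integer vector, this preserves the $h^\ast$-polynomial, so it suffices to compute $h^\ast(Q)$.

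Next I would characterize the lattice points of the $k$-th dilate: $v \in kQ \cap \mathbb{Z}^n$ if and only if $\sum_i v_i = 0$ and $v_+ := \sum_{v_i > 0} v_i \leq k$. For the forward direction, writing $v = \sum_{i \neq j} \beta_{ij}(e_i - e_j)$ with $\beta_{ij} \geq 0$ and $\sum \beta_{ij} = k$ gives $v_+ = \sum_{i \in I,\, j \notin I}(\beta_{ij} - \beta_{ji}) \leq k$, where $I = \{i : v_i > 0\}$. For the reverse, route $v_+$ units of flow directly from positive coordinates to negative coordinates and absorb the remaining weight $k - v_+$ using length-three cycles, which contribute zero net flow.

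Summing a geometric series, the Ehrhart series is
\[
\Ehr_Q(z) = \frac{G(z)}{1-z}, \qquad G(z) := \sum_{v \in \mathbb{Z}^n,\ \sum_i v_i = 0} z^{v_+}.
\]
For each pair of nonnegative integer vectors $(a,b)$ with $|a| = |b|$, setting $c_i := \min(a_i, b_i)$ and $v^+ := a - c$, $v^- := b - c$ gives a bijection onto triples consisting of a disjoint-support pair $(v^+, v^-)$ together with an arbitrary $c \in \mathbb{Z}_{\geq 0}^n$, under which $|a| = |v^+| + |c|$. This decomposition factorizes $\sum_{j \geq 0}\binom{n+j-1}{n-1}^2 z^j = G(z) \cdot \frac{1}{(1-z)^n}$, since the left-hand side counts pairs of weak compositions. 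Hence $G(z) = (1-z)^n \sum_{j \geq 0} \binom{n+j-1}{n-1}^2 z^j$ and therefore $h^\ast(z) = (1-z)^{2n-1} \sum_{j \geq 0}\binom{n+j-1}{n-1}^2 z^j$.

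The remaining step, and the main obstacle, is to verify the identity
\[
(1-z)^{2n-1} \sum_{j \geq 0} \binom{n+j-1}{n-1}^2 z^j \,=\, \sum_{j=0}^{n-1} \binom{n-1}{j}^2 z^j.
\]
This is an instance of Euler's classical hypergeometric transformation ${}_2F_1(n,n;1;z) = (1-z)^{1-2n}\,{}_2F_1(1-n,1-n;1;z)$; alternatively, one can expand both sides and verify the binomial identity
\[
\binom{m+n-1}{n-1}^2 \,=\, \sum_{j=0}^{\min(m,n-1)} \binom{n-1}{j}^2 \binom{m-j+2n-2}{2n-2}
\]
by induction on $n$ or standard manipulations (e.g., via the Vandermonde--Chu convolution).
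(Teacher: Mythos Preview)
Your argument is correct and gives a self-contained derivation. The paper, by contrast, proceeds exactly as you do through the translation step---identifying $Q = \conv\{e_i - e_j : i,j \in [n]\}$ as the type-$A_{n-1}$ root polytope---but then simply invokes \cite[Theorem~2]{Ardila-etal} (Ardila, Beck, Ho\c{s}ten, Pfeifle, Seashore) for the formula $h^\ast_j = \binom{n-1}{j}^2$. Your lattice-point characterization of $kQ$, the composition-splitting bijection yielding $G(z) = (1-z)^n \sum_{j\ge 0} \binom{n+j-1}{n-1}^2 z^j$, and the appeal to Euler's transformation together amount to an independent proof of that cited result; this is more work than the paper's one-line citation, but it makes the argument self-contained and shows explicitly where the squared binomials arise. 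One small repair: in the reverse inclusion of the lattice-point description, use a two-cycle $(e_i - e_j) + (e_j - e_i)$ with real weight $\tfrac{1}{2}(k - v_+)$ on each edge rather than a three-cycle, so that the construction also covers the case $n = 2$.
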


\begin{proof}
Let $Q$ be the translation of $\mathcal{P}^n_\lambda$ so that the origin is its unique interior lattice point. Then $Q$ is the convex hull of all vectors $e_i - e_j$ for all $i,j \in [n]$. Thus $Q$ is precisely the polytope $\mathcal{P}_{A_{n-1}}$ as stated in \cite[Theorem 2]{Ardila-etal}. This theorem shows that $h^\ast_j = {n-1\choose j}^2.$ 
\end{proof}

\bigskip

\begin{proposition}\label{prop:(2,...,2)_h*}
Let $\lambda = (2,\ldots, 2)\vdash n$ ($n$ even) or $\lambda = (2,\ldots, 2,1)\vdash n$ ($n$ odd) and $\mathcal{P}^n_\lambda$ be the corresponding Newton polytope. The $h^\ast$ coefficients are given by the following formula. 
\begin{eqnarray*}
h^\ast_j &=& \sum_{k=0}^j (-1)^{j-k} {n\choose j-k} a_{n,k}, \text{ where } \\
a_{n,k} &=& \sum_{i=0}^n (-1)^i {n\choose i}
            {nk+n-1-i(2k+1) \choose n-1}. 
\end{eqnarray*}
\end{proposition}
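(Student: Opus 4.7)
The plan is to parallel the proof of Proposition~\ref{prop:(n)_h*}: identify the lattice points of the dilate $t\mathcal{P}^n_\lambda$ with a combinatorial object whose count can be written in closed form, and then multiply the Ehrhart series by $(1-x)^n$ to read off the $h^\ast$-polynomial.

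First I would show that in both cases (even $n$ and odd $n$) the polytope admits the uniform description
\[
\mathcal{P}^n_\lambda=\{\x\in\R^n \mid 0\leq x_i\leq 2 \text{ for all } i,\ \textstyle\sum_{i=1}^n x_i = n\}.
\]
For even $n$, Rado's inequalities for $\lambda=(2,\ldots,2,0,\ldots,0)$ with $|I|\leq n/2$ give $\sum_{i\in I}x_i\leq 2|I|$, which together with the complementary inequalities collapse to $0\leq x_i\leq 2$. For odd $n$, the inequalities with $|I|=(n-1)/2$ read $\sum_{i\in I}x_i\leq n-1$, and one checks using $\sum x_i=n$ and $x_i\leq 2$ that these are automatically satisfied, so again only $0\leq x_i\leq 2$ and $\sum x_i=n$ remain as essential constraints. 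Scaling gives
\[
t\mathcal{P}^n_\lambda=\{\x\in\R^n \mid 0\leq x_i\leq 2t,\ \textstyle\sum_{i=1}^n x_i = tn\}.
\]

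Next I would count the lattice points of $t\mathcal{P}^n_\lambda$, that is, the weak compositions of $tn$ into $n$ parts with each part at most $2t$. Inclusion--exclusion on the upper-bound constraints $x_i\leq 2t$ yields
\[
\ehr_{\mathcal{P}^n_\lambda}(t)=\sum_{i=0}^n (-1)^i\binom{n}{i}\binom{tn+n-1-i(2t+1)}{n-1}=a_{n,t}.
\]
Since $\dim\mathcal{P}^n_\lambda=n-1$, the Ehrhart series has denominator $(1-x)^n$, so
\[
h^\ast(x)=(1-x)^n\sum_{t\geq 0}a_{n,t}x^t=\sum_{j\geq 0}\left(\sum_{k=0}^j (-1)^{j-k}\binom{n}{j-k}a_{n,k}\right)x^j,
\]
after expanding $(1-x)^n=\sum_{s=0}^n(-1)^s\binom{n}{s}x^s$ and collecting the coefficient of $x^j$. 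Reading off the coefficient of $x^j$ yields the claimed formula.

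The main obstacle is the verification of the polytope description in the odd case, where the Rado inequality indexed by $|I|=(n-1)/2$ is not obviously implied by the box constraints and must be shown redundant using both $\sum x_i=n$ and $x_i\leq 2$ simultaneously. Once that reduction is in place, the remainder is a routine inclusion--exclusion count and formal power series manipulation.
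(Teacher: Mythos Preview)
Your proposal is correct and follows the same route as the paper: describe $\mathcal{P}^n_\lambda$ in both parities as the slice $\{0\le x_i\le 2,\ \sum_i x_i=n\}$, count lattice points of the $k$-th dilate by inclusion--exclusion on the upper bounds to obtain $a_{n,k}$, and multiply the Ehrhart series by $(1-x)^n$. Your flagged ``obstacle'' in the odd case is not a genuine difficulty, since for $|I|=(n-1)/2$ the bound $\sum_{i\in I}x_i\le 2|I|=n-1$ already follows from $x_i\le 2$ alone; the paper simply reads the box description off the vertex set (equivalently, from the facet list in Proposition~\ref{refl-list}(c)) and then asserts the odd case works the same way.
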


\begin{proof}
We proceed in the same way as Proposition~\ref{prop:(n)_h*}. Let $n$ be even and $\lambda = (2,\ldots, 2)\vdash n$. Define $\mathcal{P} = \mathcal{P}^n_\lambda$. We first calculate the Ehrhart polynomial $\ehr_\mathcal{P}(k)$.
The vertices of $\mathcal{P}$ are $2\sum_{i \in I} e_i$ for all $I \subseteq [n]$ such that $|I| = n/2.$ Thus $k\mathcal{P}$ has vertices $2k\sum_{i \in I} e_i$ for these same $I$. Therefore the lattice points in $k\mathcal{P}$ are the points $(v_1,\dots,v_n)$ such that $0 \le v_i \le 2k$ and $\sum_i v_i = kn$. There are in natural bijection with the weak compositions of $kn$ into $n$ parts such that each part has size less than or equal to $2k$. This number is counted by $a_{n,k}$ above. Therefore $\ehr_\mathcal{P}(k) = a_{n,k}$.

Thus the Ehrhart series  of $\mathcal{P}$ is 
$$\sum_{k=0}^\infty \ehr_\mathcal{P}(k)x^k = \sum_{k=0}^\infty a_{n,k} x^k.$$
To get the $h^\ast$-polynomial, we multiply the Ehrhart series by $(1-x)^n$:
\begin{eqnarray*}
(1-x)^n \sum_{k=0}^\infty \ehr_\mathcal{P}(k)x^k &=& 
\left(\sum_{i=0}^n (-1)^i{n\choose i}x^i\right)
\left(\sum_{k=0}^\infty a_{n,k} x^k\right)
\end{eqnarray*}

So for $0\le j\le n-1$, the coefficient of $x^j$ in the above series is
$$
h^\ast_j = \sum_{k=0}^j (-1)^{j-k} {n \choose j-k} a_{n,k}.
$$

Now suppose that $n$ is odd and $\lambda = (2,\ldots, 2,1)\vdash n$. Again define $\mathcal{P} = \mathcal{P}^n_\lambda$. The vertices of $\mathcal{P}$ are $e_j + 2\sum_{i \in I}e_i$ for all $j \notin I \subseteq [n]$ with $|I|= \lfloor n/2 \rfloor.$ Thus $k\mathcal{P}$ has vertices $k(e_j + 2\sum_{i \in I}e_i)$ for these same $I$ and $j$. Therefore the lattice points in $k\mathcal{P}$ are the points $(v_1,\dots,v_n)$ such that $0 \le v_i \le 2k$ and $\sum_i v_i = kn$. Thus the $h^\ast$-polynomial calculation in the even case works precisely the same way in the odd case.
\end{proof}

Now that we have determined the $h^\ast$-polynomials for these reflexive polytopes, we verify that Conjecture~\ref{conj-unimodal} holds in the case of Newton polytopes of Schur polynomials. That is, we show that the coefficients of the $h^\ast$-polynomials of the polytopes listed in Theorem~\ref{thm:reflexive} are unimodal.

\begin{proposition}\label{prop:HibiOhsugi_Schur}
All Newton polytopes arising from Schur polynomials that are reflexive 
have $h^\ast$-polynomials with unimodal coefficients.
\end{proposition}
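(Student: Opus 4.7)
The plan is to treat the five families of reflexive partitions in Theorem~\ref{thm:reflexive} in turn, using the closed-form $h^\ast$-expressions from Propositions~\ref{prop:(n)_h*}, \ref{prop:(2,1,...,1)_h*}, and \ref{prop:(2,...,2)_h*}.

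First, I would show that the fifth family $\lambda = (m,\ldots,m,0) \vdash m(m-1)$ reduces to the first. The involution $\x \mapsto (m,\ldots,m) - \x$ is a lattice-preserving affine automorphism of $\R^m$ interchanging the $S_m$-orbits of $(m,0,\ldots,0)$ and $(m,\ldots,m,0)$, so $\mathcal{P}^m_{(m,\ldots,m,0)}$ is lattice-equivalent to the dilated simplex $\mathcal{P}^m_{(m,0,\ldots,0)} = m\Delta^{m-1}$ and has the same $h^\ast$-polynomial. The case $\lambda = (2,1,\ldots,1,0) \vdash n$ is immediate from Proposition~\ref{prop:(2,1,...,1)_h*}: the identity $h^\ast_j = \binom{n-1}{j}^2$ shows that the $h^\ast$-sequence is the entrywise square of the non-negative unimodal binomial-coefficient sequence $\binom{n-1}{j}$, and is therefore unimodal.

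For the remaining two families, $\lambda = (n)$ and the $(2,\ldots,2,0,\ldots,0) \vdash n$ case (for even $n$) or $(2,\ldots,2,1,0,\ldots,0) \vdash n$ case (for odd $n$), I would invoke Theorem~\ref{thm-GorensteinTriangulation}. Each such polytope is reflexive and therefore Gorenstein, so it suffices to exhibit a regular unimodular triangulation. Dilated simplices $n\Delta^{n-1}$ are compressed and admit such triangulations, while the hypersimplex-like cases can be handled using known regular unimodular triangulations of (small dilates of) hypersimplices. Palindromy of the $h^\ast$-polynomial (from reflexivity) cuts the verification in half: one need only check $h^\ast_j \le h^\ast_{j+1}$ for $j < \lfloor (n-1)/2 \rfloor$.

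The main obstacle is locating or constructing the regular unimodular triangulation in the hypersimplex-like case; should no clean citation suffice, the backup is a direct attack on the alternating-sum formulas, either combinatorially via injections among the weak-composition sets counted by the coefficients $a_{n,k}$ in Proposition~\ref{prop:(2,...,2)_h*}, or analytically by establishing real-rootedness of the $h^\ast$-generating polynomial (which implies log-concavity, and hence unimodality, for a non-negative sequence).
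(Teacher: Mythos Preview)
Your plan is essentially the paper's approach: invoke Theorem~\ref{thm-GorensteinTriangulation} by exhibiting regular unimodular triangulations, except for $\lambda = (2,1,\ldots,1,0)$, which is handled directly from $h^\ast_j = \binom{n-1}{j}^2$. Your explicit reduction of $(m,\ldots,m,0)$ to $(m,0,\ldots,0)$ via the involution $\x \mapsto (m,\ldots,m) - \x$ is in fact more careful than the paper's own proof, which does not address this fifth family explicitly.

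Two corrections are in order. First, $n\Delta^{n-1}$ is \emph{not} compressed for $n\ge 2$: it has lattice width $n$ with respect to each facet normal, so that justification fails. The paper instead cites \cite[Theorem~4.8]{Haase_etal}, which says that any dilate of a polytope admitting a regular unimodular triangulation again admits one; since the standard simplex does, so does $n\Delta^{n-1}$. Second, the obstacle you correctly flag---the odd-$n$ case $\lambda = (2,\ldots,2,1,0,\ldots,0)$---is indeed not a dilate of a hypersimplex (the coordinate sum $m$ is odd, so halving does not land in a hypersimplex), and the paper resolves it without recourse to your backup plans. By Proposition~\ref{refl-list}(c) the facets are cut out by $x_i = 0$ and $x_i = 2$; after translating the interior point to the origin, the facet-defining matrix has rows $\pm e_i$ and is unimodular, so \cite[Theorem~2.4]{Haase_etal} furnishes a regular unimodular triangulation directly. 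This is far simpler than manipulating the alternating-sum formulas of Proposition~\ref{prop:(2,...,2)_h*} or attempting real-rootedness.
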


\begin{proof}

We note that if $\mathcal{P}$ is reflexive and $\mathcal{P}$ has a regular unimodular triangulation, then the $h^\ast$-polynomial of $\mathcal{P}$ has unimodal coefficients by Theorem~\ref{thm-GorensteinTriangulation}. Thus, we can prove this result by showing that these polytopes have regular unimodular triangulations. 

If $\mathcal{P}$ is the polytope from Proposition~\ref{prop:(n)_h*}, then $\mathcal{P}$ is a dilate of a unimodular simplex. By \cite[Theorem 4.8]{Haase_etal}, the dilate of any polytope with a regular unimodular triangulation has a regular unimodular triangulation.

If $\mathcal{P}$ is the polytope from Proposition~\ref{prop:(2,1,...,1)_h*}, we observe that the coefficients are log-concave by a routine calculation since binomial coefficients are log-concave.

If $\mathcal{P}$ is the polytope from Proposition~\ref{prop:(2,...,2)_h*} when $n$ is even, then $\mathcal{P}$ is a dilate of a hypersimplex, which is known to have a unimodular triangulation by \cite{LamPostnikov}. 

If $\mathcal{P}$ is the polytope from Proposition~\ref{prop:(2,...,2)_h*} when $n$ is odd, we note that the facets for this polytope are listed in Proposition~\ref{refl-list}(c). We can  translate $\mathcal{P}$ so that the origin is the unique interior point of $\mathcal{P}$. The facet-defining matrix for this translate of $\mathcal{P}$ is easily seen to be unimodular, so by \cite[Theorem 2.4]{Haase_etal}, $\mathcal{P}$ has a regular unimodular triangulation. Thus the conjecture holds for this family of polytopes.
\end{proof}

\begin{remark}
One could also consider the question of $h^\ast$-unimodality for Gorenstein $\mathcal{P}_\lambda^m$, but  these results follow rather quickly.
Several of the Gorenstein examples are hypersimplices (Corollary~\ref{cor:hypersimplices}) which are known to have combinatorial formulas \cite{Kim-Hypersimplices} and are known to have regular unimodular triangulations \cite{LamPostnikov}. 
The remaining examples of Gorenstein $\mathcal{P}_\lambda^m$ are dilated standard simplices and arguments will follow in a similar vein to the case of $\lambda=(n)$.
\end{remark}

\section{Conclusion}
In this paper we study the Newton polytopes of Schur polynomials and show that they all have the integer decomposition property.
We determine which Schur polynomials have reflexive Newton polytopes,
and for which the Newton polytope is Gorenstein.
For the reflexive Newton polytopes of Schur polynomials,
we give the $h^\ast$-polynomials, and show that they are unimodal.

We also introduce a generalization of symmetric Grothendieck polynomials, called inflated symmetric Grothendieck polynomials.
We show that all these polynomials have saturated Newton polytope and their Newton polytopes have the integer decomposition property.
We characterize the partitions whose inflated symmetric Grothendieck polynomials have reflexive Newton polytopes and provide a table of their $h^\ast$-vectors.

The study of polynomials with saturated Newton polytope both introduces additional lattice polytopes of combinatorial interest and provides a new tool for approaching problems in Ehrhart theory. 
It may be fruitful to consider questions of the integer decomposition and reflexive polytopes for the Newton polytopes of other families of polynomials which are known to have SNP, such as chromatic symmetric polynomials and Schubert polynomials. 
Additionally, it is worth noting that the Newton polytopes of many polynomials of interest in 
algebraic combinatorics appear not only to have SNP, but also to have nice Ehrhart theoretic properties such as IDP and $h^\ast$-unimodality, either by our theorems or computationally. 
Perhaps it would be of interest to investigate if these Newton polytopes have other sought after properties, such as Ehrhart positivity. 


\appendix
\section{Tables of $h^\ast$-vectors for reflexive polytopes}

We present some tables of $h^\ast$-vectors for reflexive Newton polytopes arising from inflated symmetric Grothendieck polynomials $G_{h,\lambda}(\x)$ and Schur polynomials $s_\lambda(\x)$.

\begin{center}
	\begin{table}[h]
\small
	   \begin{tabular}{ | c | c | l |}
 	   \hline
$m$ & $h$ & $h^\ast$-vector\\ \hline
3 &	$\geq 2$ & 1, 31, 31, 1\\
4 &	$\geq 2$ & 1, 121, 381, 121, 1\\
5 &	$\geq 2$ & 1, 456, 3431, 3431, 456, 1\\
6 &	$\geq 2$ & 1, 1709, 26769, 60691, 26769, 1709, 1\\
7 &	$\geq 2$ & 1, 6427, 193705, 848443, 848443, 193705, 6427, 1\\
8 &	$\geq 2$ & 1, 24301, 1343521, 10350421, 19610233, 10350421, 1343521, 24301, 1\\ \hline
 	   \end{tabular}
       \caption{\label{tab:h*-vecs3} $h^\ast$-vectors of some reflexive Newton polytopes arising from $G_{h,\lambda}$ for 
${\lambda=((m+1)^{m-1},0)}$. For $m\geq 3$ and $h\geq 2$, $h(m-1) \geq 2(m-1) \geq (m+1)$. Thus, $\Newt(G_{2,\lambda}(\x)) = \Newt(G_{h,\lambda}(\x))$ for all $h\geq 2$. 
We should note that these polytopes are equivalent to the $\lambda$-permutatohedra for $\lambda=(n)$.
Hence, these are the same coefficients given by Proposition~\ref{prop:(n)_h*} and by OEIS A108267 \cite{oeis}
       }
	\end{table}
\end{center}

\begin{center}
	\begin{table}[h]
\small
	   \begin{tabular}{ | c | c | l |}
 	   \hline
$m$ & $h$ & $h^\ast$-vector\\ \hline
3 & 1&	1, 12, 12, 1\\
3 &	$\geq 2$ &	1, 19, 19, 1\\
4 &	$\geq 1$ & 1, 61, 183, 61, 1\\
5 &	$\geq 1$ & 1, 186, 1301, 1301, 186, 1\\
6 &	$\geq 1$ & 1, 554, 7974, 17756, 7974, 554, 1\\
7 &	$\geq 1$ & 1, 1639, 44997, 191955, 191955, 44997, 1639, 1\\
8 &	$\geq 1$ & 1, 4841, 241861, 1805773, 3393515, 1805773, 241861, 4841, 1\\
9 &	$\geq 1$ & 1, 14308, 1261900, 15539722, 49625029, 49625029, 15539722, 1261900, 14308, 1\\ \hline
 	   \end{tabular}
       \caption{\label{tab:h*-vecs2} $h^\ast$-vectors of some reflexive Newton polytopes arising from $G_{h,\lambda}$ for $\lambda=(2^\ell,0^{\ell+1})$ or $(2^{\ell-1},1,0^\ell)$. Since $\lambda_1=2$ in these cases, then for $m=3$ and all $h\geq2$, and for $m\geq4$ and all $h\geq1$, the Newton polytope $\Newt(G_{h,\lambda})$ stabilizes and is the truncation of the $m$-cube $[0,2]^m$ by the hyperplane $x_1+\cdots+x_m=m-1$.       
       }
	\end{table}
\end{center}

\begin{center}
	\begin{table}
	   \begin{tabular}{ | c | c | l p{0cm} |}
 	   \hline
	    $\lambda$ & n & $h^\ast$-vector &\\ \hline
		(21)	&	3&	1, 4, 1&\\
		(3)	&	3&	1, 7, 1&\\
		(211)	&	4&	1, 9, 9, 1&\\
		(22)	&	4&	1, 15, 15, 1&\\
		(4)	&	4&	1, 31, 31, 1&\\
		(2111)	&	5&	1, 16, 36, 16, 1&\\
		(221)	&	5&	1, 46, 136, 46, 1&\\
		(5)	&	5&	1, 121, 381, 121, 1&\\
		(21111)&	6&	1, 25, 100, 100, 25,1 &\\
		(222)	&	6&	1, 135, 920, 920, 135, 1&\\
		(6)	&	6&	1, 456, 3431, 3431, 456, 1&\\
		(211111) &	7&	1, 36, 225, 400, 225, 36, 1&\\
		(2221)	  &	7&	1, 386, 5405, 11964, 5405, 386, 1&\\
		(2111111) &	8&	1, 49, 441, 1225, 1225, 441, 49, 1&\\
		(2222)	   &	8&	1, 1099, 29337, 124187, 124187, 29337, 1099, 1&\\
		(22221)  &	9&	1, 3130, 152110, 1126258, 2112016, 1126258, 152110, 3130, 1&\\ \hline
 	   \end{tabular}
       \caption{\label{tab:h*-vecs} $h^\ast$-vectors of some reflexive Newton polytopes arising from Schur polynomials $s_\lambda$.}
	\end{table}
\end{center}

\pagebreak


\end{document}